\theoremstyle{definition}
\newtheorem{definition}{\textbf{Définition}}[section]
\newtheorem{remarque}[definition]{\textbf{Remarque}}
\theoremstyle{plain}
\newtheorem{theoreme}[definition]{\textbf{Théorème}}
\newtheorem{prop}[definition]{\textbf{Proposition}}
\newtheorem{corollaire}[definition]{\textbf{Corollaire}}
\newtheorem{lemme}[definition]{\textit{Lemme}}
\DeclareMathOperator{\e}{e}
\DeclareMathOperator{\li}{li}
\newcommand{\bfx}{\boldsymbol{x}}
\newcommand{\bfy}{\boldsymbol{y}}
\newcommand{\bfu}{\boldsymbol{u}}
\title{\Huge{\bf{Sur le nombre d'idéaux dont la norme  est la valeur d'une forme binaire de degré $3$}}}
\author{Alexandre Lartaux}
\begin{document}

\maketitle

\vglue0.3cm
\hglue0.02\linewidth\begin{minipage}{0.9\linewidth}
\begin{center}
{Universit\'e de Paris, Sorbonne Université}\\
CNRS,  \\
 Institut de~Math\'ematiques~de Jussieu- Paris Rive Gauche,\\
F-75013 Paris, France \\
 E-mail : \parbox[t]{0.45\linewidth}{\texttt{alexandre.lartaux@imj-prg.fr}}

\end{center}
\end{minipage}

\renewcommand{\abstractname}{Abstract}
\begin{abstract}
Let $\mathbb{K}$ be a cyclic extension of degree $3$ of $\mathbb{Q}$. Take $G={\rm Gal}(\mathbb{K}/ \mathbb{Q})$ and $\chi$ the character of a non trivial representation of $G$. In this case, $\chi$ is a non principal Dirichlet character of degree $3$ and the quantity $r_3(n)$ defined by
$$r_3(n):=\big(1*\chi*\chi^2\big)(n){\rm ,}$$
counts the number of ideals of $O_{\mathbb{K}}$ of norm $n$. In this paper, using a new result on Hooley's Delta function from \cite{L}, we prove an asymptotic estimate, in $\xi$, of the quantity
$$Q(\xi,\mathcal{R},F):=\sum\limits_{\bfx \in \mathcal{R}(\xi)}{r_3\big(F(\bfx)\big)}{\rm ,}$$
for a binary form $F$ of degree $3$ irreducible over $\mathbb{K}$ and $\mathcal{R}$ a good domain of $\mathbb{R}^2$, with
$$\mathcal{R}(\xi):=\Big\{\bfx \in \mathbb{R}^2\;:\: \frac{\bfx}{\xi} \in \mathcal{R}\Big\}{\rm .}$$
We also give a geometric interpretation of the main constant of the asymptotic estimate when the ring $O_{\mathbb{K}}$ is principal.
\end{abstract}

\tableofcontents

\section{Introduction et résultats}
Lorsque $\mathcal{R}$ désigne un domaine de $\mathbb{R}^2$ et $\xi \in \mathbb{R}_+^*$, nous désignons par $\mathcal{R}(\xi)$ le domaine de $\mathbb{R}^2$ obtenu en dilatant $\mathcal{R}$ par $\xi$, c'est-à-dire
$$\mathcal{R}(\xi):=\{\bfx \in \mathbb{R}^2 : \bfx/\xi \in \mathcal{R}\}{\rm .}$$
 Soit $\mathbb{K}$ une extension cyclique de $\mathbb{Q}$ de degré $3$, soit $G=\rm{Gal}(\mathbb{K}/\mathbb{Q})$ son groupe de Galois et $\chi$  un caractère non trivial de $G$. L'application $\chi$ est donc un caractère de Dirichlet non trivial d'ordre $3$ et nous notons par la suite $q$ son conducteur. \\
Rappelons que pour $f$ et $g$ deux fonctions arithmétique, le produit de convolution $f*g$ est la fonction arithmétique définie par
$$(f*g)(n)=\sum\limits_{d\mid n}{f(d)g\Big(\frac{n}{d}\Big)} \;\;\;\;\; \forall n\geqslant 1{\rm .}$$
Nous définissons pour la suite la fonction arithmétique $r_3$ par 
\begin{equation}
\label{r}
r_3(n):=(1*\chi*\chi^2)(n){\rm ,}
\end{equation}
où la dépendance en $\chi$ est omise. La quantité $r_3(n)$ compte le nombre d'idéaux de l'anneau des entiers du corps de nombre $\mathbb{K}$ dont la norme vaut $n$. Cela provient du théorème $6$ de \cite{H}.\\
 
Nous désignons par $F \in \mathbb{Z}[X,Y]$ une forme binaire, homogène de degré~$3$. Nous proposons dans cet article une estimation asymptotique en $\xi$ de la quantité $Q(F,\mathcal{R},\xi)$ définie de la manière suivante

\begin{equation}
\label{eq Q}
Q(F,\xi,\mathcal{R}):=\sum\limits_{\bfx \in \mathbb{Z}^2\cap \mathcal{R}(\xi)}{r_3(F(\bfx))}
\end{equation}
lorsque le domaine $\mathcal{R}$ et la forme binaire $F$ vérifient les hypothèses suivantes :
\begin{align*}
\text{(H1)} & &&\text{Le domaine } \mathcal{R} \text{ est un ouvert borné convexe dont } \\
& &&\text{la frontière est continuement différentiable{\rm ;}} \\
\text{(H2)} & &&\forall \bfx \in \mathcal{R}\mbox{, } ||\bfx||\leqslant \sigma {\rm ;} \\
\text{(H3)} & && \forall \bfx \in \mathcal{R}\mbox{, } |F(\bfx)|\leqslant \vartheta^3 {\rm ;}\\
\text{(H4)} & && \text{La forme } F \text{ est irréductible sur } \mathbb{K} {\rm ,}
\end{align*}
pour certaines valeurs $\sigma>0$ et $\vartheta>0$. Cela peut être vu comme une version analogue de l'étude de \cite{B-T} qui étudie la somme de $(1*\chi)(T(\bfx))$ lorsque $\chi$ est le caractère non principal modulo $4$ et $T$ une forme binaire quartique.\\
Nous introduisons les ensembles 
\begin{equation}
\label{Epsilon}
\mathcal{E}:=\bigcup\limits_{\alpha \in G_1}\{n \in \mathbb{N}^*\mbox{ : } \exists d\mid q^{\infty}\mbox{, } n\equiv \alpha d \bmod dq\}
\end{equation}
où 
$$G_1:={\rm Ker}(\chi)\subset (\mathbb{Z}/q\mathbb{Z})^{\times}$$ 
et pour $d \mid q^{\infty}$, nous notons 
$\mathcal{E}_d$ la projection de $\mathcal{E}$ sur $\mathbb{Z}/d\mathbb{Z}$, c'est-à-dire
\begin{equation}
\label{Epsilond}
\mathcal{E}_d=\bigcup\limits_{\alpha \in G_1}\{n \in \mathbb{Z}/d\mathbb{Z}\mbox{ : } \exists d_1\mid q^{\infty}\mbox{, } n\equiv \alpha d \bmod (d_1q,d)\}{\rm .}
\end{equation}
Enfin, nous définissons l'exposant
\begin{equation}
\label{eta}
\eta:=0,0034{\rm .}
\end{equation}
Rappelons que pour $\{s\;:\;\mathfrak{Re}(s)>0\}$ et $\chi$ un caractère de Dirichlet non principal, la fonction $L$ de Dirichlet est définie par
$$L(s,\chi)=\sum\limits_{n\geqslant 1}{\frac{\chi(n)}{n^s}}{\rm .}$$
\begin{theoreme}
\label{theo 1}
Soient  $\xi>0$, $\mathbb{K}$ une extension cyclique de $\mathbb{Q}$ de degré $3$, $\chi$ un caractère non principal de $G$, $F \in \mathbb{Z}[X,Y]$ une forme binaire de degré $3$,\\
\noindent
 $\mathcal{R}$ un domaine de $\mathbb{R}^2$. Pour tout $\varepsilon>0$, $\vartheta>0$ et $\sigma>0$ tels que les hypothèses (H1), (H2), (H3) et (H4) sont vérifiées et sous les conditions
$$1/\sqrt{\xi}\leqslant \sigma \leqslant \xi^{3/2}{\rm ,}\;\;\;\;\; 1/\sqrt{\xi}\leqslant \vartheta \leqslant \xi^{3/2}{\rm ,}$$
nous avons
\begin{equation}
\label{1}
Q(F,\xi,\mathcal{R})=K(F)L(1,\chi)L(1,\chi^2){\rm vol}(\mathcal{R})\xi^2+O\Big(\frac{||F||^{\varepsilon}(\sigma^2+\vartheta^2)\xi^2}{(\log \xi)^{\eta}}\Big)
\end{equation}
où $||F||$ désigne le maximum des coefficients de $F$ et
\begin{equation}
K(F):=K_q(F)\prod\limits_{p\nmid q}{K_p(F)}
\end{equation}
avec 
\begin{equation}
\label{K_p}
K_p(F):=\Big(1-\frac{\chi(p)}{p}\Big)\Big(1-\frac{\chi^2(p)}{p}\Big)\sum\limits_{\nu\geqslant 0}{\frac{\varrho_F^+(p^{\nu})}{p^{2\nu}}(\chi*\chi^2)(p^{\nu})}
\end{equation}
pour tout $p$ premier avec $q$ et 
\begin{equation}
\label{K_q}
K_q(F)=\lim\limits_{k\rightarrow \infty}\frac{3}{q^{2k}}\Big|\Big\{\bfx \in (\mathbb{Z}/q^k\mathbb{Z})^2\mbox{ : } F(\bfx) \in \mathcal{E}_{q^k}\Big\}\Big|{\rm ,}
\end{equation}
où $\mathcal{E}_{q^k}$ est défini en \eqref{Epsilond}.
\end{theoreme}
\begin{remarque}
Si de plus l'anneau des entiers $O_{\mathbb{K}}$ est principal, nous pouvons décomposer $K(F)$ en produit de facteurs non archimédiens qui possèdent chacun une interprétation géométrique. Nous étudierons ce cas dans la section $5$.
\end{remarque}
Ce résultat repose sur la méthode utilisée dans \cite{B} et nécessite le Théorème~\ref{theo L}. Cependant, plusieurs difficultés techniques apparaissent, d'une part, le conducteur $q$ de $\chi$ ne possède pas forcément un unique facteur premier, ce qui complique la paramétrisation de la section 3.2, d'autre part, l'exposant $\rho$ apparaissant dans le Théorème \ref{theo L} n'est pas suffisamment précis pour adapter directement les calculs menés dans \cite{B}. \\

Nous trouverons dans l'appendice B.$4$ de \cite{C} une liste de corps $\mathbb{K}$ vérifiant les hypothèses du Théorème \ref{theo 1}. On peut ainsi choisir $\mathbb{K}=\mathbb{Q}[\alpha]$ où $\alpha$ est une racine d'un des polynômes suivants : $X^3+X^2-2X-1$, $X^3-3X-1$, $X^3-X^2-4X-1$.

\section{Rappels}
\subsection{\'{E}tude du nombre de solutions locales d'une équation polynomiale}
Nous rappelons dans cette section les résultats préliminaires et les notations nécessaires à la démonstration du Théorème \ref{theo 1}. Lorsque $F \in \mathbb{Z}[X,Y]$ est une forme binaire homogène de degré $3$, nous notons $\mathcal{D}(F):={\rm disc}(F)$ son discriminant 
$$\mathcal{D}(F(X,Y))=\mathcal{D}(F(X,1))=\mathcal{D}(F(1,Y)) {\rm .}$$
Lorsque $s\in \mathbb{N}^*$, nous posons
$$\varrho_F^-(s):=\sum\limits_{\substack{1\leqslant a\leqslant s \\ F(a,1)\equiv 0\bmod s}}{1}{\rm ,}\;\;\;\;\;\;\;\;\;\;\;\;\;\;\;\;\;\;\;\; \varrho_F^+(s):=\sum\limits_{\substack{1\leqslant a,b\leqslant s \\ F(a,b)\equiv 0\bmod s}}{1}{\rm .}$$
Nous introduisons ensuite les ensembles
\begin{equation}
\label{lambda}
\Lambda(s,F):=\{(m,n)\in \mathbb{Z}^2 \,:\, s\mid F(m,n)\}
\end{equation}
et
\begin{equation}
\label{lambda*}
\Lambda^*(s,F):=\{(m,n)\in \Lambda(s,F) \,:\, (m,n,s)=1\}
\end{equation}
et nous posons
$$\varrho_F^*(s):=|\Lambda^*(s,F)\cap[0,s[^2|{\rm .}$$
Lorsque le polynôme $F$ est irréductible sur $\mathbb{Q}$, nous pouvons considérer $k$ un corps de rupture du polynôme $F(X,1)$ et la fonction zêta de Dedekind, $\zeta_k$ définie sur le demi-plan $\{s \in \mathbb{C}\mbox{ : }\Re e(s)>1\}$. Les résultats suivants sont établis dans \cite{B}.
\begin{prop}
Soient $F \in \mathbb{Z}[X,Y]$ une forme binaire de degré $3$ et $\mathbb{K}$ une extension cyclique de $\mathbb{Q}$ de degré $3$ tels que $F$ soit irréductible sur $\mathbb{K}$. Il existe des fonctions multiplicatives $h_F^-$, $h_F^+$ et $h_F^*$ telles que pour tout $\Re e(s)>1$, nous avons
\begin{align}
\sum\limits_{n\geqslant 1}{\frac{\varrho_F^-(n)}{n^s}}=\zeta_k(s)\sum\limits_{n\geqslant 1}{\frac{h_F^-(n)}{n^s}} \\
\sum\limits_{n\geqslant 1}{\frac{\varrho_F^+(n)}{n^{s+1}}}=\zeta_k(s)\sum\limits_{n\geqslant 1}{\frac{h_F^+(n)}{n^s}} \\
\sum\limits_{n\geqslant 1}{\frac{\varrho_F^*(n)}{n^{s+1}}}=\zeta_k(s)\sum\limits_{n\geqslant 1}{\frac{h_F^*(n)}{n^s}}{\rm .}
\end{align}
De plus, ces fonctions multiplicatives vérifient, pour tout $\kappa \in ]0,1/6[$ et pour tout $\varepsilon>0$
\begin{equation}
\sum\limits_{n\geqslant 1}{\frac{|h_F^-(n)|+|h_F^+(n)|+|h_F^*(n)|}{n^{1-\kappa}}}\ll ||F||^{\varepsilon}{\rm .}
\end{equation}
\end{prop}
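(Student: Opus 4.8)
The plan is to reduce everything to a prime-by-prime analysis and then to control the resulting correction factors. Since $F(\bfx)\bmod s$ depends only on $\bfx\bmod s$, the Chinese Remainder Theorem shows that $\varrho_F^-,\varrho_F^+,\varrho_F^*$ are multiplicative, so each of the three Dirichlet series is an Euler product, as is $\zeta_k(s)=\prod_p\prod_{\mathfrak p\mid p}(1-p^{-f_{\mathfrak p}s})^{-1}$, where $f_{\mathfrak p}$ denotes the residue degree. I would then \emph{define} $h_F^-,h_F^+,h_F^*$ to be the multiplicative functions whose local factor at $p$ is the corresponding $\varrho$-Euler factor divided by $\zeta_{k,p}(s)$. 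Because $\zeta_{k,p}(s)^{-1}=\prod_{\mathfrak p\mid p}(1-p^{-f_{\mathfrak p}s})$ is a polynomial in $p^{-s}$ of degree $\le 3$ with bounded integer coefficients, each $h_F^\bullet(p^\nu)$ is automatically a bounded $\mathbb Z$-combination of the coefficients $\varrho_F^\bullet(p^j)$ with $j\le\nu$; the three identities then hold by construction, and only the bound on $\sum_n|h_F^\bullet(n)|n^{-(1-\kappa)}$ remains.

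At a \emph{good} prime, i.e. $p\nmid a_0\mathcal D(F)$ with $a_0$ the leading coefficient of $f(X)=F(X,1)$, Dedekind's theorem makes the factorisation of $f\bmod p$ mirror the splitting $pO_k=\prod_i\mathfrak p_i$ (all unramified), and Hensel's lemma lifts each simple root uniquely to every level. Hence each residue-degree-$1$ prime yields exactly one root modulo $p^\nu$ at every $\nu$ while the higher-degree primes yield none, and a short computation gives the local ratio $\prod_{f_{\mathfrak p_i}\ge 2}(1-p^{-f_{\mathfrak p_i}s})$; in particular $h_F^-(p)=0$ and $h_F^-(p^\nu)=O(1)$, supported on $\nu\in\{2,3\}$. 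The homogeneous counts follow the same pattern once one observes that $\varrho_F^*(p^\nu)=\varphi(p^\nu)\,\varrho_F^-(p^\nu)$ at good primes (rescale a primitive solution so that its second coordinate is a unit, using $p\nmid a_0$), and that $\varrho_F^+$ is recovered from $\varrho_F^*$ by summing over the power of $p$ dividing $\gcd(a,b)$ via $F(p^j a',p^j b')=p^{3j}F(a',b')$; these two relations also account for the normalisations $n^{-s}$ against $n^{-(s+1)}$. Thus at good primes $h_F^\bullet(p)=0$ and $h_F^\bullet(p^\nu)=O(1)$, so their total contribution is $\prod_p\bigl(1+O(p^{-2(1-\kappa)})\bigr)=O(1)$.

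The crux is the \emph{bad} primes $p\mid a_0\mathcal D(F)$, where no exact computation is available and one must bound $\varrho_F^\bullet(p^\nu)$ uniformly in $F$ from the geometry of the zeros of $F$ modulo $p^\nu$. The worst case is a triple zero: there a root spreads over a residue interval of width $p^{\lceil\nu/3\rceil}$, giving $\varrho_F^-(p^\nu)\ll p^{2\nu/3}$ and $\varrho_F^*(p^\nu)\ll p^{5\nu/3}$, an excess of $p^{2\nu/3}$ over the generic size $p^\nu$. Since $\zeta_{k,p}(s)^{-1}$ has bounded coefficients this forces $|h_F^\bullet(p^\nu)|\ll p^{2\nu/3}$, whence
\[\sum_{\nu\ge 1}\frac{|h_F^\bullet(p^\nu)|}{p^{\nu(1-\kappa)}}\ll\sum_{\nu\ge 1}p^{-\nu(1/3-\kappa)}\ll p^{-(1/3-\kappa)},\]
which converges, and is small for large $p$, precisely when $\kappa<1/3$; the stated range $\kappa<1/6$ lies safely inside, leaving room for the uniformity in $F$.

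It remains to assemble the Euler product. The good-prime factors multiply to $O(1)$, and each bad-prime factor is bounded by an absolute constant, tending to $1+O(p^{-(1/3-\kappa)})$ for large $p$. Since $\mathcal D(F)\ll\|F\|^4$ and $a_0\le\|F\|$, the bad primes all divide an integer of size $\ll\|F\|^4$, so their number is $\ll\log\|F\|/\log\log\|F\|$ and the standard bound $C^{\omega(N)}\ll_\varepsilon N^\varepsilon$ gives a bad-prime product $\ll\|F\|^\varepsilon$. Multiplying the two contributions yields $\sum_n|h_F^\bullet(n)|n^{-(1-\kappa)}\ll\|F\|^\varepsilon$, as claimed. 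I expect the genuine obstacle to be the uniform estimate of $\varrho_F^*(p^\nu)$ for \emph{all} $\nu$ at the ramified primes, with a power saving independent of $F$: this cubic-form input is what fixes the admissible range of $\kappa$, and it is presumably the place where the precise value quoted from \cite{B} must be extracted.
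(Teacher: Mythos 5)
The paper offers no proof of this proposition to compare against: it is quoted from \cite{B} (the text says the results are established there), so your attempt must be judged on its own and against that reference. Your reconstruction follows exactly the standard route of \cite{B}: multiplicativity of $\varrho_F^{-}$, $\varrho_F^{+}$, $\varrho_F^{*}$ by the Chinese remainder theorem; definition of $h_F^{-}$, $h_F^{+}$, $h_F^{*}$ as the local quotients by the Euler factor of $\zeta_k$, which makes the three identities tautological; an explicit computation at the primes $p\nmid a_0\mathcal{D}(F)$ via Hensel lifting, the identity $\varrho_F^{*}(p^{\nu})=\varphi(p^{\nu})\varrho_F^{-}(p^{\nu})$ and the $\gcd$-decomposition relating $\varrho_F^{+}$ to $\varrho_F^{*}$ (your justification that primitive solutions with $p\mid b$ cannot occur when $p\nmid a_0$ is correct); and a $C^{\omega(a_0\mathcal{D}(F))}\ll\|F\|^{\varepsilon}$ bound for the product over the remaining primes. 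Structurally this is sound. The one step you assert rather than prove is the uniform bound $\varrho_F^{-}(p^{\nu})\ll p^{2\nu/3}$, $\varrho_F^{*}(p^{\nu})\ll p^{5\nu/3}$ at the bad primes with an implied constant independent of $F$; you correctly flag this as the substantive external input, and it is indeed where the work lies in \cite{B}. Be aware, however, that your exponent $2\nu/3$ would give the conclusion for every $\kappa<1/3$, whereas the proposition is stated only for $\kappa<1/6$; this discrepancy is a hint that the bound actually available uniformly in $F$ is weaker than your triple-root heuristic suggests (of the shape $p^{\nu(1-1/6)}$ rather than $p^{\nu(1-1/3)}$ for the inhomogeneous count). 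Since any exponent strictly below $\nu(1-\kappa)$ suffices both for convergence of each local factor and for the $\|F\|^{\varepsilon}$ bound on the product, this does not invalidate your argument for the stated range of $\kappa$, but you should not claim the wider range without actually establishing the stronger local estimate.
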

Le comportement moyen de $(\chi*~\chi^2)(n)\varrho_F^{\pm}(n)$ découle des propriétés analytiques des fonctions $L_{k}(s,\chi)$ et $L_{k}(s,\chi^2)$ où nous avons posé
$$L_{k}(s,\chi):=\sum\limits_{\mathcal{U}\in \mathcal{I}(\mathcal{O}_{k})}{\frac{\chi(N_{k/\mathbb{Q}}(\mathcal{U}))}{N_{k/\mathbb{Q}}(\mathcal{U})^s}}{\rm ,}$$
où $\mathcal{I}(\mathcal{O}_k)$ désigne l'ensemble des idéaux de l'anneau des entiers algébriques de $k$. Nous pouvons ainsi énoncer une version analogue de la proposition ci-dessus en prenant en compte les caractères $\chi$ et $\chi^2$.
\begin{prop}
Soient $\mathbb{K}$ une extension cyclique de $\mathbb{Q}$ de degré $3$, $\chi$ un caractère non principal de $Gal(\mathbb{K}/\mathbb{Q})$ et $F \in \mathbb{Z}[X,Y]$ une forme binaire de degré $3$ irréductible sur $\mathbb{K}$. Il existe des fonctions multiplicatives $h_F^-(\;\cdot\mbox{ ; }\chi)$, $h_F^+(\;\cdot \mbox{ ; }\chi)$ et $h_F^*(\;\cdot\mbox{ ; }\chi)$ telles que pour tout $\Re e(s)>1$, nous avons
\begin{align}
\sum\limits_{n\geqslant 1}{\frac{\chi(n)\varrho_F^-(n)}{n^s}}=L_k(s,\chi)\sum\limits_{n\geqslant 1}{\frac{h_F^-(n;\chi)}{n^s}} \\
\sum\limits_{n\geqslant 1}{\frac{\chi(n)\varrho_F^+(n)}{n^{s+1}}}=L_k(s,\chi)\sum\limits_{n\geqslant 1}{\frac{h_F^+(n;\chi)}{n^s}} \\
\sum\limits_{n\geqslant 1}{\frac{\chi(n)\varrho_F^*(n)}{n^{s+1}}}=L_k(s,\chi)\sum\limits_{n\geqslant 1}{\frac{h_F^*(n;\chi)}{n^s}}{\rm .}
\end{align}
De plus, ces fonctions multiplicatives vérifient, pour tout $\kappa \in ]0,1/6[$ et pour tout $\varepsilon>0$
\begin{equation}
\sum\limits_{n\geqslant 1}{\frac{|h_F^-(n;\chi)|+|h_F^+(n;\chi)|+|h_F^*(n;\chi)|}{n^{1-\kappa}}}\ll ||F||^{\varepsilon}{\rm .}
\end{equation}
Les mêmes résultats restent valables si l'on remplace $\chi$ par $\chi^2$.
\end{prop}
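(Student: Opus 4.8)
L'id\'ee est de ramener enti\`erement l'\'enonc\'e \`a la proposition pr\'ec\'edente (cas non tordu), en observant que la torsion par $\chi$ se traduit, au niveau des facteurs eul\'eriens, par la substitution $p^{-s}\mapsto\chi(p)p^{-s}$. Concr\`etement, je propose de poser, pour $\bullet$ d\'esignant l'un des symboles $-$, $+$ ou $*$,
$$h_F^\bullet(n;\chi):=\chi(n)\,h_F^\bullet(n)\qquad(n\geqslant 1),$$
o\`u les $h_F^\bullet$ sont les fonctions multiplicatives fournies par la proposition pr\'ec\'edente. Comme $\chi$ est compl\`etement multiplicative et chaque $h_F^\bullet$ multiplicative, les fonctions $h_F^\bullet(\cdot;\chi)$ sont multiplicatives ; il reste \`a \'etablir les trois factorisations et la majoration de convergence.

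Toutes les fonctions $\varrho_F^-,\varrho_F^+,\varrho_F^*$ \'etant multiplicatives (th\'eor\`eme des restes chinois), toutes les s\'eries en jeu poss\`edent un produit eul\'erien et il suffit de raisonner facteur par facteur. Fixons un premier $p$ et posons $u=p^{-s}$. Le facteur local en $p$ de $\zeta_k(s)$ vaut $\prod_{\mathfrak{p}\mid p}(1-u^{f_{\mathfrak{p}}})^{-1}$, o\`u $f_{\mathfrak{p}}$ est le degr\'e r\'esiduel de $\mathfrak{p}$. Comme $N_{k/\mathbb{Q}}(\mathfrak{p})=p^{f_{\mathfrak{p}}}$ et que $\chi$ est compl\`etement multiplicative, on a $\chi(N_{k/\mathbb{Q}}(\mathfrak{p}))=\chi(p)^{f_{\mathfrak{p}}}$, si bien que le facteur local de $L_k(s,\chi)$ vaut $\prod_{\mathfrak{p}\mid p}(1-\chi(p)^{f_{\mathfrak{p}}}u^{f_{\mathfrak{p}}})^{-1}$ : c'est exactement l'image du facteur local de $\zeta_k(s)$ par la substitution $u\mapsto\chi(p)u$. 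De m\^eme, puisque $\chi(p^\nu)=\chi(p)^\nu$, le facteur local de la s\'erie tordue associ\'ee \`a $\varrho_F^\bullet$ s'obtient \`a partir du facteur local non tordu par cette m\^eme substitution $u\mapsto\chi(p)u$.

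On conclut alors facilement. Pour chaque $\bullet$, la proposition pr\'ec\'edente donne, facteur par facteur, l'\'egalit\'e entre le facteur local de la s\'erie de Dirichlet de $\varrho_F^\bullet$ (avec le d\'ecalage ad\'equat) et le produit du facteur local de $\zeta_k(s)$ par $\sum_\nu h_F^\bullet(p^\nu)u^\nu$. En appliquant la substitution $u\mapsto\chi(p)u$ \`a cette \'egalit\'e, le membre de gauche devient le facteur local de la s\'erie tordue, et le membre de droite le produit du facteur local de $L_k(s,\chi)$ par $\sum_\nu\chi(p)^\nu h_F^\bullet(p^\nu)u^\nu$ ; en recollant sur tous les premiers (les produits eul\'eriens convergeant pour $\Re e(s)>1$), on obtient les trois factorisations voulues avec $h_F^\bullet(n;\chi)=\chi(n)h_F^\bullet(n)$. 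La majoration de convergence en d\'ecoule imm\'ediatement : comme $|\chi(n)|\leqslant 1$, on a $|h_F^\bullet(n;\chi)|\leqslant|h_F^\bullet(n)|$, d'o\`u
$$\sum_{n\geqslant 1}\frac{|h_F^-(n;\chi)|+|h_F^+(n;\chi)|+|h_F^*(n;\chi)|}{n^{1-\kappa}}\leqslant\sum_{n\geqslant 1}\frac{|h_F^-(n)|+|h_F^+(n)|+|h_F^*(n)|}{n^{1-\kappa}}\ll||F||^{\varepsilon}$$
pour tout $\kappa\in\,]0,1/6[$ et tout $\varepsilon>0$. Le cas de $\chi^2$ est identique, $\chi^2$ \'etant lui aussi un caract\`ere de Dirichlet compl\`etement multiplicatif.

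La seule v\'erification d\'elicate est la validit\'e uniforme de la substitution, en particulier aux premiers divisant le conducteur $q$ : si $p\mid q$, alors $\chi(p)=0$, le facteur local de $L_k(s,\chi)$ se r\'eduit \`a $1$ et la s\'erie tordue \`a son terme $\nu=0$, et l'on constate que l'identit\'e $h_F^\bullet(p^\nu;\chi)=\chi(p^\nu)h_F^\bullet(p^\nu)$ reste valable (les deux membres \'etant nuls pour $\nu\geqslant 1$). Pour les premiers ramifi\'es dans $k$ ou divisant $\mathcal{D}(F)$ aucune difficult\'e nouvelle n'appara\^{\i}t, l'identit\'e locale non tordue \'etant d\'ej\`a acquise et la substitution $u\mapsto\chi(p)u$ \'etant purement formelle. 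Le c\oe{}ur de la preuve tient donc \`a cette remarque structurelle : tordre par un caract\`ere compl\`etement multiplicatif revient \`a effectuer $p^{-s}\mapsto\chi(p)p^{-s}$ dans chaque facteur eul\'erien, op\'eration qui pr\'eserve simultan\'ement les factorisations et la taille des fonctions $h_F^\bullet$.
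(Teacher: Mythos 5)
Votre preuve est correcte et suit exactement la voie que l'article sous-entend : le texte énonce cette proposition sans démonstration, comme « version analogue » de la proposition non tordue établie dans \cite{B}, et la réduction que vous proposez — substitution $p^{-s}\mapsto\chi(p)p^{-s}$ facteur eulérien par facteur eulérien, d'où $h_F^\bullet(n;\chi)=\chi(n)h_F^\bullet(n)$ et la majoration de convergence par $|\chi|\leqslant 1$ — est bien l'argument canonique qui la justifie. Les vérifications aux premiers divisant $q$, ramifiés dans $k$ ou divisant $\mathcal{D}(F)$ sont traitées correctement, et le cas de $\chi^2$ s'en déduit comme vous l'indiquez.
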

Rappelons la définition de la fonction logarithme intégral, notée $\li$. Pour tout $x\geqslant 2$,
$$\li(x):=\int_{2}^{x}{\frac{1}{\log t}{\rm d}t} {\rm .}$$ 
Ces deux propositions nous permettent d'énoncer le lemme suivant, établi dans \cite{H}.
\begin{lemme}
Soient $\mathbb{K}$ une extension cyclique de $\mathbb{Q}$ de degré $3$, $\chi$ un caractère non principal de $G=\rm{Gal}(\mathbb{K}/\mathbb{Q})$ et $F \in \mathbb{Z}[X,Y]$ une forme binaire de degré $3$ irréductible sur $\mathbb{K}$. Il existe une constante $c>0$ telle que, uniformément pour $x\geqslant 2$
\begin{equation}
\label{eq 1}
\sum\limits_{p\leqslant x}{\varrho_F^-(p)}=\li(x)+O(x\e^{-c\sqrt{\log x}})
\end{equation}
\begin{equation}
\label{eq 2}
\sum\limits_{p\leqslant x}{\chi(p)\varrho_F^-(p)}=O(x\e^{-c\sqrt{\log x}})
\end{equation}
\begin{equation}
\label{eq 3}
\sum\limits_{p\leqslant x}{\chi^2(p)\varrho_F^-(p)}=O(x\e^{-c\sqrt{\log x}}){\rm .}
\end{equation}
Nous avons les mêmes estimations en remplaçant la fonction $\varrho_F^-(p)$ par $\frac{\varrho_F^+(p)}{p}$ ou $\frac{\varrho_F^*(p)}{p}$.
\end{lemme}

\begin{lemme}
Soient $\varepsilon>0$, $\mathbb{K}$ une extension cyclique de $\mathbb{Q}$ de degré $3$, $\chi$ un caractère non principal de $G=\rm{Gal}(\mathbb{K}/\mathbb{Q})$ et $F \in \mathbb{Z}[X,Y]$ une forme binaire de degré $3$ irréductible sur $\mathbb{K}$. Nous avons, uniformément pour $y\geqslant 2$
\begin{equation}
\label{eq coro 1}
\sum\limits_{d>y}{(\chi*\chi^2)(d)\frac{\varrho_F^*(d)}{d^2}}\ll \frac{||F||^{\varepsilon}}{\log y}
\end{equation}
et
\begin{equation}
\label{eq coro 2}
\sum\limits_{(d_1,d_2)\notin [1,y]^2}{\chi(d_1)\chi^2(d_2)\frac{\varrho_F^*(d_1d_2)}{d_1^2d_2^2}}\ll \frac{||F||^{\varepsilon}}{\log y}{\rm .}
\end{equation}
\end{lemme}
\begin{proof}
La majoration \eqref{eq coro 1} se déduit directement des majorations \eqref{eq 2} et \eqref{eq 3} au moyen d'une intégration par parties. Pour démontrer la majoration \eqref{eq coro 2}, nous utilisons dans un premier temps la majoration \eqref{eq coro 1} pour majorer 
$$\sum\limits_{d>y^2}{(\chi*\chi^2)(d)\frac{\varrho_F^*(d)}{d^2}}{\rm .}$$
Il nous reste à majorer 
$$\sum\limits_{\substack{(d_1,d_2)\notin [1,y]^2 \\d_1d_2\leqslant y^2}}{\chi(d_1)\chi^2(d_2)\frac{\varrho_F^*(d_1d_2)}{d_1^2d_2^2}}{\rm .}$$
Majorons la contribution des couples $(d_1,d_2)$ tels que $d_1\leqslant y$. La contribution complémentaire est majorée de manière identique. Nous posons 
$$f_{d_1}(n):=\frac{\varrho_F^*(d_1n)}{\varrho_F^*(d_1)}{\rm .}$$
Il est facile de voir que la fonction $f_{d_1}$ est multiplicative. De plus, elle coïncide avec $\varrho_F^*(n)$ lorsque $n$ est premier avec $d_1$.\\ 

Déterminons une majoration de
\begin{equation}
\label{eq 13}
\sum\limits_{ y<d_2\leqslant y^2/d_1}{\chi^2(d_2)\frac{\varrho_F^*(d_1d_2)}{\varrho_F^*(d_1)d_2^2}}{\rm .}
\end{equation} 
Considérons la série de Dirichlet associée à la fonction $\chi^2(n)f_{d_1}(n)$, que nous noterons $F_{d_1}(s)$. Si nous notons $G(s)$ la série de Dirichlet associée à la fonction $\chi^2(n)\varrho_F^*(n)$, nous pouvons déduire l'existence d'une fonction $H_{d_1}$ admettant une écriture en produit eulérien absolument convergent sur le demi-plan \\
$\{s \in \mathbb{C} \mbox{ : }\mathcal{R}e(s)>1\}$ telle que $F_{d_1}(s)=H_{d_1}(s)G(s)$. De plus, ce produit eulérien s'écrit
\begin{equation}
H_{d_1}(s)=\prod\limits_{p\mid d_1}{\sum\limits_{\nu\geqslant 0}{\frac{h_{d_1}(p^{\nu})}{p^{\nu s}}}}{\rm .}
\end{equation}
 Nous pouvons déduire une majoration de la somme \eqref{eq 13} à partir d'une estimation des termes $h_{d_1}(p)=\chi^2(p)\Big(\frac{\varrho_F^*(p^{n+1})}{\varrho_F^*(p^{n})}-\varrho_F^*(p)\Big)$, où $n=v_p(d_1)$, pour tout $p \mid d_1$. Dans le cas où $p$ ne divise pas $F(1,0)\mathcal{D}(F)$, qui est non nul car $F$ est irréductible sur $\mathbb{K}$, nous pouvons utiliser le lemme 3.2 de \cite{B} afin d'obtenir 
\begin{align*}
|h_{d_1}(p)|&\leqslant p|1-\varrho_F^-(p)|+\varrho_F^-(p) \\
&\leqslant 2p+3{\rm .}
\end{align*}
Cela fournit
\begin{equation}
\label{rhod1}
\sum\limits_{ y<d_2\leqslant y^2/d_1}{\chi^2(d_2)\frac{\varrho_F^*(d_1d_2)}{\varrho_F^*(d_1)d_2^2}}=O\Big(||F||^{\varepsilon}\frac{d_1^2}{\phi(d_1)^2}\frac{1}{(\log y)^2}\Big){\rm .}
\end{equation}
Enfin, 
\begin{align*}
\sum\limits_{1\leqslant d_1\leqslant y}{\chi(d_1)\frac{\varrho_F^*(d_1)}{d_1^2}\sum\limits_{ y<d_2\leqslant y^2/d_1}{\chi^2(d_2)\frac{\varrho_F^*(d_1d_2)}{\varrho_F^*(d_1)d_2^2}}} &\ll \frac{||F||^{\varepsilon}}{(\log y)^2} \sum\limits_{1\leqslant d_1\leqslant y}{\frac{\varrho_F^*(d_1)}{\phi(d_1)^2}}\\
&\ll \frac{||F||^{2\varepsilon}}{\log y}{\rm ,}
\end{align*}
ce qui démontre la majoration \eqref{eq coro 2}.
\end{proof}
\subsection{Estimation de fonctions arithmétiques appliquées à des formes binaires}
 Avant d'énoncer le prochain résultat, établi dans \cite{B-T}, nous définissons une certaine classe de fonctions. 

 Lorsque $A_1\geqslant 1$, $A_2\geqslant 1$, et $\varepsilon>0$, nous notons $\mathcal{M}(A_1,A_2,\varepsilon)$ la classe des fonctions arithmétiques $f$ positives ou nulles satisfaisant la condition
$$f(ab)\leqslant \min(A_1^{\Omega(a)},A_2a^{\varepsilon})f(b)$$
pour tout $(a,b)=1$. Par ailleurs, pour $v\geqslant 1$ et $f$ une fonction arithmétique, nous définissons
\begin{equation}
E_F(v,f):=\prod\limits_{4<p\leqslant v}{\Big(1-\frac{\varrho_F^+(p)}{p^2}\Big)}\sum\limits_{1\leqslant s\leqslant v}{f(s)\frac{\varrho_F^+(s)}{s^2}}{\rm .}
\end{equation}
\begin{lemme}
\label{theo B}
Soit $F\in \mathbb{Z}[X,Y]$ une forme binaire primitive de degré au plus~$3$ irréductible sur $\mathbb{Q}$. Pour tout $\delta \in ]0,1[$, $A_1 \geqslant 1$, $A_2\geqslant 1$, il existe une constante $c_0$ telle que, uniformément sous les conditions 
\begin{align*}
0<\varepsilon\leqslant \delta/4000 {\rm ,}& &&f\in \mathcal{M}(A_1,A_2,\varepsilon){\rm ,}&& \min (x,y)\geqslant c_0\max(x,y,||F||)^{\delta}{\rm ,}
\end{align*}
on ait 
\begin{equation}
\sum\limits_{\substack{m\leqslant x \\ n\leqslant y}}{f(|F(m,n)|)}\ll xyE_F(x+y,f){\rm .}
\end{equation}
De plus, il existe une constante $C=C(A_1)>0$ telle que
\begin{equation}
E_F(v,f)\ll \prod\limits_{p\mid \mathcal{D}(F)}{\Big(1+\frac{1}{p}\Big)^C}\exp\Bigg(\sum\limits_{\substack{p\leqslant v \\ p\nmid \mathcal{D}(F)}}{\frac{\varrho_F^-(p)}{p}(f(p)-1)}\Bigg){\rm ,}
\end{equation}
où $\mathcal{D}(F)$ désigne le discriminant de $F$.
\end{lemme}

\begin{lemme}
\label{lemme B}
Soient $J \in \mathbb{Z}[X,Y]$ une forme binaire irréductible sur $\mathbb{Q}$ de degré au plus $3$, $A_1\geqslant 1$, $A_2\geqslant 1$, $\kappa>0$, $D\in \mathbb{N}^*$ et $M \in M_2(\mathbb{Z})$ de déterminant non nul. On suppose que la forme binaire
$$F(X,Y)=\frac{J(M(X,Y))}{D}$$
est à coefficients entiers. Il existe alors $\varepsilon_1>0$ tel que pour tout $0<\varepsilon<\varepsilon_1$, pour tout $f \in \mathcal{M}(A_1,A_2,\varepsilon)$ et uniformément sous la condition
$$v\geqslant ||M||^{\kappa}+\e^{\kappa D}$$
on ait
$$E_F(v,f)\ll ||F||^{\varepsilon}E_J(v,f){\rm ,} \;\;\;\;\;\; E_J(v,f)\ll\prod\limits_{p\leqslant v}{\Big(1+\frac{\rho_J^+(p)}{p}(f(p)-1)\Big)}{\rm .}$$
\end{lemme}
\subsection{Sur les fonctions Delta de Hooley généralisées}
 Pour $\mathbf{f}=(f_1,f_2)$ un couple de fonctions arithmétiques, $\mathbf{u}=(u_1,u_2) \in \mathbb{R}^2$, $\mathbf{v}=(v_1,v_2) \in [0,1]^2$ et $n\geqslant 1$, nous posons
$$\Delta_3(n,\mathbf{f},\mathbf{u},\mathbf{v}):=\sum\limits_{\substack{d_1d_2\mid n \\ \e^{u_i}<d_i\leqslant \e^{u_i+v_i}}}{f_1(d_1)f_2(d_2)}{\rm ,}$$
$$\Delta_3(n,\mathbf{f}):=\sup\limits_{\substack{\mathbf{u} \in \mathbb{R}^2 \\\mathbf{v} \in [0,1]^2}} |\Delta_3(n,\mathbf{f},\mathbf{u},\mathbf{v})|{\rm .}$$
Lorsque $\mathbf{f}=(1,1)$, nous obtenons la fonction de Hooley $\Delta_3$, étudiée notamment aux chapitres $6$ et $7$ de \cite{H-T}. Pour $A_1\geqslant 1$, nous désignons par $\mathcal{M}(A_1)$ l'ensemble $\cap_{\varepsilon >0}\cup_{A_2\geqslant 1} \mathcal{M}(A_1,A_2,\varepsilon)$. Pour tout $c>0$, nous notons $\mathcal{M}(A_1,c)$ le sous-ensemble de $\mathcal{M}(A_1)$ constitué des fonctions $g$ vérifiant
$$\sum\limits_{p\leqslant x}{g(p)}=\li(x)+O(x\e^{-c\sqrt{\log x}}) {\rm .}$$ 
Enfin, pour tout $\chi$, caractère de Dirichlet d'ordre $3$, nous notons $\mathcal{M}(A_1,\chi,c)$ le sous-ensemble de $\mathcal{M}(A_1,c)$ constitué des fonctions arithmétiques $g$ vérifiant 
\begin{align*}
&\sum\limits_{p\leqslant x}{\chi(p)g(p)}\ll x\e^{-c\sqrt{\log x}} {\rm ,}\\
&\sum\limits_{p\leqslant x}{\chi^2(p)g(p)}\ll x\e^{-c\sqrt{\log x}}{\rm .}
\end{align*}
\begin{remarque}
La fonction  $n\rightarrow~\Delta_3(n , \chi,\chi^2)$ appartient à $\mathcal{M}(3)$.
\end{remarque}
Définissons 
$$\mathcal{L}(x):=\exp{\sqrt{\log_2 x\log_3 x}}$$
et
\begin{equation}
\label{rho}
\rho:=\frac{1}{2\pi}\int_{0}^{2\pi}{\max(1,|1+\e^{it}|^2){\rm d}t}-2 \approx 0.21800 {\rm .}
\end{equation}
Nous rappelons dans un premier temps le résultat fondamental obtenu par Hall et Tenenbaum, correspondant aux théorèmes 70 et 71 de \cite{H-T}.
\begin{theoreme}
\label{theo T}
Soient $A_1\geqslant 1$, $c>0$ et $g \in \mathcal{M}(A_1,c)$. Pour tout $y>0$, lorsque $x\geqslant 10$, il existe une constante $\alpha=\alpha(g,y)>0$ telle que l'on ait
\begin{equation}
\sum\limits_{n\leqslant x}{g(n)y^{\omega(n)}\Delta_3(n)}\ll x(\log x)^{\max\{y-1,3y-3\}}\mathcal{L}(x)^{\alpha}{\rm .}
\end{equation}
\end{theoreme}
Le résultat suivant, correspondant au théorème 1.1 de \cite{L}, fournit une estimation des compensations dues aux oscillations des caractères $\chi$ et $\chi^2$ dans des intervalles du type $]\e^{u_1},\e^{u_1+1}]\times ]\e^{u_2},\e^{u_2+1}]$.
\begin{theoreme}
\label{theo L}
Soient $\chi$ un caractère de Dirichlet d'ordre $3$, $A_1\geqslant ~1$, $c>0$ et $g \in \mathcal{M}(A_1,\chi,c)$. Pour tout $y>0$, lorsque $x\geqslant 10$, il existe une constante $\alpha=\alpha(g,y)>0$ telle que l'on ait
\begin{equation}
\sum\limits_{n\leqslant x}{g(n)y^{\omega(n)}\Delta_3(n,\chi,\chi^2)^2}\ll x(\log x)^{\max\{y-1,(\rho+2)y-2,3y-3\}}\mathcal{L}(x)^{\alpha}{\rm .}
\end{equation}
\end{theoreme}
Des compensations des oscillations seraient d'ordre statistique si l'exposant de $\log x$ valait $\max\{y-1,3y-3\}$, autrement dit, si nous avions $\rho=0$. La valeur de $\rho$ obtenue permet néanmoins d'appliquer ce résultat au problème de comptage du Théorème \ref{theo 1}. 

\section{Simplification du problème}
\subsection{Une bonne approximation}
Pour estimer $Q(F,\xi,\mathcal{R})$, nous constatons que nous pouvons le réécrire sous la forme suivante
\begin{align*}
Q(F,\xi,\mathcal{R})&=\sum\limits_{(d_1,d_2) \in (\mathbb{N}^*)^2}{\chi(d_1)\chi^2(d_2)\sum\limits_{\substack{\bfx \in \mathbb{Z}^2\cap\mathcal{R}(\xi) \\ d_1d_2\mid F(\bfx)}}{1}} \\
&=\sum\limits_{(d_1,d_2) \in (\mathbb{N}^*)^2}{\chi(d_1)\chi^2(d_2)|\Lambda(d_1d_2,F)\cap\mathcal{R}(\xi)|}{\rm ,}
\end{align*}
où $\Lambda(s,F)$ est défini en \eqref{lambda}. Nous aurons donc besoin d'une bonne approximation de la quantité $|\Lambda(d_1d_2,F)\cap\mathcal{R}(\xi)|$ pour démontrer le Théorème~\ref{theo 1}. Pour des raisons techniques qui apparaitront par la suite, nous remplaçons~$\mathcal{R}(\xi)$ par $\mathcal{D}_q\cap \mathcal{R}(\xi)$ où $\mathcal{D}_q$ est défini par 
\begin{equation}
\label{Dq}
\mathcal{D}_q:=\{(m,n) \in \mathbb{Z}^2 \mbox{ : } (m,q)=1\}{\rm .}
\end{equation}
Pour $y_1, y_2 \geqslant 1$, $\sigma,\xi,\vartheta>0$ et $F \in \mathbb{Z}[X,Y]$ une forme binaire de degré $3$, nous posons
$$\Phi(\xi,y_1,y_2,F,\sigma,\vartheta):=\!\!\!\!\!\sum\limits_{\substack{1\leqslant d_1\leqslant y_1\\ 1\leqslant d_2\leqslant y_2 \\(q,d_1d_2)=1}}{\!\!\!\sup\limits_{\mathcal{R}}{\Big|| \Lambda(d_1d_2,F)\cap \mathcal{D}_q \cap \mathcal{R}(\xi)|-{\rm vol}(\mathcal{R})\xi^2\frac{\varphi(q)\varrho_F^+(d_1d_2)}{qd_1^2d_2^2}\Big|}} $$
où le $\sup$ est pris sur l'ensemble des domaines $\mathcal{R}$ vérifiant les hypothèses (H1), (H2) et (H3).
\begin{lemme}
\label{lemme 2}
Soient $\kappa>0$, $\varepsilon>0$, $\sigma>0$, $\vartheta>0$, $J$ une forme binaire de degré $3$ irréductible sur $\mathbb{Q}$, $F(\bfx)=J(M\bfx)/D \in \mathbb{Z}[X,Y]$, avec $M \in M_2(\mathbb{Z})$ de déterminant non nul et $D\in \mathbb{N}^*$. Sous les conditions

$$y_1,y_2\geqslant 2{\rm ,} \;\;\;\;\; \xi\geqslant ||M||^{\kappa}+\e^{\kappa D}{\rm ,} \;\;\;\;\; 1/\sqrt{\xi}\leqslant \sigma\leqslant \xi^{3/2}{\rm ,} \;\;\;\;\; 1/\sqrt{\xi}\leqslant \vartheta\leqslant \xi^{3/2}{\rm ,}$$
nous avons
\begin{equation}
\label{maj 3}
\Phi(\xi,y_1,y_2,F,\sigma,\vartheta)\ll ||F||^{\varepsilon}\big(\sigma\xi\sqrt{y_1y_2}+y_1y_2\big)
\mathcal{L}(\sigma\xi)^{\sqrt{3}+\varepsilon}{\rm .}
\end{equation}
\end{lemme}

\begin{proof}
Le début de la démonstration est identique à celui du lemme~5.2 de \cite{B}. Nous pouvons supposer que la forme $F$ est primitive, la majoration recherchée découle alors d'une majoration de la quantité suivante

\begin{equation}
\Phi^*:= \sum\limits_{\substack{1\leqslant d_1\leqslant y_1 \\ 1\leqslant d_2\leqslant y_2 \\(q,d_1d_2)=1}}{\sup\limits_{\mathcal{R}}{\Big|| \Lambda^*(d_1d_2,F)\cap \mathcal{D}_q \cap \mathcal{R}(\xi)|-{\rm vol}(\mathcal{R})\xi^2\frac{\varphi(q)\varrho_F^*(d_1d_2)}{qd_1^2d_2^2}\Big|}} {\rm ,}
\end{equation}
où $\Lambda^*$ est défini en \eqref{lambda*}. De même  que dans la démonstration du lemme 5.2 de \cite{B}, nous utilisons la formule d'inversion de Möbius et l'approximation du nombre de points d'un réseau dans un domaine convexe de $\mathbb{R}^2$ pour obtenir
\begin{equation}
\label{34}
\Phi^*\ll \sigma\xi\Phi_1^*+\Phi_2^*
\end{equation}
où 
$$\Phi_1^*:=\sum\limits_{\substack{b_1\leqslant y_1 \\ b_2\leqslant y_2}}{\sum\limits_{\substack{t_1\leqslant y_1/b_1\\t_2\leqslant y_2/b_2}}{\sum\limits_{\mathcal{A}\in \mathcal{U}_F(b_1b_2t_1t_2)}{\frac{1}{||v_{t_1t_2}(\mathcal{A})||}}}}$$
et
$$\Phi_2^*:=\sum\limits_{\substack{s_1\leqslant y_1\\ s_2\leqslant y_2}}{\frac{\varrho_F^*(s_1s_2)}{\phi(s_1s_2)}\sum\limits_{\substack{b_1\leqslant y_1 \\ b_2\leqslant y_2\\ b_1\mid s_1 \\b_2\mid s_2}}{\min \Big\{1,\frac{\sigma^2 \xi^2}{b_1b_2s_1s_2}\Big\}}} {\rm .}$$
Dans $\Phi_1^*$, pour $s \in \mathbb{N^*}$, $\mathcal{U}_F(s)$ désigne l'ensemble des classes d'équivalence de $\Lambda^*(s,F)$ défini par la relation $\bfx\sim \bfy$ si et seulement si il existe $\lambda \in \mathbb{Z}$ tel que $\bfx\equiv\lambda\bfy \bmod s$. Cet ensemble est de cardinal $\varrho_F^*(s)/\phi(s)$. Pour $\mathcal{A} \in \mathcal{U}_F(s)$, $t \mid s$ et $\bfx \in \mathcal{A}$, nous désignons par $\mathcal{A}_t$ l'ensemble
$$\mathcal{A}_t:=\{\mathbf{y} \in \mathbb{Z}^2\mbox{ : } \exists \lambda \in \mathbb{Z}{\rm , }\bfy\equiv \lambda \bfx \bmod t\}{\rm ,}$$
puis, nous définissons $v_t(\mathcal{A})$ comme un vecteur minimal non nul de $\mathcal{A}_t$. Ce vecteur vérifie alors 
$$||v_t(\mathcal{A})||\leqslant \sqrt{2t}{\rm .}$$
Nous réutilisons de nouveau la formule (5.29) de \cite{B} pour majorer $\Phi_1^*$. Lorsque $||v_{t_1t_2}||/\sqrt{2y_1y_2/b_1b_2} \in ]1/2^{j+1}, 1/2^j]$, nous avons $t_1t_2>y_1y_2/(b_1b_22^{2(j+1)})$. Nous pouvons donc écrire
$$\Phi_1^*\ll\sum\limits_{\substack{b_1\leqslant y_1 \\ b_2\leqslant y_2}}{\frac{1}{b_1b_2}\sum\limits_{j\geqslant 0}{\sum\limits_{\substack{y_1/(b_12^{2(j+1)})<t_1\leqslant y_1/b_1 \\y_2/(b_22^{2(j+1)})<t_2\leqslant y_2/b_2}}{\frac{2^j}{\sqrt{y_1y_2/b_1b_2}}\sum\limits_{\substack{v \in (\mathbb{Z}^2)^* \\ ||v||\leqslant \sqrt{2y_1y_2/b_1b_2}/2^j \\ t_1t_2 \mid T(v)}}{1}}}}{\rm .}$$
Nous appliquons alors le Lemme \ref{theo B} pour obtenir
\begin{align}
\label{35}
\begin{split}
\Phi_1^*&\ll\sum\limits_{\substack{b_1\leqslant y_1 \\ b_2\leqslant y_2}}{\frac{1}{b_1b_2}\sum\limits_{j\geqslant 0}{\frac{(j+1)^22^j}{\sqrt{y_1y_2/b_1b_2}}\sum\limits_{\substack{v \in (\mathbb{Z}^2)^* \\ ||v||\leqslant \sqrt{2y_1y_2/b_1b_2}/2^j }}{\Delta_3(F(v),1)}}} \\
&\ll ||F||^{\varepsilon}\sqrt{y_1y_2} \mathcal{L}(\sigma\xi)^{\sqrt{3}+\varepsilon} {\rm .}
\end{split}
\end{align}
Majorons à présent $\Phi_2^*$. La contribution $\Phi_{21}^*$ à $\Phi_2^*$ des entiers $s_1$, $s_2$ tels que $b_1b_2s_1s_2\geqslant \sigma^2\xi^2$ vérifie
\begin{equation}
\label{36}
\Phi_{21}^*\leqslant \sigma^2\xi^2\sum\limits_{s\leqslant y_1y_2}{\frac{\varrho_F^*(s)\tau(s)}{\varphi(s)}\sum\limits_{\substack{b\leqslant y_1y_2 \\ b \mid s \\bs>\sigma^2\xi^2}}{\frac{\tau(b)}{bs}}}{\rm .}
\end{equation}
Cette majoration s'obtient en posant $b=b_1b_2$ et $s=s_1s_2$  dans la somme définissant $\Phi_{21}^*$. Nous pouvons alors écrire
\begin{align}
\label{36.5}
\nonumber
\Phi_{21}^* &\ll_{\varepsilon} ||F||^{\varepsilon/3}\sigma^2\xi^2\sum\limits_{b>(\sigma \xi)^2/(y_1y_2)}{\frac{\varrho_F^*(b)\tau^2(b)}{\varphi(b)b^2}\sum\limits_{\sigma^2\xi^2/b<t\leqslant y_1y_2/b}{\frac{\varrho_F^*(t)\tau(t)}{\varphi(t)}}}\\
\nonumber
\Phi_{21}^* &\ll_{\varepsilon} ||F||^{2\varepsilon/3}\sigma^2\xi^2\sum\limits_{b>(\sigma \xi)^2/(y_1y_2)}{\frac{\varrho_F^*(b)\tau^2(b)}{\varphi(b)b^2}\log^2\Big(2+\frac{y_1y_2b}{\sigma^2\xi^2}\Big)}\\
\Phi_{21}^*& \ll_{\varepsilon}||F||^{\varepsilon}\min(\sigma^2\xi^2,y_1y_2)\log^2\Big(2+\frac{y_1y_2}{\sigma^2\xi^2}\Big){\rm .}
\end{align}
Par ailleurs, la contribution $\Phi_{22}^*$ à $\Phi_2^*$ des entiers $s_1$, $s_2$ tels que $b_1b_2s_1s_2\leqslant \sigma^2\xi^2$ vérifie
\begin{align*}
\Phi_{22}^* & \leqslant \sum\limits_{\substack{b_1\leqslant y_1 \\ b_2\leqslant y_2}}{\sum\limits_{\substack{t_1\leqslant y_1/b_1 \\ t_2\leqslant y_2/b_2 \\ t_1t_2\leqslant \sigma^2\xi^2/(b_1^2b_2^2)}}{\frac{\varrho_F^*(b_1b_2t_1t_2)}{\varphi(b_1b_2t_1t_2)}}} \\
& \ll_{\varepsilon} ||F||^{\varepsilon/5} \sum\limits_{\substack{b_1\leqslant y_1 \\ b_2\leqslant y_2}}{\frac{\varrho_F^*(b_1)\varrho_F^*(b_2)}{\varphi(b_1)\varphi(b_2)}{\sum\limits_{t_1\leqslant y_1/b_1}{\frac{\varrho_F^*(t_1)}{\varphi(t_1)}{\sum\limits_{t_2\leqslant \min\{(y_2/b_2, \sigma^2\xi^2/(b_1^2b_2^2t_1)\}}{\frac{\varrho_F^*(t_2)}{\varphi(t_2)}}}}}} \\
& \ll_{\varepsilon} ||F||^{2\varepsilon/5}\sum\limits_{\substack{b_1\leqslant y_1 \\ b_2\leqslant y_2}}{\frac{\varrho_F^*(b_1)\varrho_F^*(b_2)}{\varphi(b_1)\varphi(b_2)}{\sum\limits_{t_1\leqslant y_1/b_1}{\frac{\varrho_F^*(t_1)}{\varphi(t_1)b_2}{\min\Big(\frac{\sigma^2\xi^2}{t_1b_1^2b_2},y_2\Big)}}}}\\
& \ll_{\varepsilon} ||F||^{2\varepsilon/5}\sum\limits_{b_1\leqslant y_1 }{\frac{\varrho_F^*(b_1)}{\varphi(b_1)}\sum\limits_{t_1\leqslant y_1/b_1}{\frac{\varrho_F^*(t_1)}{\varphi(t_1)}\sum\limits_{b_2\leqslant y_2}{\frac{\varrho_F^*(b_2)}{\varphi(b_2)}\min\Big(\frac{\sigma^2\xi^2}{t_1b_1^2b_2},y_2\Big)}}}
\end{align*}
Nous utilisons la majoration de $\Phi_{22}^*$ déterminée dans \cite{B}, en remplaçant le terme $\sigma^2\xi^2$ par $\sigma^2\xi^2/(t_1b_1^2)$, pour majorer la somme intérieure. Nous obtenons ainsi
$$\Phi_{22}^* \ll_{\varepsilon} ||F||^{3\varepsilon/5}y_2\sum\limits_{b_1\leqslant y_1 }{\frac{\varrho_F^*(b_1)}{\varphi(b_1)}\sum\limits_{t_1\leqslant y_1/b_1}{\frac{\varrho_F^*(t_1)}{\varphi(t_1)}\min\Big(\frac{\sigma^2\xi^2}{t_1b_1^2y_2},1\Big)\log\Big(2+\frac{\sigma^2\xi^2}{t_1b_1^2y_2}\Big)}}{\rm .}$$
Notons $\beta$ le majorant de $\Phi_{22}^*$ ci dessus. Nous souhaitons alors utiliser la majoration de $\Phi_2^*$ déterminée dans \cite{B}, en remplaçant $\sigma^2\xi^2$ par $\sigma^2\xi^2/y_2$, cependant, nous devons tenir compte du terme $\log\Big(2+\frac{\sigma^2\xi^2}{t_1b_1^2y_2}\Big)$ apparaissant dans la somme. En modifiant légèrement la démonstration de cette majoration, nous parvenons néanmoins au résultat souhaité. En effet, lorsque $t_1b_1^2\geqslant \sigma^2\xi^2/y_2$, nous avons $\log\Big(2+\frac{\sigma^2\xi^2}{t_1b_1^2y_2}\Big) \ll 1$. Ainsi, nous pouvons utiliser la majoration de $\Phi_{21}^*$ de \cite{B} pour majorer la contribution $\beta_1$ à $\beta$ des entiers $b_1$ et $t_1$ tels que $t_1b_1^2\geqslant \sigma^2\xi^2/y_2$. Nous obtenons
\begin{equation}
\label{37}
\beta_1\ll_{\varepsilon} ||F||^{\varepsilon}\sigma\xi\min(\sigma\xi,\sqrt{y_1y_2})\log\Big(2+\frac{y_1y_2}{\sigma^2\xi^2}\Big){\rm .}
\end{equation}
Pour majorer la contribution $\beta_2$ à $\beta$ des entiers $b_1$, $t_1$ tels que $t_1b_1^2\leqslant \sigma^2\xi^2/y_2$, nous écrivons
\begin{align}
\label{38}
\begin{split}
\beta_2& \ll_{\varepsilon} ||F||^{3\varepsilon/5}y_2\sum\limits_{b_1\leqslant y_1 }{\frac{\varrho_F^*(b_1)}{\varphi(b_1)}\sum\limits_{t_1\leqslant \min(y_1/b_1,\sigma^2\xi^2/(b_1^2y_2) }{\frac{\varrho_F^*(t_1)}{\varphi(t_1)}\log\Big(\frac{\sigma^2\xi^2}{t_1b_1^2y_2}\Big)}} \\
& \ll_{\varepsilon} ||F||^{4\varepsilon/5}y_2\sum\limits_{b_1\leqslant y_1 }{\frac{\varrho_F^*(b_1)}{\varphi(b_1)b_1}\min\Big(y_1,\frac{\sigma^2\xi^2}{b_1y_2}\Big)\log\Big(2+\frac{\sigma^2\xi^2}{y_1b_1y_2}\Big)} \\
& \ll_{\varepsilon} ||F||^{\varepsilon}y_2\min\Big(y_1,\frac{\sigma^2\xi^2}{y_2}\Big)\log\Big(2+\frac{\sigma^2\xi^2}{y_1y_2}\Big)
\end{split}
\end{align}
Les inégalités \eqref{37} et \eqref{38} fournissent
\begin{equation}
\label{39}
\beta\ll_{\varepsilon} ||F||^{\varepsilon}\sigma\xi\min(\sqrt{y_1y_2},\sigma\xi)\log_2\xi {\rm .}
\end{equation}
La majoration de $\Phi_2^*$ se déduit alors des majorations \eqref{36.5} et \eqref{39}.
\begin{equation}
\label{40}
\Phi_2^*\ll_{\varepsilon} ||F||^{\varepsilon}\sigma\xi\min(\sqrt{y_1y_2},\sigma\xi)\Big\{\log_2\xi+\log\Big(2+\frac{y_1y_2}{\sigma^2\xi^2}\Big)\Big\}{\rm .}
\end{equation} 
En reportant les équations \eqref{35} et \eqref{40} dans la formule \eqref{34}, nous obtenons la majoration \eqref{maj 3}.
\end{proof}
Nous énonçons deux corollaires de ce résultat.
\begin{corollaire}
\label{coro 3.3}
Soient $y\geqslant 2$, $\varepsilon$, $\sigma$, $\xi$ vérifiant les mêmes conditions que celles du Lemme \ref{lemme 2}. Nous avons uniformément pour $u \in \mathbb{R^+}$

\begin{align}
\label{maj coro}
\begin{split}
\sum\limits_{\substack{1\leqslant d_1d_2\leqslant y \\ \e^u<d_2\leqslant \e^{u+1} \\(q,d_1d_2)=1}}{\sup\limits_{\mathcal{R}}{\Big|| \Lambda(d_1d_2,F)\cap \mathcal{D}_q \cap \mathcal{R}(\xi)|-{\rm vol}(\mathcal{R})\xi^2\frac{\varphi(q)\varrho_F^+(d_1d_2)}{qd_1^2d_2^2}\Big|}} \\
 \ll_{\varepsilon}  ||F||^{\varepsilon}\big((\sigma+\vartheta)\xi\sqrt{y}+y\big)
\mathcal{L}(\sigma\xi)^{\sqrt{3}+\varepsilon}{\rm .}
\end{split}
\end{align}
\end{corollaire}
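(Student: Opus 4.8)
Le plan est de d\'eduire ce corollaire directement du Lemme~\ref{lemme 2}, en reconnaissant la somme \`a estimer comme une sous-somme de $\Phi(\xi,y_1,y_2,F,\sigma,\vartheta)$ pour un choix convenable de $y_1$ et $y_2$, uniforme en $u$. Le point crucial est que la restriction $\e^u<d_2\leqslant \e^{u+1}$, coupl\'ee \`a la contrainte de produit $d_1d_2\leqslant y$, confine le couple $(d_1,d_2)$ dans un pav\'e dont l'aire reste d'ordre $y$ ind\'ependamment de $u$. Je commencerais par observer que tous les termes du membre de gauche sont positifs ou nuls (ce sont des valeurs absolues), de sorte qu'en \'elargissant le domaine de sommation on ne fait qu'augmenter la somme. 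Or, si $\e^u<d_2\leqslant \e^{u+1}$ et $d_1d_2\leqslant y$, alors $d_2\leqslant \e^{u+1}$ et $d_1\leqslant y/d_2< y\e^{-u}$. La somme est donc major\'ee par $\Phi(\xi,y_1,y_2,F,\sigma,\vartheta)$ avec $y_1:=\max(2,\lceil y\e^{-u}\rceil)$ et $y_2:=\lceil \e^{u+1}\rceil$.

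Il reste \`a v\'erifier que $y_1y_2\asymp y$ uniform\'ement en $u$ et que les hypoth\`eses du Lemme~\ref{lemme 2} sont satisfaites (les conditions sur $\varepsilon$, $\sigma$, $\vartheta$, $\xi$, $M$ et $D$ \'etant suppos\'ees identiques). On distingue trois r\'egimes. Si $\e^u\geqslant y$, la somme est vide, car $d_1\geqslant 1$ forcerait $d_2\leqslant y< \e^u$, et la majoration est triviale. Si $y\e^{-u}\geqslant 2$, on a $y_1\asymp y\e^{-u}$ et $y_2\asymp \e^{u+1}$, d'o\`u $y_1y_2\asymp \e y\asymp y$. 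Dans le cas interm\'ediaire $y/2<\e^u<y$, on a $y_1=2$ et $y_2\asymp \e^{u+1}\asymp y$, donc \`a nouveau $y_1y_2\asymp y$. Dans tous les cas $y_1,y_2\geqslant 2$ et $\sqrt{y_1y_2}\asymp \sqrt{y}$.

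En reportant ces valeurs dans la majoration \eqref{maj 3}, on obtient directement $\ll_{\varepsilon}\|F\|^{\varepsilon}\big(\sigma\xi\sqrt{y}+y\big)\mathcal{L}(\sigma\xi)^{\sqrt{3}+\varepsilon}$, et la majoration triviale $\sigma\leqslant \sigma+\vartheta$ fournit alors exactement la forme annonc\'ee \eqref{maj coro} (le terme en $\vartheta$ n'\'etant ici qu'un affaiblissement commode du r\'esultat du Lemme~\ref{lemme 2}, qui servira \`a uniformiser l'\'ecriture lors de son emploi dans la preuve du Th\'eor\`eme~\ref{theo 1}). La seule v\'eritable d\'elicatesse de l'argument est le contr\^ole uniforme en $u$ du produit $y_1y_2$ et le traitement correct des cas limites $y\e^{-u}<2$ ; une fois ce point acquis, le corollaire est une cons\'equence imm\'ediate du lemme.
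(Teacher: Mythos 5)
Votre preuve est correcte et suit essentiellement la m\^eme d\'emarche que celle du papier : on observe que les conditions $d_1d_2\leqslant y$ et $\e^u<d_2\leqslant \e^{u+1}$ impliquent $d_1\leqslant y\e^{-u}$ et $d_2\leqslant \e^{u+1}$, puis on applique le Lemme~\ref{lemme 2} avec ces valeurs de $y_1$ et $y_2$, dont le produit est d'ordre $y$ uniform\'ement en $u$. Votre traitement explicite des cas limites o\`u $y\e^{-u}<2$ est un compl\'ement soigneux que le papier passe sous silence, mais il ne change pas la nature de l'argument.
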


\begin{proof}
Les conditions $d_1d_2\leqslant y$ et $\e^u<d_2\leqslant \e^{u+1}$ impliquent $d_2\leqslant ~\e^{u+1}$ et $d_1\leqslant y\e^{-u}$. Il nous suffit d'appliquer le Lemme \ref{lemme 2} avec ces valeurs pour $y_1$ et $y_2$ afin d'obtenir le résultat.
\end{proof}
Pour le second corollaire, nous posons
\begin{align}
\label{eq phi}
\begin{split}
\phi&:=\phi(\xi,y,F,\sigma,\vartheta)\\
&:=\sum\limits_{\substack{1\leqslant d_1d_2\leqslant y \\(q,d_1d_2)=1}}{\sup\limits_{\mathcal{R}}{\Big|| \Lambda(d_1d_2,F)\cap \mathcal{D}_q \cap \mathcal{R}(\xi)|-{\rm vol}(\mathcal{R})\xi^2\frac{\varphi(q)\varrho_F^+(d_1d_2)}{qd_1^2d_2^2}\Big|}}{\rm .} 
\end{split}
\end{align}
\begin{corollaire}
\label{lemme 1}
Soient $A>0$, $\kappa>0$, $\varepsilon>0$, $\sigma>0$, $\vartheta>0$, $J$ une forme binaire de degré $3$ irréductible sur $\mathbb{Q}$ et $F(\bfx)=J(M\bfx)/D \in \mathbb{Z}[X,Y]$, avec $M \in M_2(\mathbb{Z})$ de déterminant non nul. Sous les conditions
$$2\leqslant y \leqslant \xi^A{\rm ,} \;\;\;\;\; \xi\geqslant ||M||^{\kappa}+\e^{\kappa D}{\rm ,} \;\;\;\;\; 1/\sqrt{\xi}\leqslant \sigma\leqslant \xi^{3/2}{\rm ,} \;\;\;\;\; 1/\sqrt{\xi}\leqslant \vartheta\leqslant \xi^{3/2}{\rm ,}$$
nous avons
\begin{equation}
\label{maj}
\phi(\xi,y,F,\sigma,\vartheta)\ll_{\varepsilon,A} ||F||^{\varepsilon}\big(\sigma\xi\sqrt{y}+y\big)\mathcal{L}(\sigma\xi)^{\sqrt{2}+\varepsilon}\log \xi{\rm .}
\end{equation}
\end{corollaire}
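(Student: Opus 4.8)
The plan is to retrace the proof of Lemma~\ref{lemme 2}, inserting one extra idea that trades the exponent $\sqrt{3}$ of $\mathcal{L}(\sigma\xi)$ for $\sqrt{2}$ at the cost of a single factor $\log\xi$. First I would cut the sum defining $\phi$ according to the dyadic position of $d_2$, writing $\phi=\sum_{0\leqslant u\ll_A\log\xi}\phi_u$, where $\phi_u$ denotes the subsum of \eqref{eq phi} with the extra restriction $\e^{u}<d_2\leqslant\e^{u+1}$. Because $d_2\leqslant d_1d_2\leqslant y\leqslant\xi^A$, only $O_A(\log\xi)$ intervals $]\e^{u},\e^{u+1}]$ occur, and this is precisely the origin of the factor $\log\xi$ in \eqref{maj}.

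For a fixed $u$ I would run the opening steps of Lemma~\ref{lemme 2} unchanged (reduction to a primitive form, passage from $\Lambda$ to $\Lambda^{*}$ by M\"obius inversion, and the convex lattice-point count), reaching a splitting $\phi_u\ll\sigma\xi\,\Phi_1^{*}+\Phi_2^{*}$ with $\Phi_1^{*}$ carrying the arithmetic of the form. The decisive point is that $d_2$ is now confined to a \emph{single} logarithmic unit interval, whereas in Lemma~\ref{lemme 2} the ranges $d_1\leqslant y_1$ and $d_2\leqslant y_2$ were independent. Carrying this restriction through the manipulation that produced $\sum_{v}\Delta_3(F(v),1)$, the pair of parameters $(u_2,v_2)$ attached to $d_2$ in the definition of $\Delta_3$ is frozen to $(u,1)$, so that the two-parameter Hooley function $\Delta_3(\,\cdot\,,1)$ degenerates into a one-parameter Hooley function of $\Delta$-type, only the interval attached to $d_1$ remaining free. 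Replacing the $\Delta_3$-estimate by the corresponding one-interval Hall--Tenenbaum mean-value bound of \cite{H-T} (the analogue of Theorem~\ref{theo T} for $\Delta$ instead of $\Delta_3$) then substitutes $\sqrt{2}$ for $\sqrt{3}$, giving $\sigma\xi\,\Phi_1^{*}\ll ||F||^{\varepsilon}\sigma\xi\sqrt{y}\,\mathcal{L}(\sigma\xi)^{\sqrt{2}+\varepsilon}$; here I use $y_1y_2\asymp y$ (since $y_1=y\e^{-u}$ and $y_2=\e^{u+1}$), so that each single interval still sees the full range $\sqrt{y}$.

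The term $\Phi_2^{*}$ I would bound exactly as in Lemma~\ref{lemme 2}: the same splitting according to the size of $b_1b_2s_1s_2$ relative to $\sigma^2\xi^2$, together with the inequality $\log(2+t)\ll t$ for $t\geqslant1$, lets the logarithmic factors be absorbed and yields $\Phi_2^{*}\ll ||F||^{\varepsilon}(\sigma\xi\sqrt{y}+y)\mathcal{L}(\sigma\xi)^{\varepsilon}$, with no dependence on $\vartheta$ surviving since only the product $d_1d_2$ is constrained. Adding the two contributions gives $\phi_u\ll ||F||^{\varepsilon}(\sigma\xi\sqrt{y}+y)\mathcal{L}(\sigma\xi)^{\sqrt{2}+\varepsilon}$ uniformly in $u$, and summing over the $O_A(\log\xi)$ values of $u$ produces \eqref{maj}.

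The step I expect to be the real obstacle is the degeneration from $\Delta_3$ to $\Delta$: one must check that freezing the logarithmic interval of $d_2$ genuinely removes one of the two suprema defining $\Delta_3(\,\cdot\,,1)$, uniformly both in the position $u$ and in the dyadic scale $2^{-j}\sqrt{2y_1y_2/b_1b_2}$ of the box of vectors $v$, and that the resulting one-interval estimate applies within the class $\mathcal{M}(A_1,c)$ with the parameter dictated by the density of values of the cubic form $F$. A secondary verification is that the summation over $u$ of these full-range bounds costs only the single power $\log\xi$ recorded in \eqref{maj}, and not a higher power arising from the logarithmic factors internal to $\Phi_2^{*}$.
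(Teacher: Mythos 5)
Your skeleton is exactly the paper's proof: cut $\phi$ into the $O_A(\log\xi)$ subsums with $\e^{u}<d_2\leqslant\e^{u+1}$, bound each one uniformly in $u$ by running the machinery of Lemma \ref{lemme 2} with $y_1=y\e^{-u}$, $y_2=\e^{u+1}$ (the paper isolates this per-window bound as Corollary \ref{coro 3.3}), and sum over $u$ to collect the factor $\log\xi$. That part is sound, and it yields
$$\phi\ll_{\varepsilon,A}||F||^{\varepsilon}\big((\sigma+\vartheta)\xi\sqrt{y}+y\big)\mathcal{L}(\sigma\xi)^{\sqrt{3}+\varepsilon}\log\xi{\rm ,}$$
which is precisely what the paper's own proof establishes and what is used downstream (in the estimates of $C_2$, $C_4$, $C_{71}$, etc., the exponent $\sqrt{3}+\varepsilon$ reappears). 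The exponent $\sqrt{2}$ printed in the statement, like the disappearance of $\vartheta$ from the main term, appears to be a misprint.

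The extra step you insert to reach the literal $\sqrt{2}$ --- the claimed degeneration of $\Delta_3$ into the one-parameter $\Delta$ once the logarithmic window of $d_2$ is frozen --- is the step that fails, and you rightly flagged it as the critical one. Freezing $(u_2,v_2)=(u,1)$ in $\Delta_3(n,\mathbf{1},\mathbf{u},\mathbf{v})$ still leaves a count of \emph{pairs} $(d_1,d_2)$ with $d_1d_2\mid n$ and each $d_i$ confined to a unit logarithmic interval; but $\Delta_3(n)$ is by definition the supremum of exactly such counts over the choice of the two intervals. So the frozen quantity is bounded by $\Delta_3(n)$ and, for the worst window $u$ (which necessarily occurs among the $O(\log\xi)$ windows you sum over), it attains $\Delta_3(n)$: nothing one-dimensional survives. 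In particular it is not comparable to $\Delta(n)$, which counts single divisors in a box --- after the M\"obius inversion the second divisor variable $t_2=d_2/b_2$ still runs over a full unit interval for each $b_2$, and the pair count cannot be collapsed onto one variable. Hence the Hall--Tenenbaum mean-value bound for $\Delta$ cannot be substituted for the $\Delta_3$ bound of Theorem \ref{theo T}, and the exponent $\sqrt{3}$ cannot be lowered by this device. Dropping that step and keeping $\Delta_3$ throughout, your argument coincides with the paper's and proves the corollary with $\sqrt{3}$ in place of $\sqrt{2}$, which is all that the rest of the paper requires.
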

\begin{proof}
Nous pouvons réécrire $\phi$ sous la forme suivante
$$\phi= \sum\limits_{k=0}^{[\log y]}{\sum\limits_{\substack{1\leqslant d_1d_2\leqslant y \\\e^k\leqslant d_2< \e^{k+1} \\(q,d_1d_2)=1}}{\sup\limits_{\mathcal{R}}{\Big|| \Lambda(d_1d_2,F)\cap \mathcal{D}_q \cap \mathcal{R}(\xi)|-{\rm vol}(\mathcal{R})\xi^2\frac{\varphi(q)\varrho_F^+(d_1d_2)}{qd_1^2d_2^2}\Big|}}} {\rm .}$$
Nous utilisons le corollaire \ref{coro 3.3} pour majorer la somme intérieure uniformément en $k$, nous obtenons ainsi
\begin{align*}
\phi&\ll_{\varepsilon} \sum\limits_{k=0}^{[\log y]}{||F||^{\varepsilon}\big((\sigma+\vartheta)\xi\sqrt{y}+y\big)
\mathcal{L}(\sigma\xi)^{\sqrt{3}+\varepsilon}} \\
&\ll_{\varepsilon}  ||F||^{\varepsilon}\big((\sigma+\vartheta)\xi\sqrt{y}+y\big)
\mathcal{L}(\sigma\xi)^{\sqrt{3}+\varepsilon}\log y \\
&\ll_{\varepsilon,A}  ||F||^{\varepsilon}\big((\sigma+\vartheta)\xi\sqrt{y}+y\big)
\mathcal{L}(\sigma\xi)^{\sqrt{3}+\varepsilon}\log \xi{\rm .}
\end{align*}
\end{proof}
\subsection{Paramétrisation de la somme \eqref{eq Q}} L'objectif dans cette section, est de paramétrer l'ensemble des couples $(m,n)$ pour lesquels $r_3\big(F((m,n))\big)\neq 0$, où $r_3$ est défini en \eqref{r}. \\

Notons que pour $d \mid q^{\infty}$ et $n \in \mathbb{Z}$, nous avons
$$r_3(dn)=r_3(n){\rm }$$
 \\

Nous pouvons, dans un premier temps, nous ramener au cas où $(m,n,q)=~~1$. En effet, comme le polynôme $F$ est homogène, nous pouvons écrire
\begin{equation}
\label{eq Q'}
Q(F,\xi,\mathcal{R})=\sum\limits_{d \mid q^{\infty}}{Q_1\Big(F,\frac{\xi}{d},\mathcal{R}\Big)}
\end{equation}
où
$$Q_1(F,\xi,\mathcal{R}):=\sum\limits_{\substack{\bfx \in \mathcal{R}(\xi)\cap \mathbb{Z}^2 \\ (q,\bfx)=1}}{r_3\big(F(\bfx)\big)}{\rm .}$$

Pour $\bfx \in \mathbb{Z}^2$ tel que $(q,\bfx)=1$, si nous posons $\bfx:=(m,n)$, il existe un unique entier $d_1 \mid q^{\infty}$ tel que $m=d_1m_1$ et $(m_1,q)=1$. La condition $(q,m,n)=1$ est alors équivalente à la condition $(d_1,n)=1$. De plus, si nous notons $d_2=(q^{\infty},F(\bfx))$  alors $r_3(F(\bfx))\neq 0$ implique qu'il existe $\alpha_1 \in (\mathbb{Z}/q\mathbb{Z})^{\times}$ vérifiant $\chi(\alpha_1)=1$ tel que $F(\bfx)/d_2\equiv \alpha_1 \bmod q$. Nous rappelons que $G_1={\rm Ker}(\chi) \subset (\mathbb{Z}/q\mathbb{Z})^{\times}$ et nous constatons de suite qu'il contient le groupe des cubes de $(\mathbb{Z}/q\mathbb{Z})^{\times}$. En posant $\alpha=\alpha_1m_1^{-3} \in(\mathbb{Z}/q\mathbb{Z})^{\times} $, nous avons $\alpha \in G_1$ et  $F(\bfx)/d_2\equiv m_1^3\alpha \bmod q$.\\ 

Pour $\alpha \in G_1$, $d_1\mid q^{\infty}$ et $d_2\mid q^{\infty}$, nous notons $W_{\alpha,d_1,d_2}$ l'ensemble des $\beta \in \mathbb{Z}/d_2q\mathbb{Z}$ premiers avec $d_1$ (Les entiers $d_1$ et $d_2$ étant des diviseurs de $q^{\infty}$, cette condition de primalité est bien définie.), assimilés à un système de représentants dans $[1,d_2q]$, vérifiant $F(d_1,\beta)\equiv \alpha d_2\bmod d_2q$. Nous pouvons alors écrire la congruence suivante
$$\bfx\equiv m_1(d_1,\beta)\bmod d_2q {\rm .}$$
Cela signifie qu'il existe un entier $n_1 \in \mathbb{Z}^2$ tel que 
$$\bfx=(d_1m_1,\beta m_1+d_2q n_1)=U_{\beta,d_1,d_2}(m_1,n_1)$$
où nous avons posé
 $$U_{\beta,d_1,d_2}:=\begin{pmatrix}
 d_1 & 0 \\
 \beta & d_2q
 \end{pmatrix}{\rm .}$$
 Notons que cette matrice est de déterminant $d_1d_2q$.\\
 Ainsi, un couple $\bfx$ contribuant à la somme \eqref{1} peut être déterminé par cinq paramètres $(d_1,\beta,d_2,m_1,n_1)$, il se trouve que cette détermination est unique, nous obtenons ainsi l'égalité suivante sur $Q_1$. 
\begin{equation}
\label{eq Q1}
Q_1(F,\xi,\mathcal{R})=\sum\limits_{d_1 \mid q^{\infty}}{\sum\limits_{\alpha \in G_1}{\sum\limits_{d_2 \mid q^{\infty}}{\sum\limits_{\beta \in W_{\alpha,d_1,d_2}}{Q_2(F_{\beta,d_1,d_2},\xi,\mathcal{R}_{\beta,d_1,d_2})}}}}
\end{equation}
où nous avons posé
\begin{align*}
F_{\beta,d_1,d_2}(m,n)&:=\frac{F(U_{\beta,d_1,d_2}(m,n))}{d_1}{\rm ,}\\
\mathcal{R}_{\beta,d_1,d_2}&:=\{\bfx \in \mathbb{R}^2 \mbox{ : } U_{\beta,d_1,d_2}(\bfx) \in \mathcal{R}\}{\rm ,}\\
Q_2(F,\xi,\mathcal{R})&:=\sum\limits_{(m,n) \in \mathcal{D}_q\cap \mathcal{R}(\xi)  }{r_3(F(m,n))}
\end{align*}
où $\mathcal{D}_q$ est défini en \eqref{Dq}.

\subsection{Estimation de la vitesse de convergence des sommes dans la formule \eqref{eq Q1}}
Afin d'exploiter la formule \eqref{eq Q1} dans la démonstration du Théorème \ref{theo 1}, nous avons besoin d'une estimation de la vitesse de convergence des sommes sur $d_1$ et sur $d_2$. Pour cela, nous énonçons les lemmes suivants.

\begin{lemme}
\label{chinois}
Soient $d_2\mid q^{\infty}$ et $\alpha \in G_1$. Pour tout $d_1 \mid q^{\infty}$, le cardinal de $W_{\alpha,d_1,d_2}$ ne dépend que de $d_3:= (d_1,d_2q)$.
\end{lemme}

\begin{proof}
D'après le théorème chinois, il existe un couple $(u,v) \in \mathbb{Z}^2$ tel que 
$$d_1u+d_2qv=d_3{\rm .}$$
Il est par ailleurs possible de choisir $u$ tel que $(u,d_2q)=1$. Un tel $u$ fournit alors une bijection entre $W_{\alpha,d_1,d_2}$ et $W_{\alpha,d_3,d_2}$, d'où le résultat.
\end{proof}

\begin{lemme}
\label{triv}
Soit $d_2 \mid q^{\infty}$ et $d_3 \mid d_2q$. Pour tout $\alpha>0$, nous avons
$$\sum\limits_{\substack{d_2 \mid q^{\infty} \\ (d_2,d_1q)=d_3}}{\frac{1}{d_2^{\alpha}}}\ll_{q,\alpha} \frac{1}{d_3^{\alpha}}{\rm .}$$
\end{lemme}
\begin{lemme}
\label{conv}
La contribution des entiers $d_2\geqslant \log(\xi)^{\frac{5\log p_{\omega(q)}}{2\log p_1}}$ à la somme \eqref{eq Q1} peut être incluse dans le terme d'erreur de \eqref{1}.
\end{lemme}
\begin{proof}
Notons dans un premier temps que pour $\alpha \in G_1$, $d_2 \mid q^{\infty}$ et $d_3 \mid d_2q$ fixés, pour $\beta \in W_{\alpha,d_3,d_2}$, il existe $\varphi(d_2q/d_3)$ éléments $\bfx=(d_3m_1,n) \in (\mathbb{Z}/(d_2q)\mathbb{Z})^2$ tels que $n\equiv m_1\beta \bmod (d_1q)$ et $(m_1,d_2q/d_3)=1$. Ces éléments vérifient alors l'équation $F(\bfx)\equiv m_1^3\alpha d_2\bmod (d_2q)$, en particulier, ils vérifient $F(\bfx)\equiv 0\bmod d_2$. Cela fournit l'inégalité
$$\sum\limits_{d_3 \mid d_2q}{\varphi\Big(\frac{d_2q}{d_3}\Big)|W_{\alpha,d_3,d_2}|}\leqslant q^2 \varrho_F^+(d_2)\ll_F 1{\rm .}$$
Ainsi, 
\begin{equation}
\label{W}
\sum\limits_{d_3 \mid d_2q}{\frac{|W_{\alpha,d_3,d_2}|}{d_3}}\leqslant \frac{q^2\varrho_F^+(d_2)}{\phi(q)d_2}\ll_{F,q} 1{\rm .}
\end{equation}
 
Pour conclure la démonstration, nous notons que nous avons nécessairement $d_2\leqslant \xi^{O(1)}$, ce qui permet de dominer toutes les valuations p-adiques de $d_2$ par $\log \xi$. Lorsque $\xi/d_2\leqslant \xi^{\varepsilon/3}$, une majoration triviale fournit
$$\sum\limits_{\bfx \in \mathbb{Z}^2\cap \mathcal{R}_{\beta,d_1,d_2}(\xi)}{\tau_3(F_{\beta,d_1,d_2}(\bfx))} \ll \frac{(\sigma\xi)^{1+\varepsilon}}{d_1}$$
ce qui est un terme d'erreur acceptable pour \eqref{1} d'après \eqref{W}, le Lemme \ref{triv} et le Lemme \ref{chinois}. Dans le cas contraire, nous sommes en mesure d'appliquer le théorème 4 de \cite{He} pour évaluer la sommation relative à $m$ de la somme ci-dessus. Il vient 
$$\sum\limits_{\bfx \in \mathbb{Z}^2\cap \mathcal{R}_{\beta,d_1,d_2}(\xi)}{\tau_3(F_{\beta,d_1,d_2}(\bfx))} \ll \frac{||F||^{\varepsilon}\sigma^2\xi^2 \log(\xi)^2}{(d_1d_2)^{1-\varepsilon}}{\rm .}$$
En reportant dans \eqref{eq Q1}, nous obtenons que la somme portant sur les entiers $d_2\geqslant \log(\xi)^{\frac{5\log p_{\omega(q)}}{2\log p_1}}$ est
$$\ll \sum\limits_{\substack{d_2 \mid q^{\infty}\\d_2\geqslant (\log \xi)^B}}{\sum\limits_{\alpha \in G_1}{\sum\limits_{d_1 \mid q^{\infty}}{\frac{|W_{\alpha,d_1,d_2}|}{(d_1d_2)^{1-\varepsilon}}}}}{\rm .}$$
Nous utilisons ensuite le Lemme $\ref{triv}$ pour majorer la somme sur $d_1$. Ainsi, la somme qui nous intéresse est
$$\ll_q \sum\limits_{\substack{d_2 \mid q^{\infty}\\d_2\geqslant (\log \xi)^A}}{\sum\limits_{\alpha \in G_1}{\sum\limits_{d_3 \mid d_2q}{\frac{|W_{\alpha,d_3,d_2}|}{(d_2d_3)^{1-\varepsilon}}}}}{\rm ,}$$
où $A:=\frac{5\log p_{\omega(q)}}{2\log p_1}$. Nous majorons alors $d_3^{\varepsilon}$ par $(d_2q)^{\varepsilon}$ et nous utilisons la majoration \eqref{W}. La contribution à la somme \eqref{eq Q1} des entiers $d_2\geqslant  (\log \xi)^A$ est donc
$$\ll_q \sum\limits_{\substack{d_2 \mid q^{\infty}\\d_2\geqslant (\log \xi)^A}}{\frac{1}{d_1^{1-2\varepsilon}}}{\rm .}$$
Comme la condition sur $d_1$ implique que l'une des valuations p-adiques de $d_1$ est plus grande que $\frac{5\log_2(\xi)}{2\log p_1}$, la contribution à la somme \eqref{eq Q1} des entiers $d_1\geqslant  (\log \xi)^B$ est 
$$\ll_q ||F||^{\varepsilon}\sigma^2\xi^2 \log(\xi)^{-0,4}{\rm .}$$
\end{proof}

Il nous reste à déterminer la vitesse de convergence de la somme sur $d_1$ dans la formule \eqref{eq Q1}. Pour cela, nous travaillons avec la quantité suivante
\begin{equation}
\label{eq Q1'}
Q_3(F,\xi,\mathcal{R}):=\sum\limits_{d_1 \mid q^{\infty}}{\sum\limits_{\substack{d_2 \mid q^{\infty}\\ d_1\leqslant (\log \xi)^A}}{\sum\limits_{\alpha \in G_1}{\sum\limits_{\beta \in W_{\alpha,d_1,d_2}}{Q_2(F_{\beta,d_1,d_2},\xi,\mathcal{R}_{\beta,d_1,d_2})}}}}
\end{equation} 
où 
$$A:= \frac{5\log p_{\omega(q)}}{2\log p_1}{\rm .}$$

\begin{lemme}
\label{conv1}
La contribution des entiers $d_1\geqslant \log(\xi)^{5}$ à la somme \eqref{eq Q1'} peut être incluse dans le terme d'erreur de \eqref{1}.
\end{lemme}
\begin{proof}
Lorsque $\xi/d_1\leqslant \xi^{\varepsilon}$, une majoration triviale fournit
$$\sum\limits_{\bfx \in \mathbb{Z}^2\cap \mathcal{R}_{\beta,d_1,d_2}(\xi)}{\tau_3(F_{\beta,d_1,d_2}(\bfx))} \ll \frac{\sigma^2\xi^{3/2+\varepsilon}}{d_1^{1/2}} {\rm .}$$
Nous verrons qu'il s'agit d'un terme d'erreur acceptable pour \eqref{1}. Dans le cas contraire, nous appliquons de nouveau le théorème 4 de \cite{He} pour évaluer cette somme. Nous obtenons
$$\sum\limits_{\bfx \in \mathbb{Z}^2\cap \mathcal{R}_{\beta,d_1,d_2}(\xi)}{\tau_3(F_{\beta,d_1,d_2}(\bfx))} \ll \frac{||F||^{\varepsilon}\sigma^2\xi^2(\log \xi)^2}{(d_1d_2)^{1-\varepsilon}} {\rm .}$$
Nous minorons  $d_1^{1-\varepsilon}$ par $d_1^{1/2}(\log \xi)^{B}$ où $B:=5/2-5\varepsilon$. En reportant dans la formule \eqref{eq Q1'}, nous obtenons que la contribution recherchée est 
\begin{align*}
\ll& \sum\limits_{\substack{d_1 \mid q^{\infty} \\ d_1\geqslant (\log \xi)^{2B}}}{\sum\limits_{\substack{d_2 \mid q^{\infty}\\ d_2\leqslant (\log \xi)^A}}{\sum\limits_{\alpha \in G_1}{\sum\limits_{\beta \in W_{\alpha,d_1,d_2}}{\frac{||F||^{\varepsilon}\sigma^2\xi^2(\log \xi)^2}{(\log \xi)^{B}d_1^{1/2}d_2^{1-\varepsilon}}}}}}\\
&+\sum\limits_{\substack{d_1 \mid q^{\infty} \\ d_1\geqslant (\log \xi)^{2B}}}{\sum\limits_{\substack{d_2 \mid q^{\infty}\\ d_2\leqslant (\log \xi)^A}}{\sum\limits_{\alpha \in G_1}{\sum\limits_{\beta \in W_{\alpha,d_1,d_2}}{\frac{||F||^{\varepsilon}\sigma^2\xi^{3/2+\varepsilon}}{d_1^{1/2}}}}}}{\rm .}
\end{align*}
Nous utilisons le Lemme \ref{triv} avec $\alpha=1/2$ afin d'obtenir, uniformément pour $d_2 \mid q^{\infty}$ et pour tout $d_3 \mid d_2q$,
$$\sum\limits_{\substack{d_1 \mid q^{\infty} \\ (d_1,d_2q)=d_3}}{\frac{1}{d_1^{1/2}}}\ll_q \frac{1}{d_3^{1/2}}{\rm .}$$
La contribution recherchée est donc
\begin{align*}
\ll_q & \frac{||F||^{\varepsilon}\sigma^2\xi^2(\log \xi)^2}{(\log \xi)^{B}}\sum\limits_{\substack{d_2 \mid q^{\infty}\\ d_2\leqslant (\log \xi)^A}}{\sum\limits_{d_3 \mid d_2q}{\sum\limits_{\alpha \in G_1}{\frac{|W_{\alpha,d_3,d_2}|}{d_3^{1/2}d_2^{1-\varepsilon}}}}}\\
&+||F||^{\varepsilon}\sigma^2\xi^{3/2+\varepsilon}\sum\limits_{\substack{d_2 \mid q^{\infty}\\ d_2\leqslant (\log \xi)^A}}{\sum\limits_{d_3 \mid d_2q}{\sum\limits_{\alpha \in G_1}{\frac{|W_{\alpha,d_3,d_2}|}{d_3^{1/2}}}}}\\
\ll_q&\frac{||F||^{\varepsilon}\sigma^2\xi^2(\log \xi)^2}{(\log \xi)^{B}}\sum\limits_{\substack{d_2 \mid q^{\infty}\\ d_2\leqslant (\log \xi)^A}}{\sum\limits_{d_3 \mid d_2q}{\sum\limits_{\alpha \in G_1}{\frac{|W_{\alpha,d_3,d_2}|}{d_3d_2^{1/2-\varepsilon}}}}}\\
&+||F||^{\varepsilon}\sigma^2\xi^{3/2+\varepsilon}\sum\limits_{\substack{d_2 \mid q^{\infty}\\ d_2\leqslant (\log \xi)^A}}{\sum\limits_{d_3 \mid d_2q}{\sum\limits_{\alpha \in G_1}{\frac{d_2^{1/2}|W_{\alpha,d_3,d_2}|}{d_3}}}}\\
\ll_q & \frac{||F||^{\varepsilon}\sigma^2\xi^2(\log \xi)^2}{(\log \xi)^{B}}+||F||^{\varepsilon}\sigma^2\xi^{3/2+\varepsilon}(\log \xi)^{3A/2}{\rm ,}
\end{align*}
ce qui est un terme d'erreur acceptable pour \eqref{1}.
\end{proof}
La proposition suivante nous permet d'estimer $Q_2(F_{\beta,d_1,d_2},\xi,\mathcal{R}_{\beta,d_1,d_2})$ lorsque $d_1$ et $d_2$ sont de tailles contrôlées.
\begin{prop}
\label{prop}
Soient $\varepsilon>0$, $\sigma>0$, $\xi>0$, $\kappa>0$, $\mathcal{R}$ un domaine de $\mathbb{R}^2$,\\ $J$ une forme binaire de degré $3$ irréductible sur $\mathbb{K}$, $F(\bfx)=J(M\bfx)/D \in~\mathbb{Z}[X,Y]$ avec $M \in M_2(\mathbb{Z})$ de déterminant non nul, $D \in \mathbb{N}^*$ et tel que $\chi(F(\bfx))=1$ pour tout $\bfx \in \mathcal{D}_q$ (défini en \eqref{Dq}). Lorsque les hypothèses (H1), (H2), (H3) et (H4) sont vérifiées et sous les conditions
$$\xi\geqslant ||M||^{\kappa}+\e^{\kappa D} \;\;\;\;\;1/\sqrt{\xi}\leqslant \sigma \leqslant \xi^{3/2}{\rm ,}\;\;\;\;\; 1/\sqrt{\xi}\leqslant \vartheta \leqslant \xi^{3/2}{\rm ,}$$
nous avons
\begin{equation}
\label{41}
Q_{2}(F,\xi,\mathcal{R})=K_1(F)L(1,\chi)L(1,\chi^2){\rm vol}(\mathcal{R})\xi^2+O\Big(\frac{||F||^{\varepsilon}(\sigma^2+\vartheta^2)\xi^2}{(\log \xi)^{\eta}}\Big)
\end{equation}
où $\eta$ est défini en \eqref{eta} et
$$K_1(F):=3\frac{\varphi(q)}{q}\prod\limits_{p\nmid q}{K_p(F)}{\rm .}$$
Les termes $K_p(F)$ sont définis en \eqref{K_p}.
\end{prop}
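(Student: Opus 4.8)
The plan is to expand the convolution $r_3=1*\chi*\chi^2$ as $r_3(n)=\sum_{d_1d_2\mid n}\chi(d_1)\chi^2(d_2)$ and to interchange the order of summation, which yields the exact identity
\[
Q_2(F,\xi,\mathcal{R})=\sum_{d_1,d_2\ge1}\chi(d_1)\chi^2(d_2)\,\bigl|\Lambda(d_1d_2,F)\cap\mathcal{D}_q\cap\mathcal{R}(\xi)\bigr|,
\]
the characters restricting the sum to $(d_1d_2,q)=1$. I then fix a truncation level $y$ (a suitable power of $\xi$, to be optimised at the very end) and split the pairs according to $d_1d_2\le y$ and $d_1d_2>y$. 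Throughout, the hypotheses $\xi\ge\|M\|^{\kappa}+\e^{\kappa D}$ and the size constraints on $\sigma,\vartheta$ are what guarantee that Corollaire~\ref{lemme 1}, Lemme~\ref{lemme B} and Lemme~\ref{theo B} are applicable.

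On the range $d_1d_2\le y$ I replace the lattice count by its expected value $\mathrm{vol}(\mathcal{R})\xi^2\frac{\varphi(q)\varrho_F^+(d_1d_2)}{qd_1^2d_2^2}$; the accumulated replacement error is exactly $\phi(\xi,y,F,\sigma,\vartheta)$ of \eqref{eq phi}, which Corollaire~\ref{lemme 1} bounds, so that for $y$ a suitable power of $\xi$ it stays within the error term of \eqref{41}. The surviving main sum is $\frac{\varphi(q)}{q}\mathrm{vol}(\mathcal{R})\xi^2\sum_{d_1,d_2}\chi(d_1)\chi^2(d_2)\varrho_F^+(d_1d_2)/(d_1^2d_2^2)$. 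A crucial warning is that this double series is only conditionally convergent (its generic term has size $(d_1d_2)^{-1}$), so its value must be read off analytically and cannot simply be assembled term by term; in particular the naive local-density evaluation gives the wrong constant.

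The heart of the proof, and the main obstacle, is the analysis of the oscillatory sums of $\chi(d_1)\chi^2(d_2)$ over large divisors: it is here that both the correct constant $K_1(F)$ and the dominant error are decided. Analytically, I pass to the Dirichlet series $\sum_n(\chi*\chi^2)(n)\varrho_F^+(n)n^{-s-1}$ and use its factorisation through $L_k(s,\chi)$ and $L_k(s,\chi^2)$ (the Proposition giving the developments in $h_F^+(\cdot;\chi)$); evaluating at $s=1$ and reassembling the local factors against $L(1,\chi)L(1,\chi^2)=\prod_p(1-\chi(p)/p)^{-1}(1-\chi^2(p)/p)^{-1}$ reconstitutes $\prod_{p\nmid q}K_p(F)$, the factor $3=[G:G_1]$ in $K_1(F)=3\frac{\varphi(q)}{q}\prod_{p\nmid q}K_p(F)$ reflecting the concentration of the mass of $r_3$ on the norms $\{\chi=1\}$ forced by $\chi(F)\equiv1$ on $\mathcal{D}_q$. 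The completion error of the conditionally convergent series is handled by the tail estimates \eqref{eq coro 1}--\eqref{eq coro 2} (after passing from $\varrho_F^+$ to $\varrho_F^*$). For the remaining fluctuation, interchanging summations reduces the large-divisor contribution to $\sum_{\bfx\in\mathcal{D}_q\cap\mathcal{R}(\xi)}\sum_{d_1d_2\mid F(\bfx),\,d_1d_2>y}\chi(d_1)\chi^2(d_2)$; organising the inner sum along unit boxes in $(\log d_1,\log d_2)$, each box-sum is dominated by $\Delta_3(F(\bfx),\chi,\chi^2)$, and a Cauchy--Schwarz step (inserting a weight $y_0^{\omega(\cdot)}$) reduces everything to the second moment $\sum_{\bfx}\Delta_3(F(\bfx),\chi,\chi^2)^2$. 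Since $\Delta_3(\cdot,\chi,\chi^2)^2\in\mathcal{M}(9)$, Lemme~\ref{theo B} with Lemme~\ref{lemme B} bounds this moment by $\|F\|^{\varepsilon}\sigma^2\xi^2\,E_J(\sigma\xi,\Delta_3(\cdot,\chi,\chi^2)^2)$, and, because $\varrho_F^-\in\mathcal{M}(3,\chi,c)$ by \eqref{eq 1}--\eqref{eq 3}, Théorème~\ref{theo L} is precisely the input that makes $E_J$ small, supplying the saving $(\log\xi)^{\rho}$ attached to the residual oscillation defect $\rho\approx0.218$ of $\chi$ and $\chi^2$.

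The delicate, decisive point is that this logarithmic saving must be played off against the losses incurred by the box decomposition and by Cauchy--Schwarz. Tracking these competing powers of $\log\xi$ and optimising both $y$ and the free parameter of Théorème~\ref{theo L} is what pins the final exponent down to the small value $\eta=0.0034$ in \eqref{41}; were $\rho=0$ (perfect cancellation) the saving would be of purely statistical size and no such restriction would appear, which is exactly why the quantitative strength of \cite{L} is needed here rather than merely Théorème~\ref{theo T}. Collecting the three pieces — the lattice discrepancy on $d_1d_2\le y$, the analytic determination of $K_1(F)$ together with the completion of the series, and the large-divisor oscillation bound — then yields the stated estimate.
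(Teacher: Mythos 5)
Your toolkit is the right one, but the decomposition you choose cannot deliver the stated result, and the gap sits exactly where you place ``the heart of the proof''. Splitting at a single threshold $d_1d_2\leqslant y$ versus $d_1d_2>y$ is fatal: the divisors range up to $|F(\bfx)|\leqslant \vartheta^3\xi^3$, while the discrepancy bound of the Corollaire forces $\sigma\xi\sqrt{y}$ to stay below $\xi^2(\log\xi)^{-\eta}$, hence $y$ far below $\vartheta^3\xi^3$; the complementary range $d_1d_2>y$ is therefore \emph{not} an error term --- it carries two thirds of the main term. Indeed the truncated series $\sum_{d_1d_2\leqslant y}\chi(d_1)\chi^2(d_2)\varrho_F^+(d_1d_2)(d_1d_2)^{-2}$ converges (via \eqref{eq coro 2}) to $\prod_{p\nmid q}\bigl(\sum_{\nu}\varrho_F^+(p^{\nu})p^{-2\nu}(\chi*\chi^2)(p^{\nu})\bigr)=L(1,\chi)L(1,\chi^2)\prod_{p\nmid q}K_p(F)$, i.e.\ to the constant \emph{without} the factor $3$; no evaluation of the Dirichlet series at $s=1$ changes this. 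The factor $3$, which you attribute heuristically to $[G:G_1]$ without deriving it, is obtained in the paper by the reflection $d_1\mapsto d_1'=F(\bfx)/(d_1d_2)$: the hypothesis $\chi(F(\bfx))=1$ turns $\chi(d_1)\chi^2(d_2)$ into $\chi^2(d_1')\chi(d_2)$ and converts the ranges where $d_1$ (resp.\ $d_2$) exceeds $z_4=\vartheta\xi(\log\xi)^3$ into small-divisor sums over the shrunken domains $\mathcal{R}_4(\xi,\cdot)$, each contributing another copy of the Euler product ($C_8=2\times(\dots)$ added to $C_1=1\times(\dots)$). Without this step your argument either outputs one third of the correct main term or, if you insist on bounding the whole large range by Cauchy--Schwarz and the $\Delta_3$ second moment, fails because that range is genuinely of size $\asymp\xi^2$.

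The second, related omission is that the paper's partition is two-dimensional: eight regions cut by $z_1,\dots,z_4$ around $\vartheta\xi$ in \emph{each} variable, with the $\Delta_3(\cdot,\chi,\chi^2)$ / Th\'eor\`eme~\ref{theo L} machinery applied only to the critical square $D_5=(z_2,z_3]^2$, the unique region where neither the direct lattice count nor the reflection is usable. Even there, Cauchy--Schwarz is not applied to the box sums alone but to a further split according to $\Omega_z(F(\bfx))\leqslant 3a\log_2\xi$ or not, with Rankin-type (param\'etrique) bounds on the counting functions $B_1,B_2$; it is the optimisation of $a$ and $\delta$ in that specific configuration --- not of your cutoff $y$ --- that pins down $\eta=0{,}0034$. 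Applying the second-moment bound to the entire complementary range, as you propose, loses both the main term and the numerical exponent.
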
 
\section{Démonstration de la Proposition \ref{prop} et du Théorème \ref{theo 1}}

\begin{proof}
Nous réécrivons dans un premier temps la formule \eqref{41} sous la forme suivante
\begin{equation}
\label{42}
Q_{2}(F,\xi,\mathcal{R})= \sum\limits_{\substack{1\leqslant d_1  \\ 1\leqslant d_2}}{\chi(d_1)\chi(d_2)\big|\Lambda(d_1d_2,F)\cap \mathcal{D}_q\cap\mathcal{R}(\xi)\big|} {\rm .}
\end{equation}

Pour estimer cette somme, nous mettons à profit la symétrie des couples $(d_1,d_2)$ tels que $d_1d_2\mid F(\bfx)$ autour des valeurs  $d_1=\vartheta \xi$ et $d_2=\vartheta \xi$ afin d'utiliser les Lemmes \ref{lemme 1} et \ref{lemme 2}. Cependant, la partition est plus difficile que celle réalisée dans le lemme 7.1 de \cite{B}. Pour la suite, nous posons 
\begin{align}
\label{z}
\begin{split}
z_1&:=\vartheta\xi(\log \xi)^{-4}{\rm ,} \\
z_2&:=\vartheta\xi(\log \xi)^{-2\delta}{\rm ,} \\
z_3&:=\vartheta\xi(\log \xi)^{\delta}{\rm ,} \\
z_4&:=\vartheta\xi(\log \xi)^{3} \\
\end{split}
\end{align}
et nous procédons à la partition suivante.

\setlength{\unitlength}{1.6cm}
\begin{picture}(6,6)
\linethickness{0.3mm}
\put(1,0){\line(0,1){6}}
\put(0,1){\line(1,0){6}}
\linethickness{0.15mm}
\multiput(3,1)(1,0){2}
{\line(0,1){3}}
\multiput(1,3)(0,1){2}
{\line(1,0){3}}
\put(5,1){\line(0,1){4}}
\put(1,5){\line(1,0){4}}
\put(2,4){\line(0,1){1}}
\put(4,2){\line(1,0){1}}
\put(2,0.9){$|$}
\put(0.9,2){$-$}
\put(1.9,0.5){$z_1$}
\put(2.9,0.5){$z_2$}
\put(3.9,0.5){$z_3$}
\put(4.9,0.5){$z_4$}
\put(0.5,2){$z_1$}
\put(0.5,3){$z_2$}
\put(0.5,4){$z_3$}
\put(0.5,5){$z_4$}
\put(2,2){$D_1$}
\put(3.3,2){$D_2$}
\put(4.3,1.5){$D_3$}
\put(2,3.5){$D_4$}
\put(3.3,3.5){$D_5$}
\put(1.3,4.5){$D_6$}
\put(4.3,4.3){$D_7$}
\put(3,5.5){$D_8$}
\end{picture}
\\ 

Nous notons $C_j$ la contribution à $Q_2$ des couples $(d_1,d_2)$ appartenant au domaine $D_j$.
Nous verrons que seuls les termes $C_1$ et $C_8$ contribuent au terme principal. Par ailleurs, nous verrons que la contribution $C_5$ ne relève pas du Lemme \ref{lemme 2} et de ses corollaires et nécessite un traitement particulier. \\ 

Pour estimer $C_1$, nous utilisons le Lemme \ref{lemme 2} avec $y_1=y_2=z_2$, nous obtenons ainsi
\begin{align*}
C_1=&\frac{\varphi(q)}{q}{\rm vol}(\mathcal{R})\xi^2\sum\limits_{\substack{d_1\leqslant z_2 \\ d_2\leqslant z_2}}{\chi(d_1)\chi^2(d_2)\frac{\varrho_F^+(d_1d_2)}{d_1^2d_2^2}}\\
&+O\Big(\frac{||F||^{\varepsilon}(\sigma^2+\vartheta^2)\xi^2\mathcal{L}(\sigma\xi)^{\sqrt{3}+\varepsilon}}{(\log\xi)^{2\delta}}\Big)
\end{align*}
L'estimation \eqref{eq coro 2} fournit alors 
\begin{align}
\label{zone 1}
\begin{split}
C_1=&\frac{\varphi(q)}{q}{\rm vol}(\mathcal{R})\xi^2\prod\limits_{p\nmid q}{\Big(\sum\limits_{\nu\geqslant 0}{\frac{\varrho_F^+(p^{\nu})}{p^{2\nu}}(\chi*\chi^2)(p^{\nu})}\Big)} \\
&+O\Big(\frac{||F||^{\varepsilon}(\sigma^2+\vartheta^2)\xi^2\mathcal{L}(\sigma\xi)^{\sqrt{3}+\varepsilon}}{(\log\xi)^{4\delta}}\Big){\rm .}
\end{split}
\end{align}

Pour estimer $C_8$, nous utilisons un changement de variable et le Corollaire~\ref{lemme 1}.

\begin{align*}
C_8=&\sum\limits_{\substack{z_4< d_1 \\ 1\leqslant d_2}}{\chi(d_1)\chi^2(d_2)\big|\Lambda(d_1d_2,F)\cap \mathcal{D}_q\cap\mathcal{R}(\xi)\big|}\\
&+\sum\limits_{\substack{1\leqslant d_1\\ z_4< d_2 }}{\chi(d_1)\chi^2(d_2)\big|\Lambda(d_1d_2,F)\cap \mathcal{D}_q\cap\mathcal{R}(\xi)\big|} \\
&-\sum\limits_{\substack{z_4< d_1\\ z_4< d_2 }}{\chi(d_1)\chi^2(d_2)\big|\Lambda(d_1d_2,F)\cap\mathcal{D}_q\cap\mathcal{R}(\xi)\big|}\\
\end{align*}
Nous exploitons alors l'égalité $\chi(F(\bfx))=1$ de la manière suivante : pour la première et la troisième somme, nous posons $d_1'=\frac{F(\bfx)}{d_1d_2}$, pour la deuxième, nous posons $d_2'=\frac{F(\bfx)}{d_1d_2}$. En posant
\begin{equation}
\label{Ri}
\mathcal{R}_i(\xi,d):=\{\bfx \in \mathcal{R}(\xi) \mbox{ : } d\leqslant F(\bfx)/z_i\}{\rm ,}
\end{equation}
nous obtenons
\begin{align*}
C_8=&\sum\limits_{d_1'd_2< \vartheta^3\xi^3/z_4}{\chi^2(d_1')\chi(d_2)\big|\Lambda(d_1'd_2,F)\cap \mathcal{D}_q\cap\mathcal{R}_4(\xi,d_1'd_2)\big|}\\
&+\sum\limits_{d_1d_2'< \vartheta^3\xi^3/z_4}{\chi^2(d_1)\chi(d_2')\big|\Lambda(d_1d_2',F)\cap \mathcal{D}_q\cap\mathcal{R}_4(\xi,d_1d_2')\big|} \\
&-\sum\limits_{\substack{d_1'd_2< \vartheta^3\xi^3/z_4\\ z_4< d_2 }}{\chi^2(d_1')\chi(d_2)\big|\Lambda(d_1'd_2,F)\cap \mathcal{D}_q\cap\mathcal{R}_4(\xi,d_1'd_2)\big|}{\rm .}
\end{align*}
Nous utilisons le Corollaire \ref{lemme 1} pour estimer ces différentes sommes en posant $y=\vartheta^3\xi^3/z_4$. 
\begin{align*}
C_8=&\frac{\varphi(q)}{q}\Big(\sum\limits_{d_1'd_2< \vartheta^3\xi^3/z_4}{{\rm vol}\big(\mathcal{R}_4(\xi,d_1'd_2)\big)\chi^2(d_1')\chi(d_2)\frac{\varrho_F^+(d_1'd_2)}{d_1'^2d_2^2}}\\
&+\sum\limits_{d_1d_2'< \vartheta^3\xi^3/z_4}{{\rm vol}\big(\mathcal{R}_4(\xi,d_1d_2')\big)\chi^2(d_1)\chi(d_2')\frac{\varrho_F^+(d_1d_2')}{d_1^2d_2'^2}} \\
&-\sum\limits_{\substack{d_1'd_2< \vartheta^3\xi^3/z_4\\ z_4\leqslant d_2\leqslant \vartheta^3\xi^3/z_4 }}{{\rm vol}\big(\mathcal{R}_4(\xi,d_1'd_2)\big)\chi^2(d_1')\chi(d_2)\frac{\varrho_F^+(d_1'd_2)}{d_1'^2d_2^2}}\Big)\\
&+O\Big(\frac{||F||^{\varepsilon}(\sigma^2+\vartheta^2)\xi^2}{(\log\xi)^{1/3}}\Big){\rm .}
\end{align*}
Notons que 
\begin{equation}
\label{RRi}
\mathcal{R}(\xi)\backslash\mathcal{R}_i(\xi,d)=\{\bfx \in \mathcal{R}(\xi) \mbox{ : }  F(\bfx)/z_i<d\}{\rm .}
\end{equation}
La majoration \eqref{eq coro 1} et une adaptation de l'estimation \eqref{rhod1} fournissent alors
\begin{align*}
C_8=&2\frac{\varphi(q)}{q}{\rm vol}(\mathcal{R})\xi^2\prod\limits_{p\nmid q}{\Big(\sum\limits_{\nu\geqslant 0}{\frac{\varrho_F^+(p^{\nu})}{p^{2\nu}}(\chi*\chi^2)(p^{\nu})}\Big)}\\
&+O\Big(\int_{\mathcal{R}(\xi)}{\frac{\log \xi}{\log^2(F(\bfx)/z_4^2+2)}{\rm d}\bfx}+\frac{||F||^{\varepsilon}(\sigma^2+\vartheta^2)\xi^2}{(\log\xi)^{1/3}}\Big){\rm .}
\end{align*}
Par ailleurs, nous avons
\begin{equation}
\label{majint}
\int_{\mathcal{R}(\xi)}{\frac{\log \xi}{\log^2(F(\bfx)/z_4^2+2)}{\rm d}\bfx}\ll \frac{||F||^{\varepsilon}\sigma^2\xi^2}{\log (\xi) \mbox{d\'et}(M)}{\rm ,}
\end{equation}
ainsi,
\begin{align}
\label{zone 8}
\begin{split}
C_8=&2\frac{\varphi(q)}{q}{\rm vol}(\mathcal{R})\xi^2\prod\limits_{p\nmid q}{\Big(\sum\limits_{\nu\geqslant 0}{\frac{\varrho_F^+(p^{\nu})}{p^{2\nu}}(\chi*\chi^2)(p^{\nu})}\Big)}\\
&+O\Big(\frac{||F||^{\varepsilon}(\sigma^2+\vartheta^2)\xi^2}{(\log\xi)^{1/3}}\Big){\rm .}
\end{split}
\end{align}
Pour estimer $C_2$, nous faisons appel au Lemme \ref{lemme 2}, avec $y_1=z_3$ et $y_2=z_2$ et à la majoration \eqref{eq coro 2} afin obtenir
\begin{align}
\label{zone 2}
C_2=&\frac{\varphi(q)}{q}{\rm vol}(\mathcal{R})\xi^2\sum\limits_{\substack{z_2< d_1\leqslant z_3 \\ d_2\leqslant z_2}}{\chi(d_1)\chi^2(d_2)\frac{\varrho_F^+(d_1d_2)}{d_1^2d_2^2}}\nonumber\\
&+ O\Big(\frac{||F||^{\varepsilon}(\sigma^2+\vartheta^2)\xi^2\mathcal{L}(\sigma \xi)^{\sqrt{3}+\varepsilon}}{(\log\xi)^{\delta}}\Big)\nonumber\\
=&O\Big(\frac{||F||^{\varepsilon}(\sigma^2+\vartheta^2)\xi^2\mathcal{L}(\sigma \xi)^{\sqrt{3}+\varepsilon}}{(\log\xi)^{\delta/2}}\Big){\rm .}
\end{align}

La contribution $C_4$ s'estime de manière analogue, en échangeant simplement les valeurs de $y_1$ et $y_2$. Ainsi,
\begin{equation}
\label{zone 4}
C_4=O\Big(\frac{||F||^{\varepsilon}(\sigma^2+\vartheta^2)\xi^2\mathcal{L}(\sigma \xi)^{\sqrt{3}+\varepsilon}}{(\log\xi)^{\delta/2}}\Big){\rm .}
\end{equation}

 Les contributions $C_3$ et $C_6$ s'estiment également de la même manière, en posant simplement $y_1=z_4$ et $y_2=z_1$ pour $C_3$, et l'inverse pour $C_6$. Nous obtenons
  \begin{equation}
\label{zone 3+6}
C_3+C_6=O\Big(\frac{||F||^{\varepsilon}(\sigma^2+\vartheta^2)\xi^2\mathcal{L}(\sigma \xi)^{\sqrt{3}+\varepsilon}}{\sqrt{\log \xi}}\Big){\rm .}
\end{equation}

Pour estimer la contribution $C_7$, nous écrivons
\begin{align*}
C_7=&\sum\limits_{\bfx\in \mathcal{D}_q \cap\mathcal{R}(\xi)}{\sum\limits_{\substack{d_1d_2\mid F(\bfx) \\ z_1< d_1\leqslant z_4 \\ z_3< d_2\leqslant z_4}}{\chi(d_1)\chi^2(d_2)}}\\
&+\sum\limits_{\bfx\in \mathcal{D}_q\cap \mathcal{R}(\xi)}{\sum\limits_{\substack{d_1d_2\mid F(\bfx) \\ z_3< d_1\leqslant z_4 \\ z_1< d_2\leqslant z_4}}{\chi(d_1)\chi^2(d_2)}}\\
&-\sum\limits_{\bfx\in \mathcal{D}_q\cap \mathcal{R}(\xi)}{\sum\limits_{\substack{d_1d_2\mid F(\bfx) \\ z_3< d_1\leqslant z_4 \\ z_3< d_2\leqslant z_4}}{\chi(d_1)\chi^2(d_2)}}\\
:=&C_{71}+C_{72}-C_{73}{\rm ,}
\end{align*}
où $z_1$, $z_3$ et $z_4$ sont définis en \eqref{z}. Les trois sommes $C_{71}$, $C_{72}$ et $C_{73}$ se traitant de la même manière, nous ne nous intéressons qu'à $C_{71}$. Nous effectuons alors le changement de variable $d_2'=F(\bfx)/(d_1d_2)$ afin d'obtenir

\begin{align*}
C_{71}=&\sum\limits_{\bfx\in \mathcal{D}_q\cap \mathcal{R}(\xi)}{\sum\limits_{\substack{d_1d_2'\mid F(\bfx) \\ z_1< d_1\leqslant z_4 \\ F(\bfx)/z_4\leqslant d_1d_2'< F(\bfx)/z_3}}{\!\!\!\!\!\!\!\!\!\!\!\!\!\chi^2(d_1)\chi(d_2')}}\\
=&\sum\limits_{\substack{z_1< d_1\leqslant z_4 \\  d_1d_2'< \vartheta^3\xi^3/z_3}}{\!\!\!\!\!\!\chi^2(d_1)\chi(d_2')\big|\Lambda(d_1d_2',F)\cap \mathcal{D}_q\cap \big(\mathcal{R}_3(\xi,d_1d_2')\backslash \mathcal{R}_4(\xi,d_1d_2')\big)\big|} \\
=&\sum\limits_{k\in I}{\!\!\sum\limits_{\substack{\e^k< d_1\leqslant \e^{k+1} \\  d_1d_2'< \vartheta^3\xi^3/z_3}}{\!\!\!\!\!\chi^2(d_1)\chi(d_2')\big|\Lambda(d_1d_2',F)\cap \mathcal{D}_q\cap\big(\mathcal{R}_3(\xi,d_1d_2')\backslash \mathcal{R}_4(\xi,d_1d_2')\big)\big|}}{\rm .}
\end{align*}
où $\mathcal{R}_i(\xi,d)$ est défini en \eqref{Ri} et $I:=\mathbb{Z}\cap [\log\xi-4\log_2\xi,\log\xi+3\log_2\xi]$. 
La somme extérieure comportant $O(\log_2\xi)$ termes, la majoration \eqref{maj coro} et l'égalité \eqref{RRi} fournissent
\begin{align*}
C_{71}\ll& \int_{\mathcal{R}(\xi)}{\sum\limits_{\substack{z_1< d_1\leqslant z_4 \\ F(\bfx)/z_4\leqslant d_1d_2'}}{\!\!\!\chi^2(d_1)\chi(d_2')\frac{\varrho_F^+(d_1d_2')}{d_1^2d_2'^2}}{\rm d}\bfx}\\
&+\frac{||F||^{\varepsilon}(\sigma^2+\vartheta^2)\xi^2\mathcal{L}(\sigma \xi)^{\sqrt{3}+\varepsilon}\log_2\xi}{(\log\xi)^{\delta/2}}{\rm .}
\end{align*}
Une adaptation de l'estimation \eqref{rhod1} fournit alors
\begin{align*}
C_{71}\ll&\int_{\mathcal{R}(\xi)}{\frac{\log_2 \xi}{\log^2(F(\bfx)/z_4^2+2)}{\rm d}\bfx}\\
&+\frac{||F||^{\varepsilon}(\sigma^2+\vartheta^2)\xi^2\mathcal{L}(\sigma \xi)^{\sqrt{3}+\varepsilon}\log_2\xi}{(\log\xi)^{\delta/2}}{\rm .}
\end{align*}
L'intégrale étant majorée par \eqref{majint}, nous obtenons
\begin{equation}
\label{zone 7}
C_{71}\ll \frac{||F||^{\varepsilon}(\sigma^2+\vartheta^2)\xi^2\mathcal{L}(\sigma \xi)^{\sqrt{3}+\varepsilon}\log_2\xi}{(\log\xi)^{\delta/2}}{\rm .}
\end{equation}

 Il nous reste à majorer $C_5$. Posons
 $$\Upsilon(\Xi,\xi,\vartheta,b):=\sum\limits_{\substack{\bfx \in  \mathcal{R}(\Xi) \\ (m,n)=1}}{\sum\limits_{\substack{d_1d_2\mid b^3F(\bfx) \\ (d_1,d_2) \in D_5}}{\chi(d_1)\chi^2(d_2)}}{\rm ,}$$
 de sorte que
 $$C_5=\sum\limits_{(b,q)=1}{\Upsilon(\xi/b,\xi,\vartheta,b)}{\rm .}$$
 Notons la majoration triviale
 $$\Upsilon(\Xi,\xi,\vartheta,b)\leqslant \tau_3(b^3)\sum\limits_{\bfx\in \mathbb{Z}^2\cap \mathcal{R}(\Xi)}{\tau_3(F(\bfx))}\ll_{\varepsilon}\tau_3(b^3)||F||^{\varepsilon}\sigma^2\Xi^2(\log \Xi)^2$$
 qui résulte des Lemmes \ref{theo B} et \ref{lemme B}. Cette majoration fournit l'estimation suivante
 \begin{equation}
 \label{Upsilon}
 C_5=\sum\limits_{\substack{b\leqslant L \\(b,q)=1}}{\Upsilon(\xi/b,\xi,\vartheta,b)}+O_{\varepsilon}\Big(\frac{||F||^{\varepsilon}\sigma^2\xi^2}{\sqrt{\log \xi}}\Big) {\rm ,}
 \end{equation}
 où nous avons posé $L:=(\log \xi)^5$. Déterminons une majoration de $\Upsilon(\Xi,\xi,\vartheta,b)$ lorsque $\xi/L\leqslant \Xi\leqslant \xi$ et $b\leqslant L$. 
\begin{equation}
\Upsilon(\Xi,\xi,\vartheta,b)=\sum\limits_{\substack{\bfx\in \mathbb{Z}^2\cap\mathcal{R}(\Xi)\\(m,n)=1}}{\sum\limits_{\substack{d_1d_2\mid b^3F(\bfx)\\ z_2< d_1\leqslant z_3 \\ z_2< d_2\leqslant z_3 }}{\chi(d_1)\chi^2(d_2)}}{\rm .}
\end{equation}
Nous séparons tout d'abord la somme selon la valeur de $\Omega_z(F(\bfx))$, où 
$$\Omega_z(n):=\sum\limits_{\substack{p>z\\ p^{\nu}||n}}{\nu}$$
et où $z=z(\varepsilon)$ vérifie $\log 2/\log z>\varepsilon$. Ainsi, toute fonction sous multiplicative $f$ vérifiant $f\leqslant 2^{\Omega_z}$ vérifie les hypothèses du Lemme \ref{theo B} pour la forme $F$. Nous obtenons donc
\begin{align}
\label{44}
\begin{split}
\Upsilon(\Xi,\xi,\vartheta,b)= & \sum\limits_{\substack{\bfx\in  \mathbb{Z}^2\cap\mathcal{R}(\Xi)\\ (m,n)=1 \\\Omega_z(F(\bfx))\leqslant 3a\log_2\xi}}{\sum\limits_{\substack{d_1d_2\mid b^3F(\bfx)\\ z_2< d_1\leqslant z_3 \\ z_2< d_2\leqslant z_3 }}{\chi(d_1)\chi^2(d_2)}}\\
&+\sum\limits_{\substack{\bfx\in \mathbb{Z}^2\cap\mathcal{R}(\Xi)\\ (m,n)=1 \\ \Omega_z(F(\bfx))> 3a\log_2\xi}}{\sum\limits_{\substack{d_1d_2\mid b^3F(\bfx)\\ z_2< d_1\leqslant z_3 \\ z_2< d_2\leqslant z_3 }}{\chi(d_1)\chi^2(d_2)}}
\end{split}
\end{align}
où $a$ est un paramètre que nous déterminerons par la suite. En observant que la somme intérieure est $\ll \tau_3(b^3)\Delta_3(F(\bfx),\chi)(\log_2\xi)^{2}$, nous appliquons l'inégalité de Cauchy Schwarz à chacun des deux membres de droite de \eqref{44}. Il vient
\begin{align}
\label{45}
\begin{split}
\Upsilon(\Xi,\xi,\vartheta,b)\ll & \tau_3(b^3)(\log_2\xi)^2B_1(\Xi,\xi,\vartheta,b,\delta,a)^{1/2}\Big(\!\!\!\!\sum\limits_{\substack{\bfx\in\mathbb{Z}^2\cap \mathcal{R}(\Xi) \\\Omega_z(F(\bfx))\leqslant 3a\log_2\xi}}{\!\!\!\!\!\!\Delta_3^2(F(\bfx),\chi,\chi^2)}\Big)^{1/2} \\
&+ \tau_3(b^3)(\log_2\xi)^2B_2(\Xi,\xi,\vartheta,b,\delta,a)^{1/2}\Big(\!\!\!\!\!\!\!\!\sum\limits_{\substack{\bfx\in \mathbb{Z}^2\cap \mathcal{R}(\Xi)\\\Omega_z(F(\bfx))> 3a\log_2\xi}}{\!\!\!\!\!\!\!\!\Delta_3^2(F(\bfx),\chi,\chi^2)}\Big)^{1/2} \\
\end{split}
\end{align}
où nous avons posé 
$$B_1(\Xi,\xi,\vartheta,b,\delta,a):=\sum\limits_{\substack{\bfx\in \mathbb{Z}^2\cap\mathcal{R}(\Xi) \\ (m,n)=1 \\ \exists d_1d_2\mid b^3F(\bfx) \\ z_2< d_1,d_2\leqslant z_3 \\ \Omega_z(F(\bfx))\leqslant 3a\log_2\xi}}{1}$$
et
$$B_2(\Xi,\xi,\vartheta,b,\delta,a):=\sum\limits_{\substack{\bfx\in \mathbb{Z}^2\cap\mathcal{R}(\Xi) \\(m,n)=1 \\ \exists d_1d_2\mid b^3F(\bfx) \\ z_2< d_1,d_2\leqslant z_3 \\ \Omega_z(F(\bfx))> 3a\log_2\xi}}{1}{\rm .}$$

Nous commençons par majorer la contribution du second terme du membre de droite de \eqref{45}. Pour cela, nous devons estimer $B_2(\Xi,\xi,\vartheta,b,\delta,a)$. La méthode paramétrique fournit, pour tout $y\geqslant 1$,
\begin{align*}
B_2(\Xi,\xi,\vartheta,b,\delta,a)& \leqslant \sum\limits_{\substack{\bfx\in \mathbb{Z}^2\cap\mathcal{R}(\Xi)\\ (m,n)=1 \\ \exists d_1d_2\mid b^3F(\bfx) \\ z_2< d_1,d_2\leqslant z_3 }}{y^{\Omega_z(F(\bfx))-3a\log_2\xi}} \\
&\leqslant \sum\limits_{\substack{\bfx \in \mathbb{Z}^2 \\||\bfx||\leqslant \sigma \Xi}}{y^{\Omega_z(F(\bfx))-3a\log_2\xi}}{\rm .}
\end{align*}
En choisissant $y=3a$, ce qui n'est légitime que si $3a\geqslant 1$, nous obtenons, en vertu du Lemme \ref{theo B}
\begin{equation}
\label{48}
B_2(\Xi,\xi,\vartheta,b,\delta,a)\ll \frac{||F||^{\varepsilon}\sigma^2 \Xi^2}{(\log\xi)^{Q(3a)}}
\end{equation}
où la fonction $Q$ est définie par 
\begin{align*}
Q&:&&\mathbb{R}_+^* &&\longrightarrow &&\mathbb{R} \\
& && x &&\longrightarrow && \int_1^x{\log (t) {\rm d}t} {\rm .}
\end{align*}
Ainsi
$$Q(x)=x\log x-x+1{\rm .}$$
Nous estimons à présent
$$\sum\limits_{\substack{\bfx\in \mathbb{Z}^2\cap\mathcal{R}(\Xi) \\ \Omega_z(F(\bfx))> 3a\log_2\xi}}{\Delta_3^2(F(\bfx),\chi,\chi^2)}{\rm .}$$
Pour cela, nous utilisons les Lemmes \ref{theo B}, \ref{lemme B}, ainsi que le Théorème \ref{theo L}  afin d'obtenir
\begin{align}
\label{49}
\begin{split}
\sum\limits_{\substack{\bfx\in \mathbb{Z}^2\cap\mathcal{R}(\Xi) \\ \Omega_z(F(\bfx))> 3a\log_2\xi}}{\Delta_3^2(F(\bfx),\chi,\chi^2)}&\leqslant  \sum\limits_{\bfx\in \mathbb{Z}^2\cap\mathcal{R}(\Xi)}{\Delta_3^2(F(\bfx),\chi,\chi^2)} \\
&\ll ||F||^{\varepsilon} \sigma^2 \Xi^2(\log\xi)^{\rho}\mathcal{L}(\xi)^{\alpha}
\end{split}
\end{align}
En combinant \eqref{48} et \eqref{49}, nous obtenons que le second terme du membre de droite de \eqref{45} est 
\begin{equation}
\label{50}
\ll \tau_3(b^3)(\log_2\xi)^2||F||^{\varepsilon}\sigma^2\Xi^2 (\log\xi)^{\rho-Q(3a)}\mathcal{L}(\xi)^{\alpha/2} {\rm .}
\end{equation}

Nous estimons à présent la contribution du premier terme du membre de droite de \eqref{45}. Majorons dans un premier temps
$$\sum\limits_{\substack{\bfx\in \mathbb{Z}^2\cap\mathcal{R}(\Xi) \\\Omega_z(F(\bfx))\leqslant 3a\log_2\xi}}{\Delta_3^2(F(\bfx),\chi,\chi^2)} {\rm .}$$
Pour cela, nous utilisons à nouveau la méthode paramétrique.
$$\sum\limits_{\substack{\bfx\in \mathbb{Z}^2\cap\mathcal{R}(\Xi) \\\Omega_z(F(\bfx))\leqslant 3a\log_2\xi}}{\Delta_3^2(F(\bfx),\chi,\chi^2)}  \leqslant \sum\limits_{\bfx\in \mathbb{Z}^2\cap\mathcal{R}(\Xi)}{y^{\Omega_z(F(\bfx))-3a\log_2\xi}\Delta_3^2(F(\bfx),\chi,\chi^2)}$$
valable pour tout $y\leqslant 1$. En appliquant les Lemmes \ref{theo B}, \ref{lemme B} et le Théorème \ref{theo L} avec $y=1/(1+\rho)$, nous obtenons
\begin{equation}
\label{51}
\sum\limits_{\substack{\bfx\in \mathbb{Z}^2\cap\mathcal{R}(\Xi) \\\Omega_z(F(\bfx)) \leqslant 3a\log_2\xi}}\ll ||F||^{\varepsilon}\sigma^2\Xi^2 (\log\xi)^{3a\log(\rho+1)-\rho/(\rho+1)}\mathcal{L}(\xi)^{\alpha} {\rm .}
\end{equation} 

Il ne nous reste plus qu'à estimer $B_1(\Xi,\xi,\vartheta,b,\delta,a)$. Nous séparons pour cela la somme en trois. Nous désignons ainsi par $B_{1j} (1\leqslant j \leqslant 3)$ les contributions à $B_1(\xi,\delta, a)$ des vecteurs $\bfx$ et des diviseurs $d_1$, $d_2$ satisfaisant respectivement les conditions
\begin{align*}
(B_{11})& && \Omega(d_1d_2) \leqslant 2a\log_2\xi \\
(B_{12})& && F(\bfx)\leqslant \vartheta^3\Xi^3/(\log \xi)^{3/(2\kappa)} \\
(B_{13}) & &&\Omega(d_1d_2) > 2a\log_2\xi{\rm ,} && F(\bfx)>\vartheta^3\Xi^3/(\log \xi)^{3/(2\kappa)}
\end{align*} 

Pour estimer $B_{11}$, observons dans un premier temps que l'existence d'un couple $(d_1,d_2)$ tel que $d_1d_2\mid b^3F(\bfx)$ avec $z_2< d_i\leqslant z_3$ implique l'existence d'un couple $(d_1',d_2')$ tel que $d_1'd_2'\mid F(\bfx)$ avec $z_2/b^3\leqslant d_i'\leqslant z_3$. Dans un second temps, nous utilisons la méthode paramétrique. Pour $y\leqslant 1$, nous avons, d'après le lemme 5.1 de \cite{B}, les Lemmes \ref{theo B} et \ref{lemme B}
\begin{align*}
B_{11}&\leqslant \sum\limits_{z_2/b^3< d_1,d_2\leqslant z_3}{y^{\Omega_z(d_1d_2)-2a\log_2\xi}\sum\limits_{\substack{\bfx \in \Lambda(d_1d_2,F)\cap \mathcal{R}(\Xi)  \\ (m,n)=1}}{1}} \\
&\ll (\log \xi)^{-2a \log y}\sum\limits_{z_2/b^3< d_1,d_2\leqslant z_3}{y^{\Omega_z(d_1d_2)}\frac{\rho_F^+(d_1d_2)}{d_1d_2}\Big(\frac{\sigma^2\Xi^2}{d_1d_2}+1\Big)} \\
&\ll ||F||^{\varepsilon}(\sigma^2\Xi^2+z_3^2)(\log_2\xi)^2(\log \xi)^{-2a\log y +2y-2}
\end{align*}
Pour le choix de $y=a$, qui n'est valable que pour $a\leqslant 1$, nous obtenons, en reportant dans la majoration ci-dessus la valeur de $z_3$
\begin{equation}
\label{52}
B_{11} \ll ||F||^{\varepsilon}(\sigma^2+\vartheta^2)\xi^2(\log_2\xi)^2(\log \xi)^{2\delta-2Q(a)}
\end{equation}
Le lemme 3.4 de \cite{D} fournit la majoration suivante.
$${\rm vol}\{\bfx \in \mathcal{R}\;:\; |F(\bfx)|/\vartheta^3\leqslant \beta\}\ll \vartheta^2\beta^{2/3}\sqrt{D}{\rm .} $$
En l'appliquant à $\beta=(\log \xi)^{-3/(2\kappa)}$,  nous obtenons

\begin{equation}
\label{53}
B_{12} \ll ||F||^{\varepsilon}\vartheta^2\Xi^2/\sqrt{\log \xi} {\rm .}
\end{equation}

Il nous reste à estimer $B_{13}$. Pour cela, nous utilisons le fait que puisque $\Omega_z(d_1d_2)>2a \log_2 \xi$, l'un des deux, que nous noterons $d_i$, vérifie $\Omega_z(d_i)>~~a \log_2 \xi$, notons $d_j$ l'autre diviseur et $d_k=b^3F(\bfx)/d_id_j$. Nous avons alors
$$\Omega_z(d_jd_k)=\Omega(b^3F(\bfx)/d_i)< \Omega_z(b^3)+2a\log_2 \xi {\rm .}$$
et
$$\vartheta^3\Xi^3/\{z_3^2(\log \xi)^{3/(2\kappa)+2\delta}\}< d_k< b^3\vartheta^3\Xi^3/z_2^2 {\rm .}$$
Nous faisons de nouveau appel au lemme 5.1 de \cite{B}, au Lemme \ref{theo B} et au Lemme \ref{lemme B} pour majorer $B_{13}$, nous obtenons
\begin{align*}
B_{13}&\leqslant &&\!\!\!\!\!\!\!\!\!\!\!\!\sum\limits_{\substack{z_2\leqslant d_1\leqslant z_3 \\ \vartheta^3\Xi^3/\{z_3^2(\log \xi)^{3/(2\kappa)}\}\leqslant d_2\leqslant b^3\vartheta^3\Xi^3/z_2^2 }}{y^{\Omega_z(d_1d_2)-2a\log_2\xi-\Omega_z(b^3)}\sum\limits_{\substack{\bfx \in \Lambda(d_1d_2,F)\cap \mathcal{R}(\Xi) \\ (m,n)=1}}{1}} \\
&\ll && y^{-\Omega_z(b^3)}(\log \xi)^{-2a \log y}\!\!\!\!\!\!\!\!\!\!\!\!\!\!\!\!\!\!\!\!\!\!\!\!\sum\limits_{\substack{z_2\leqslant d_1\leqslant z_3\\ \vartheta^3\Xi^3/\{z_3^2(\log \xi)^{3/(2\kappa)}\}\leqslant d_2\leqslant b^3\vartheta^3\Xi^3/z_2^2}}{\!\!\!\!\!\!\!\!\!\!\!\!y^{\Omega_z(d_1d_2)}\frac{\rho_F^+(d_1d_2)}{d_1d_2}\Big(\frac{\sigma^2\Xi^2}{d_1d_2}+1\Big)} \\
&\ll &&y^{-\Omega_z(b^3)}||F||^{\varepsilon}\Big(\sigma^2\Xi^2+\vartheta^3b^3\frac{\Xi^3z_3}{z_2^2}\Big)(\log_2\xi)^2(\log \xi)^{-2a\log y +2y-2}
\end{align*}
En choisissant $y=a$ et en reportant les valeurs de $z_2$ et $z_3$, nous obtenons
\begin{equation}
\label{54}
B_{13} \ll a^{-\Omega_z(b^3)}||F||^{\varepsilon}\Big(\sigma^2\Xi^2+\vartheta^2b^3\frac{\Xi^3}{\xi}\Big)(\log_2\xi)^2(\log \xi)^{5\delta-2Q(a)}{\rm .}
\end{equation}
Les équations \eqref{52}, \eqref{53} et \eqref{54} fournissent
\begin{equation}
\label{55}
B_1(\Xi,\xi,\vartheta,b,\delta,a)\ll a^{-\Omega_z(b^3)}||F||^{\varepsilon}\Big(\sigma^2\Xi^2+\vartheta^2b^3\frac{\Xi^3}{\xi}\Big)(\log_2 \xi)^2(\log \xi)^{5\delta-2Q(a)}{\rm .}
\end{equation}
Les équations \eqref{51} et \eqref{55} montrent alors que le premier terme du membre de droite de \eqref{45} est 
\begin{align}
\begin{split}
\label{56}
\ll& \tau_3(b^3)a^{-\Omega_z(b^3)/2}(\log_2 \xi)^{3}||F||^{\varepsilon}\Big(\sigma^2\Xi^2+\vartheta^2b^3\frac{\Xi^3}{\xi}\Big)^{1/2}\\
&\sigma\Xi(\log \xi)^{1/2(3a\log(\rho+1)-\rho/(\rho+1)-2Q(a)+5\delta)}\mathcal{L}(\xi)^{\alpha/2}{\rm .}
\end{split}
\end{align}
En reportant les équations \eqref{50} et \eqref{56} dans l'équation \eqref{45}, nous obtenons
\begin{align}
\label{57}
\begin{split}
\Upsilon(\Xi,\xi,\vartheta,b)\ll &\tau_3(b^3)a^{-\Omega_z(b^3)/2}||F||^{\varepsilon}\Big(\sigma^2\Xi^2+\vartheta^2b^3\frac{\Xi^3}{\xi}\Big)^{1/2}\\
&\sigma\Xi\mathcal{L}(\xi)^{\alpha/2}(\log_2 \xi)^3 \Big( (\log \xi)^{\gamma_1}+(\log \xi)^{\gamma_2}\Big){\rm .}
\end{split}
\end{align}
En reportant dans \eqref{Upsilon}, nous obtenons
\begin{align}
\begin{split}
&\sum\limits_{\bfx\in \mathcal{D}_q\cap \mathcal{R}(\xi)}{\sum\limits_{\substack{d_1d_2\mid F(\bfx)\\ (d_1,d_2) \in 5}}{\chi(d_1)\chi^2(d_2)}}\\
\ll &||F||^{\varepsilon}(\sigma^2+\vartheta^2)\xi^2\mathcal{L}(\xi)^{\alpha/2}(\log_2 \xi)^{O_a(1)} \Big( (\log \xi)^{\gamma_1}+(\log \xi)^{\gamma_2}\Big){\rm .}
\end{split}
\end{align}
où nous avons posé 
$$\gamma_1=\frac{1}{2}\Big(3a\log(\rho+1)-\frac{\rho}{\rho+1}-2Q(a)+5\delta\Big)$$ 
et $$\gamma_2=\frac{1}{2}\big(\rho-Q(3a)\big){\rm .}$$ 
Pour les choix de $a=\frac{1,742}{3}$, qui est compatible avec les conditions $a\leqslant 1$ et $3a\geqslant 1$,  et $\delta=0,0069$, nous obtenons
\begin{equation}
\label{zone 5}
\sum\limits_{\bfx\in \mathcal{D}_q\cap \mathcal{R}(\xi)}{\sum\limits_{\substack{d_1d_2\mid F(\bfx)\\ (d_1,d_2) \in 5}}{\chi(d_1)\chi^2(d_2)}} \ll ||F||^{\varepsilon}(\sigma^2+\vartheta^2)\xi^2\mathcal{L}(\xi)^{\alpha/2}(\log_2 \xi)^{O(1)} (\log \xi)^{-\eta}
\end{equation}
où $\eta$ est défini en \eqref{eta}. \\
\goodbreak

La proposition \ref{prop} s'obtient en reportant les estimations \eqref{zone 1}, \eqref{zone 8}, \eqref{zone 2}, \eqref{zone 4}, \eqref{zone 3+6}, \eqref{zone 7} et \eqref{zone 5} dans la formule \eqref{42} avec le choix $\delta=0,0069$. \\

Nous utilisons la Proposition \ref{prop} pour estimer les termes $Q_2(F_{\beta,d_1,d_2},\xi,\mathcal{R}_{\beta,d_1,d_2})$ dans la formule \eqref{eq Q1} lorsque $d_1$ et $d_2$ sont majorés par une puissance de $\log \xi$. La contribution des couples $(d_1,d_2)$ complémentaires  est traitée par le Lemme~\ref{conv} et le Lemme \ref{conv1}. Par ailleurs, en reprenant les notations de la Proposition \ref{prop}, nous utilisons l'égalité suivante
$$\varrho_F^+(p)=\varrho_J^+(p){\rm ,}$$
valable pour tout $p$  tel que $(p,D\mbox{d\'{e}t}(M))=1$. Cela fournit
\begin{align}
\label{57bis}
\begin{split}
Q_1(F,\xi,\mathcal{R})=&\frac{1}{q}{\rm vol}(\mathcal{R})\xi^2L(1,\chi)L(1,\chi^2)K_1(F)\sum\limits_{d_1 \mid q^{\infty}}{\sum\limits_{\alpha \in G_1}{\sum\limits_{d_2 \mid q^{\infty}}\frac{|W_{\alpha,d_1,d_2}|}{d_1d_2}}}\\
&+O\Big(\frac{||F||^{\varepsilon}(\sigma^2+\vartheta^2)\xi^2}{(\log \xi)^{\eta}}\Big){\rm .}
\end{split}
\end{align}
En reportant cette estimation dans l'égalité \eqref{eq Q'}, nous obtenons finalement
\begin{align*}
Q(F,\xi,\mathcal{R})=&\frac{1}{q}{\rm vol}(\mathcal{R})\xi^2L(1,\chi)L(1,\chi^2)K_1(F)\sum\limits_{d\mid q^{\infty}}{\frac{1}{d^2}\sum\limits_{d_1 \mid q^{\infty}}{\sum\limits_{\alpha \in G_1}{\sum\limits_{d_2 \mid q^{\infty}}\frac{|W_{\alpha,d_1,d_2}|}{d_1d_2}}}}\\
&+O\Big(\frac{||F||^{\varepsilon}(\sigma^2+\vartheta^2)\xi^2}{(\log \xi)^{\eta}}\Big)\\
\end{align*}
Cela achève la démonstration du Théorème \ref{theo 1} et nous permet de déterminer $K(F)$.
\begin{align}
\begin{split}
\label{K(T)}
K(F)&=\frac{3\varphi(q)}{q^2}\prod\limits_{p\nmid q}{\Big(\sum\limits_{\nu\geqslant 0}{\frac{\varrho_F^+(p^{\nu})}{p^{2\nu}}(\chi*\chi^2)(p^{\nu})}\Big)}\sum\limits_{d\mid q^{\infty}}{\frac{1}{d^2}\sum\limits_{d_1 \mid q^{\infty}}{\sum\limits_{\alpha \in G_1}{\sum\limits_{d_2 \mid q^{\infty}}\frac{|W_{\alpha,d_1,d_2}|}{d_1d_2}}}}\\
&=\prod\limits_{p\nmid q}{K_p(F)}K_q(F){\rm ,}
\end{split}
\end{align}
avec
\begin{equation}
K_q(F):=\frac{3\varphi(q)}{q^2}\sum\limits_{d\mid q^{\infty}}{\frac{1}{d^2}\sum\limits_{d_1 \mid q^{\infty}}{\sum\limits_{\alpha \in G_1}{\sum\limits_{d_2 \mid q^{\infty}}\frac{|W_{\alpha,d_1,d_2}|}{d_1d_2}}}}{\rm .}
\end{equation}
\end{proof}
Afin de démontrer l'égalité \eqref{K_q} portant sur la constante $K_q(F)$, nous rappelons la définition des ensembles suivants.
$$\mathcal{E}:=\bigcup\limits_{\alpha \in G_1}\{n \in \mathbb{N}^*\mbox{ : } \exists d\mid q^{\infty}\mbox{, } n\equiv \alpha d \bmod dq\}$$
où $G_1={\rm Ker}(\chi)$ et pour $d \mid q^{\infty}$, nous notons 
$\mathcal{E}_d$ la projection de $\mathcal{E}$ sur $\mathbb{Z}/d\mathbb{Z}$, c'est-à-dire
$$\mathcal{E}_d=\bigcup\limits_{\alpha \in G_1}\{n \in \mathbb{Z}/d\mathbb{Z}\mbox{ : } \exists d_1\mid q^{\infty}\mbox{, } n\equiv \alpha d_1 \bmod (d_1q,d)\}{\rm .}$$
Nous pouvons alors énoncer le lemme suivant.
\begin{lemme}
\label{lemme K_q}
Nous avons 
$$K_q(F)=\lim\limits_{k\rightarrow \infty}\frac{3}{q^{2k}}\Big|\Big\{\bfx \in (\mathbb{Z}/q^k\mathbb{Z})^2\mbox{ : } F(\bfx) \in \mathcal{E}_{q^k}\Big\}\Big|{\rm .}$$
\end{lemme}
\begin{proof}
Nous écrivons 
\begin{align*}
&\frac{1}{q^{2k}}\Big|\Big\{\bfx \in (\mathbb{Z}/q^k\mathbb{Z})^2\mbox{ : } F(\bfx) \in \mathcal{E}_{q^k}\Big\}\Big|\\
=& \frac{1}{q^{2k}}\sum\limits_{d\mid q^k}{\Big|\Big\{\bfx \in (\mathbb{Z}/(q^k/d)\mathbb{Z})^2\mbox{ : } d^3F(\bfx) \in \mathcal{E}_{q^k} \mbox{, }(\bfx,q^k/d)=1\Big\}\Big|}{\rm .}
\end{align*}
Lorsque $d \mid q^{[k/4]}$, la condition $(\bfx ,q^k/d)=1$ est équivalente à $(\bfx,q)=1$. Le cas complémentaire peut être aisément vu comme un terme d'erreur. Ainsi 
\begin{align*}
&\frac{1}{q^{2k}}\Big|\Big\{\bfx \in (\mathbb{Z}/q^k\mathbb{Z})^2\mbox{ : } F(\bfx) \in \mathcal{E}_{q^k}\Big\}\Big|\\
=& \frac{1}{q^{2k}}\sum\limits_{d\mid q^{[k/4]}}{\Big|\Big\{\bfx \in (\mathbb{Z}/(q^k/d)\mathbb{Z})^2\mbox{ : } d^3F(\bfx) \in \mathcal{E}_{q^k} \mbox{, }(\bfx,q)=1\Big\}\Big|}+o(1)\\
= &\frac{1}{q^{2k}}\sum\limits_{d\mid q^{[k/4]}}{\sum\limits_{d_1 \mid q^k/d}|A_{d,d_1,k}|}+o(1)
\end{align*}
où
$$A_{d,d_1,k}:=\Big\{\bfx \in (\mathbb{Z}/(q^k/d)\mathbb{Z})^2\mbox{ : } d^3F(\bfx) \in \mathcal{E}_{q^k} \mbox{, }m=d_1m' \mbox{, }(m',q^k/dd_2)=1 \mbox{, }(d_1m',n,q)=1\Big\}{\rm .}$$
Lorsque $d_2 \mid q^{k-1}/d$, la condition $(m' ,q^k/dd_2)=1$ est équivalente à $(m',q)=~1$, nous pouvons en déduire que la condition $(d_1m',n,q)=1$ est équivalente à la condition $(n,d_1)=1$. Le cas complémentaire peut être aisément vu comme un terme d'erreur. Ainsi 
\begin{align*}
&\frac{1}{q^{2k}}\Big|\Big\{\bfx \in (\mathbb{Z}/q^k\mathbb{Z})^2\mbox{ : } F(\bfx) \in \mathcal{E}_{q^k}\Big\}\Big|\\
= &\frac{1}{q^{2k}}\sum\limits_{d\mid q^{[k/4]}}{\sum\limits_{d_1 \mid q^{k-1}/d}{|A_{d,d_1,k}|}}+o(1)
\end{align*}
où nous avons réécrit
$$A_{d,d_1,k}:=\Big\{\bfx \in (\mathbb{Z}/(q^k/d)\mathbb{Z})^2\mbox{ : } d^3F(\bfx) \in \mathcal{E}_{q^k} \mbox{, }m=d_1m' \mbox{, }(m',q)=1 \mbox{, }(d_1,n)=1\Big\}{\rm .}$$
Lorsque $d\mid q^{[k/4]}$, la condition $d^3F(\mathbf{x}) \in \mathcal{E}_{q^k}$ est équivalente à l'existence d'un entier $d_2\mid q^{\infty}$ et d'un $\alpha \in G_1$ tel que 
$$F(\mathbf{x})\equiv \alpha d_2 \bmod (d_2q,q^k/d^3){\rm .}$$
La contribution des entiers $d_2$ tels que $d_2\nmid q^{k-1}/d^3$ étant négligeable, nous pouvons écrire
 \begin{align*}
&\frac{1}{q^{2k}}\Big|\Big\{\bfx \in (\mathbb{Z}/q^k\mathbb{Z})^2\mbox{ : } F(\bfx) \in \mathcal{E}_{q^k}\Big\}\Big|\\
= &\frac{1}{q^{2k}}\sum\limits_{d\mid q^{[k/4]}}{\sum\limits_{d_1 \mid q^{k-1}/d}{\sum\limits_{d_2 \mid q^{k-1}/d^3}{|B_{d,d_1,d_2,k}|}}}+o(1)
\end{align*}
où 
$$B_{d,d_1,d_2,k}:=\Big\{\bfx \in (\mathbb{Z}/(q^k/d)\mathbb{Z})^2\mbox{ : } F(\bfx) \in E_{d_2} \mbox{, }m=d_1m' \mbox{, }(m',q)=1 \mbox{, }(n,d_1)=1\Big\}$$
avec
$$E_{d_2}:=\bigcup\limits_{\alpha \in G_1}\{n \in \mathbb{N}^*\mbox{ : } \mbox{, } n\equiv \alpha d_2 \bmod d_2q\}{\rm .}$$
Il existe $\varphi(q^k/dd_1)$ entiers $m$ dans  $\mathbb{Z}/(q^k/d)\mathbb{Z}$ s'écrivant $d_1m'$ avec $(m',q)=1$, et il existe $q^{k-1}/dd_1\sum\limits_{\alpha \in G_1}{|W_{\alpha,d_1,d_2}|}$ éléments $\beta$ de $\mathbb{Z}/(q^k/d)\mathbb{Z}$ tels que 
$$F(d_1,\beta) \equiv \alpha d_2 \bmod d_2 q$$
 pour un certain $\alpha \in G_1$avec $(d_1,\beta)=1$. La donnée d'un tel couple $(m',\beta)$ détermine de manière unique chaque élément de l'ensemble $B_{d,d_1,d_2,k}$ en posant $\bfx=(d_1m',m'\beta)\in (\mathbb{Z}/(q^k/d)\mathbb{Z})^2$. \\
 
 Cela fournit l'estimation suivante
\begin{align*}
&\frac{1}{q^{2k}}\Big|\Big\{\bfx \in (\mathbb{Z}/q^k\mathbb{Z})^2\mbox{ : } F(\bfx) \in \mathcal{E}_{q^k}\Big\}\Big|\\
=&\frac{1}{q^{2k}}\sum\limits_{d\mid q^{[k/4]}}{\sum\limits_{d_1 \mid q^{k-1}/d^3}{\sum\limits_{d_2 \mid q^{k-1}/d}{\sum\limits_{\alpha \in G_1}{\varphi\Big(\frac{q^k}{dd_2}\Big)\frac{q^{k-1}}{dd_1}|W_{\alpha,d_1,d_2}|}}}}+o(1)\\
= &\frac{\phi(q)}{q^2}\sum\limits_{d\mid q^{[k/4]}}{\sum\limits_{d_1 \mid q^{k-1}/d^3}{\sum\limits_{d_2 \mid q^{k-1}/d}{\sum\limits_{\alpha \in G}{\frac{|W_{\alpha,d_1,d_2}|}{d^2d_1d_2}}}}}+o(1){\rm .}\\
\end{align*}
Par passage à la limite, nous obtenons 
$$K_q(F)=\lim\limits_{k\rightarrow \infty}\frac{3}{q^{2k}}\Big|\Big\{\bfx \in (\mathbb{Z}/q^k\mathbb{Z})^2\mbox{ : } F(\bfx) \in \mathcal{E}_{q^k}\Big\}\Big|{\rm ,}$$
ce qui démontre l'égalité \eqref{K_q}.
\end{proof}

\section{Interprétation géométrique de $K(F)$}
Nous supposons dans toute cette partie que l'anneau des entiers $O_{\mathbb{K}}$ est principal. Notre objectif ici est de démontrer la dernière assertion du Théorème \ref{theo 1}.
\subsection{Interprétation géométrique de $K_q(F)$}

Nous notons $G_1={\rm Ker}(\chi)$ qui est un sous groupe de $(\mathbb{Z}/q\mathbb{Z})^{\times}$ d'indice $3$.\\

Nous utilisons l'égalité du Lemme \ref{lemme K_q} portant sur $K_q(F)$ afin de décomposer cette quantité en produit de facteurs non archimédiens. \\

Pour cela, nous devons fixer une $\mathbb{Z}$-base de
$O_{\mathbb{K}}$, $(\omega_1,\omega_2,\omega_3)$. Cela nous permet de définir un polynôme homogène à $3$ variables $P$ en posant
$$P(y,z,t)=N_{\mathbb{K}/\mathbb{Q}}(y\omega_1+z\omega_2+t\omega_3){\rm .}$$
\begin{prop}
\label{prop 1}
Soient $n\geqslant 1$ et $A\geqslant 1$ deux entiers tels que pour tout $p\mid q$, nous avons
$n\geqslant 1+v_p(A)$ et tels que la partie de $A$ qui est première avec $q$ appartienne à $G_1$. Si $O_{\mathbb{K}}$ est principal, nous avons
$$\Big|\Big\{(y,z,t) \in (\mathbb{Z}/q^n\mathbb{Z})^3\mbox{ : } P(y,z,t)=A\Big\}\Big|=3q^{2n}{\rm .}$$
\end{prop}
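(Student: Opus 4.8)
Le principe est de relire l'\'equation $P(y,z,t)=A$ comme une \'equation de norme dans l'anneau fini $O_{\mathbb{K}}/q^nO_{\mathbb{K}}$. La $\mathbb{Z}$-base $(\omega_1,\omega_2,\omega_3)$ identifie $(\mathbb{Z}/q^n\mathbb{Z})^3$ \`a $O_{\mathbb{K}}/q^nO_{\mathbb{K}}$, et sous cette identification $P(y,z,t)$ n'est autre que $N_{\mathbb{K}/\mathbb{Q}}(\alpha)\bmod q^n$ pour $\alpha=y\omega_1+z\omega_2+t\omega_3$. On cherche donc \`a \'evaluer
$$M:=\big|\{\alpha\in O_{\mathbb{K}}/q^nO_{\mathbb{K}}\;:\;N_{\mathbb{K}/\mathbb{Q}}(\alpha)\equiv A\bmod q^n\}\big|.$$
La norme \'etant multiplicative et compatible \`a la d\'ecomposition $O_{\mathbb{K}}/q^nO_{\mathbb{K}}\cong\prod_{p\mid q}O_{\mathbb{K}}/p^{nv_p(q)}O_{\mathbb{K}}$ fournie par le th\'eor\`eme chinois, on a $M=\prod_{p\mid q}M_p$, o\`u $M_p$ d\'enombre les classes locales $\alpha$ telles que $N\alpha\equiv A\bmod p^{m}$, avec $m:=nv_p(q)$.

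Le calcul local est le point central. Comme $p\mid q$ et que $q$ est le conducteur de $\chi$, le premier $p$ est totalement ramifi\'e dans $\mathbb{K}$ : localement $pO_{\mathbb{K}}=\mathfrak{p}^3$, l'extension $\mathbb{K}_{\mathfrak{p}}/\mathbb{Q}_p$ est cyclique de degr\'e $3$, de corps r\'esiduel $\mathbb{F}_p$, et $v_p(N\alpha)=v_{\mathfrak{p}}(\alpha)$. Posons $e:=v_p(A)$ ; l'hypoth\`ese $n\geq 1+v_p(A)$ donne $m>e$, donc la congruence $N\alpha\equiv A\bmod p^m$ force $v_{\mathfrak{p}}(\alpha)=e$. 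En \'ecrivant $\alpha=\pi^e\beta$ avec $\pi$ une uniformisante ($N\pi=\pm p$) et $\beta$ une unit\'e de $O_{\mathfrak{p}}/\mathfrak{p}^{3m-e}$, on se ram\`ene \`a compter les unit\'es $\beta$ v\'erifiant $N\beta\equiv c\bmod p^{m-e}$, o\`u $c$ est l'unit\'e $p$-adique associ\'ee \`a $A/p^e$.

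Intervient alors la structure du groupe des normes, d\'ecrite par la th\'eorie du corps de classes local : $[\mathbb{Q}_p^{\times}:N\mathbb{K}_{\mathfrak{p}}^{\times}]=3$, et comme $N\pi$ est une uniformisante de $\mathbb{Q}_p$, cet indice porte enti\`erement sur les unit\'es, soit $[\mathbb{Z}_p^{\times}:NO_{\mathfrak{p}}^{\times}]=3$, de conducteur $p^{v_p(q)}$. L'hypoth\`ese $n\geq 1+v_p(A)$ assure $m-e\geq v_p(q)$, si bien que $NO_{\mathfrak{p}}^{\times}$ se projette modulo $p^{m-e}$ sur un sous-groupe d'indice exactement $3$ de $(\mathbb{Z}/p^{m-e}\mathbb{Z})^{\times}$. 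La norme \'etant un morphisme sur les unit\'es, ses fibres non vides ont toutes le m\^eme cardinal ; comme $|(O_{\mathfrak{p}}/\mathfrak{p}^{3m-e})^{\times}|=p^{3m-e-1}(p-1)$ et que l'image est de cardinal $p^{m-e-1}(p-1)/3$, on obtient $M_p=3\,p^{2m}$ d\`es que $c$ est une norme locale. Cette derni\`ere condition d\'ecoule de $A_1\in G_1=\mathrm{Ker}(\chi)$ : en identifiant $\chi$ au produit des caract\`eres locaux et en invoquant le th\'eor\`eme de Hasse pour l'extension cyclique $\mathbb{K}/\mathbb{Q}$, la condition globale garantit la solubilit\'e requise.

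Il reste \`a regrouper les facteurs locaux. Le produit $\prod_{p\mid q}M_p$ restitue la puissance $q^{2n}=\prod_{p\mid q}p^{2m}$, chaque premier ramifi\'e apportant le facteur $3$ issu de l'indice du groupe des normes locales ; pour les corps de l'introduction, dont le conducteur n'a qu'un facteur premier, on obtient ainsi exactement $3q^{2n}$. L'obstacle principal sera le calcul local du groupe des normes (indice $3$, conducteur $p^{v_p(q)}$) et la gestion soigneuse du cas $p\mid A$ (valuation $e>0$) ainsi que de la pr\'ecision modulaire ; c'est \'egalement \`a ce stade que l'hypoth\`ese de principalit\'e de $O_{\mathbb{K}}$ joue un r\^ole d\'ecisif, en reliant $G_1$ aux normes globales et en assurant que la seule condition $A_1\in G_1$ suffit \`a la non-vacuit\'e du comptage.
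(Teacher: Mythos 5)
Votre strat\'egie co\"{\i}ncide pour l'essentiel avec celle du texte : relire le comptage comme une \'equation de norme dans $O_{\mathbb{K}}/q^nO_{\mathbb{K}}$, localiser par le th\'eor\`eme chinois, extraire la partie de valuation $v_p(A)$ au moyen d'une uniformisante, puis utiliser le fait que les fibres non vides d'un morphisme de groupes (la norme sur les unit\'es) ont toutes le m\^eme cardinal. Le calcul local, $|(O_{\mathfrak{p}}/\mathfrak{p}^{3m-e})^{\times}|$ divis\'e par le cardinal de l'image, soit $3p^{2m}$, est correct. La diff\'erence est que vous obtenez l'indice de l'image par la th\'eorie du corps de classes local, alors que le texte le d\'eduit d'un lemme global (pour $(n,q)=1$, $\chi(n)=1$ \'equivaut \`a $n\equiv P(y,z,t)\bmod q$, cons\'equence de la principalit\'e), lequel identifie l'image locale \`a la composante $G_p$ de $G_1$, d'indice $r_p$ avec $\prod_{p\mid q}r_p=3$.

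Cette diff\'erence cr\'ee un v\'eritable trou. Votre calcul donne $M_p=3p^{2m}$ pour chaque $p\mid q$, donc $\prod_{p\mid q}M_p=3^{\omega(q)}q^{2n}$ et non $3q^{2n}$ ; vous le reconnaissez en vous restreignant aux conducteurs \`a un seul facteur premier, mais sans fermer la br\`eche. De m\^eme, l'appel au th\'eor\`eme de Hasse est \`a contresens : il affirme qu'un \'el\'ement qui est norme locale en toute place est une norme globale, alors qu'il vous faut l'implication inverse, \`a savoir que $\chi(A_1)=1$ entra\^{\i}ne que $A_1$ est une norme locale en chaque $p\mid q$ ; celle-ci est fausse d\`es que $\omega(q)\geqslant 2$, puisque seul le produit des caract\`eres locaux vaut $1$. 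Les deux lacunes ont la m\^eme origine et se r\'esorbent en montrant que la principalit\'e de $O_{\mathbb{K}}$ force $\omega(q)=1$ : soit par la th\'eorie des genres (le corps des genres est une extension ab\'elienne non ramifi\'ee de $\mathbb{K}$ de degr\'e $3^{\omega(q)-1}$, donc $3^{\omega(q)-1}$ divise $h_{\mathbb{K}}=1$), soit en confrontant votre indice local $3$ en chaque $p$ au fait que l'image globale de la norme modulo $q$ est d'indice exactement $3$, ce qui est pr\'ecis\'ement le contenu du lemme du texte. Une fois ce point acquis, votre argument aboutit ; notez enfin que le texte utilise la principalit\'e une seconde fois, pour disposer d'un g\'en\'erateur $\beta_p$ de l'unique id\'eal premier au-dessus de $p$ dans la r\'ecurrence sur $v_p(A)$, r\^ole que votre uniformisante locale $\pi$ remplit sans hypoth\`ese de principalit\'e.
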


Pour démontrer cette proposition, nous aurons besoin du lemme suivant.

\begin{lemme}
Soit $n$ un entier premier à $q$. Les assertions suivantes sont équivalentes.
\begin{itemize}
\item $\chi(n)=1$,
\item Il existe $(y,z,t) \in \mathbb{Z}^3$ tels que 
$$n\equiv P(y,z,t) \bmod q{\rm .}$$
\end{itemize}
\end{lemme}

\begin{proof}
Ce lemme se démontre facilement en considérant un nombre premier congru à $n$ modulo $q$ et en notant que la fonction $1*\chi*\chi^2(n)$ compte le nombre d'idéaux de norme $n$. L'anneau $O_{\mathbb{K}}$ étant principal, cela fournit le résultat.
\end{proof}

La seconde assertion est une propriété locale, il suffit de la vérifier pour chaque nombre premier $p$ divisant $q$. Ce lemme nous permet donc d'affirmer que $G_1$ s'écrit de la façon suivante.
$$G_1=\prod\limits_{p\mid q}{G_p}{\rm ,}$$
où $G_p$ est un sous groupe de $(\mathbb{Z}/p^{v_p(q)}\mathbb{Z})^{\times}$. Nous notons $r_p$ son indice. Nous avons dans ce cas
$$\prod\limits_{p\mid q}{r_p}=3{\rm .}$$\\

La proposition \ref{prop 1} peut donc être déduite de la proposition suivante.

\begin{prop}
Soit $p$ un nombre premier divisant $q$. Soient $n\geqslant 1$ et $A\geqslant 1$ tels que
$n\geqslant v_p(q)+v_p(A)$ et tels que la partie de $A$ qui est première avec $p$ appartienne à $G_p$. Sous l'hypothèse que $O_{\mathbb{K}}$ est principal, nous avons
$$\Big|\Big\{(y,z,t) \in (\mathbb{Z}/p^n\mathbb{Z})^3\mbox{ : } P(y,z,t)=A\Big\}\Big|=r_pp^{2n}{\rm .}$$
\end{prop}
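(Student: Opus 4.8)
Le plan est de travailler localement en $p$ et de traduire le comptage en une question portant sur l'application norme dans le complété. Comme $p \mid q$ et que $\mathbb{K}/\mathbb{Q}$ est cyclique de degré $3$, le premier $p$ est totalement ramifié : $pO_{\mathbb{K}} = \mathfrak{p}^3$ avec degré résiduel $f = 1$. La $\mathbb{Z}$-base $(\omega_1,\omega_2,\omega_3)$ identifie $(\mathbb{Z}/p^n\mathbb{Z})^3$ à $O_{\mathbb{K}}/p^nO_{\mathbb{K}} \cong O_{\mathbb{K}_{\mathfrak{p}}}/\mathfrak{p}^{3n}$ et $P(y,z,t)$ à $N_{\mathbb{K}/\mathbb{Q}}(\alpha)$ pour $\alpha$ l'élément correspondant ; il s'agit donc de compter les $\alpha \in O_{\mathbb{K}_{\mathfrak{p}}}/\mathfrak{p}^{3n}$ tels que $N(\alpha) \equiv A \pmod{p^n}$. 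Les outils de base sont l'égalité $v_p(N(\alpha)) = v_{\mathfrak{p}}(\alpha)$ (car $f = 1$) et le fait que $N$ induit un homomorphisme de groupes sur les unités.

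Première réduction : fixer la valuation. En posant $a := v_p(A)$, l'hypothèse $n \geq v_p(q) + a$ entraîne $n > a$, de sorte que toute solution vérifie $v_p(N(\alpha)) = a$, donc $v_{\mathfrak{p}}(\alpha) = a$. Comme $O_{\mathbb{K}}$ est principal, nous fixerions un générateur $\pi$ de $\mathfrak{p}$, pour lequel $N(\pi) = \pm p$. Toute solution s'écrirait alors de manière unique $\alpha = \pi^a \beta$ avec $\beta \in O_{\mathbb{K}_{\mathfrak{p}}}^\times$ déterminé modulo $\mathfrak{p}^{3n - a}$ ; en écrivant $A = p^a A'$ avec $A'$ la partie première à $p$, la congruence $N(\alpha) \equiv A$ deviendrait $N(\beta) \equiv A'' \pmod{p^m}$, où $m := n - a \geq v_p(q)$ et $A'' = (\pm 1)^a A'$. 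Puisque $\chi(-1) = 1$ et $A' \in G_p$ par hypothèse, on a $A'' \in G_p$.

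Seconde étape : dénombrer les unités. La norme réduite induit un homomorphisme $N : (O_{\mathbb{K}_{\mathfrak{p}}}/\mathfrak{p}^{3n - a})^\times \to (\mathbb{Z}/p^m\mathbb{Z})^\times$, bien défini grâce à l'hypothèse $n \geq v_p(q) + a$. Comme $p^{v_p(q)}$ est exactement le conducteur local, la théorie du corps de classes local assure que le groupe des normes contient $1 + p^{v_p(q)}\mathbb{Z}_p$, donc pour $m \geq v_p(q)$ l'image de $N$ modulo $p^m$ est précisément l'image réciproque de $G_p$, d'indice $r_p$ ; de plus $A'' \in G_p$ y appartient. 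L'application $N$ étant un homomorphisme de groupes abéliens finis, toutes ses fibres non vides ont le même cardinal $|(O_{\mathbb{K}_{\mathfrak{p}}}/\mathfrak{p}^{3n-a})^\times| / |\mathrm{Im}\,N| = r_p \cdot p^{(3n - a) - m} = r_p\, p^{2n}$, ce qui est la valeur annoncée.

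Le point délicat, que nous traiterions avec soin, est la gestion des précisions : il faut vérifier que $N(\alpha) \bmod p^n$ (resp. $N(\beta) \bmod p^m$) ne dépend que de $\alpha \bmod \mathfrak{p}^{3n}$ (resp. de $\beta$ à la précision indiquée). Cela repose sur une estimation de $v_p$ de la trace et de la norme des éléments de $\mathfrak{p}^k$, contrôlée par la différente $\mathfrak{d}_{\mathbb{K}_{\mathfrak{p}}/\mathbb{Q}_p}$ : dans le cas modéré ($p \neq 3$) la différente vaut $\mathfrak{p}^2$ et les bornes sont immédiates, tandis que pour le premier sauvagement ramifié $p = 3$ la différente est plus grande et c'est précisément la marge fournie par l'hypothèse $n \geq v_p(q) + v_p(A)$ (avec $v_3(q) = 2$) qui rend les réductions valides. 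L'autre ingrédient à isoler est la stabilisation de l'indice de $\mathrm{Im}\,N$ à la valeur $r_p$ pour tout $m \geq v_p(q)$, c'est-à-dire l'identification des résidus de normes locales avec $G_p = \mathrm{Ker}\,\chi_p$ au niveau du conducteur.
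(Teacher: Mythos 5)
Your argument is correct, and its skeleton coincides with the paper's: both reduce to the case $v_p(A)=0$ by extracting a power of a generator of the unique prime $\mathfrak{B}_p$ above $p$ (this is exactly where principality of $O_{\mathbb{K}}$ is used), then count the fibre over $A$ of the norm homomorphism on units as a coset of its kernel, the whole point being that the image of that homomorphism is the index-$r_p$ pullback of $G_p$. The genuine difference is in how that last fact is justified. You invoke local class field theory: the local conductor divides $p^{v_p(q)}$, hence $N(O_{\mathbb{K}_{\mathfrak{p}}}^{\times})\supseteq 1+p^{v_p(q)}\mathbb{Z}_p$ and the image of the norm stabilises at the level of the conductor with index $r_p$. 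The paper instead stays in the finite quotients $E(p,n)\subset(\mathbb{Z}/p^n\mathbb{Z})^3$ and deduces the image from its preceding lemma (the identification of $\ker\chi$ with the residues of $P$ modulo $q$, proved via Dirichlet's theorem and the interpretation of $1*\chi*\chi^2$ as an ideal count); it then treats $v_p(A)>0$ by induction on the valuation through a commutative diagram rather than by your one-shot factorisation $\alpha=\pi^a\beta$. Your route makes explicit a conductor input that the paper leaves implicit (its lemma only controls the image modulo $q$, not modulo $p^n$ for large $n$, so some stabilisation statement is needed in either treatment), at the cost of importing class field theory where the paper argues more elementarily. One small correction: the worry about the different is a red herring. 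The reduction $N(\beta)\bmod p^m$ is well defined on classes modulo $\mathfrak{p}^{3m}=p^mO_{\mathbb{K}_{\mathfrak{p}}}$ simply because $P$ is a polynomial with integer coefficients, so no trace or different estimates are required, including for the wildly ramified prime $p=3$.
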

\begin{proof}
Notons $E(p,n)$ l'ensemble 
$$E(p,n)=\{(y,z,t) \in (\mathbb{Z}/p^n\mathbb{Z})^3\mbox{ : } p\nmid P(y,z,t) \}{\rm .}$$

La condition $p\nmid P(y,z,t)$ ne dépend que de la congruence de $y,z$ et $t$ modulo $p$. Par ailleurs, comme $(1*\chi*\chi^2)(p)=1$, il n'existe qu'un idéal $\mathfrak{B}_p$ de norme $p$. Cet idéal est engendré par un élément $\beta_p$. La condition $p\nmid P(y,z,t)$ est donc équivalente à la condition $\beta_p \nmid y\omega_1+z\omega_2+t\omega_3$ dans $O_{\mathbb{K}}$. De plus, le choix d'une base nous permet d'expliciter l'isomorphisme.

\begin{align*}
\psi &:&& (\mathbb{Z}/p\mathbb{Z})^3 && \longrightarrow &&O_{\mathbb{K}}/pO_{\mathbb{K}} \\
&&& (y,z,t) && \mapsto && y\omega_1+z\omega_2+t\omega_3
\end{align*}
En notant $\pi$ la surjection canonique
$$\pi: O_{\mathbb{K}}/pO_{\mathbb{K}}\longrightarrow O_{\mathbb{K}}/\mathfrak{B}_p{\rm ,}$$ 
l'application $\pi\circ \psi$ est un morphisme de groupes surjectif et puisque 
$$O_{\mathbb{K}}/\mathfrak{B}_p \approx \mathbb{Z}/p\mathbb{Z}{\rm ,}$$ 
nous avons
$$|E(p,1)|=p^3-p^2{\rm .}$$
donc 
$$|E(p,n)|=p^{3n-1}(p-1){\rm .}$$

Il est par ailleurs possible de munir $E(p,n)$ d'une structure de groupe multiplicatif via le produit usuel de $O_{\mathbb{K}}$. Cette structure de groupe fait de l'application suivante
\begin{align*}
\phi &:&& E(p,n) && \mapsto &&(\mathbb{Z}/p^n\mathbb{Z})^{\times} \\
&&& (y,z,t) && \mapsto && P(y,z,t)
\end{align*}
un morphisme de groupes dont l'image est l'ensemble 
des éléments dont leur réduction modulo $p^{v_p(q)}$ appartient à $G_p$. Cet ensemble est de cardinal $p^{n-v_p(q)}p^{v_p(q)-1}(p-1)/r_p$. Le noyau de ce morphisme est donc de cardinal $r_pp^{2n}$, ce qui démontre le résultat dans le cas où $A$ est premier avec $p$.

Nous déduisons le cas général en raisonnant par récurrence sur la valuation p-adique de $A$, au moyen du diagramme commutatif suivant et des isomorphismes entre les groupes $ (\mathbb{Z}/p^{k}\mathbb{Z})^3$ et $O_{\mathbb{K}}/p^{k}O_{\mathbb{K}}$.

$$\xymatrix{
    O_{\mathbb{K}}/\mathfrak{B}_p^{3n-1}\ar[r]^{\pi_n} \ar[d]_{.\beta_p} &O_{\mathbb{K}}/p^{n-1}O_{\mathbb{K}} \ar[r]^{N_{\mathbb{K}/\mathbb{Q}}}  & (\mathbb{Z}/p^{n-1}\mathbb{Z}) \ar[d]^{.p} \\
    O_{\mathbb{K}}/p^{n}O_{\mathbb{K}}  \ar[rr]_{N_{\mathbb{K}/\mathbb{Q}}}&& (\mathbb{Z}/p^n\mathbb{Z})
  }
  $$
  où $.\beta_p$ désigne la multiplication  par $\beta_p$ dans $O_{\mathbb{K}}$, tandis que $\pi_n$ désigne la surjection canonique de $O_{\mathbb{K}}/\mathfrak{B}_p^{3n-1}$ vers $O_{\mathbb{K}}/p^{n-1}O_{\mathbb{K}}$. Les applications $.\beta_p$ et $.p$ sont injectives tandis que le noyau de $\pi_n$ est de cardinal $p^2$. Enfin, l'hypothèse $n\geqslant v_p(A)+v_p(q)$ montre que si $v_p(A)\geqslant 1$, alors l'unique antécédent de $A$ par la multiplication par $p$ est également bien défini modulo $p^{v_p(q)}$, ce qui nous permet d'appliquer la récurrence.
   \end{proof}
La proposition \ref{prop 1} permet alors d'écrire 
  \begin{align*}
   K_q(F)&=\lim\limits_{k\rightarrow \infty}&&\frac{3}{q^{2k}}\Big|\Big\{\bfx \in (\mathbb{Z}/q^k\mathbb{Z})^2\mbox{ : } F(\bfx) \in \mathcal{E}_{q^k}\Big\}\Big|\\
   &=\lim\limits_{k\rightarrow \infty}&&\frac{3}{q^{2k}}\Big(\sum\limits_{\substack{\bfx \in (\mathbb{Z}/q^k\mathbb{Z})^2\\ F(\bfx)\in \mathcal{E}_{q^k} \\ \forall p\mid q\;\; v_p(F(\bfx))\leqslant k-1}}{1}+\sum\limits_{\substack{\bfx \in (\mathbb{Z}/q^k\mathbb{Z})^2\\ F(\bfx)\in \mathcal{E}_{q^k} \\ \exists p\mid q\;\; v_p(F(\bfx))> k-1}}{1}\Big)\\
   &=\lim\limits_{k\rightarrow \infty}&&\frac{3}{q^{2k}}\sum\limits_{\substack{\bfx \in (\mathbb{Z}/q^k\mathbb{Z})^2\\ F(\bfx)\in \mathcal{E}_{q^k} \\ \forall p\mid q\;\; v_p(F(\bfx))\leqslant k-1}}{\!\!\!\frac{|\{(y,z,t)\in (\mathbb{Z}/q^k\mathbb{Z})^3\mbox{ : }P(y,z,t)\equiv F(\bfx)\bmod q^k\}|}{3q^{2k}}}\\
   &&&+O\Big(\sum\limits_{p\mid q}{\frac{\varrho_F^+(p^{k})}{p^{2k}}}\Big)\\
   &=\lim\limits_{k\rightarrow \infty}&&\frac{1}{q^{4k}}\Big|\Big\{(\bfx,y,z,t) \in (\mathbb{Z}/q^k\mathbb{Z})^5\mbox{ : } F(\bfx)\equiv P(y,z,t) \bmod q^k\Big\}\Big|{\rm .}
   \end{align*}
 Ainsi,
  $$K_q(F)=\prod\limits_{p\mid q}{K_p(F)}$$
  où
  $$K_p(F)=\lim\limits_{k\rightarrow \infty}\frac{1}{p^{4k}}\Big|\Big\{(\bfx,y,z,t) \in (\mathbb{Z}/p^k\mathbb{Z})^5\mbox{ : } F(\bfx)\equiv P(y,z,t)\bmod p^k\Big\}\Big|{\rm .}$$
  
\subsection{Interprétation géométrique de $K_p(F)$ lorsque $p$ ne divise pas $q$}

Rappelons la définition de $K_p(F)$ lorsque $p$ ne divise pas $q$.
\begin{definition}
Nous avons, pour tout $p\nmid q$
\begin{equation}
K_p(F):=\Big(1-\frac{\chi(p)}{p}\Big)\Big(1-\frac{\chi^2(p)}{p}\Big)\sum\limits_{k\geqslant 0}{\frac{\varrho_F^+(p^k)}{p^{2k}}(\chi*\chi^2)(p^k)}
\end{equation}
où
$$\varrho_F^+(p^k):=\Big|\Big\{\bfx \in (\mathbb{Z}/p^k\mathbb{Z})^2\mbox{ : } F(\bfx)\equiv 0\bmod p^k\Big\}\Big|{\rm .}$$
\end{definition}
\begin{definition}
Pour tout $p$ premier, nous notons
$$K_{pg}(F):=\lim\limits_{k\rightarrow \infty}\frac{1}{p^{4k}}\Big|\Big\{(\bfx,y,z,t) \in (\mathbb{Z}/p^k\mathbb{Z})^5\mbox{ : } F(\bfx)\equiv P(y,z,t)\bmod p^k\Big\}\Big|{\rm .}$$
\end{definition}
Notre objectif dans cette section est de démontrer la proposition suivante
\begin{prop}
Pour tout $p\nmid q$, nous avons
\begin{equation}
\label{K_pg}
K_p(F)= K_{pg}(F) {\rm .}
\end{equation}
\end{prop}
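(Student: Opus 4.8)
On doit montrer que, pour $p \nmid q$, la constante arithmétique $K_p(F)$ — définie analytiquement via la série en $\varrho_F^+(p^k)$ et les caractères — coïncide avec sa version « géométrique » $K_{pg}(F)$ comptant les solutions locales de $F(\bfx) \equiv P(y,z,t) \bmod p^k$. Je dois transformer une identité analytique en identité de densités de points.

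Regardons la structure. $K_p$ fait intervenir $\chi$, $\chi^2$, et $\varrho_F^+$. $K_{pg}$ compte des solutions de $F(\bfx) = P(y,z,t)$ où $P$ est la forme norme. La clé est que $P$ est une forme norme : le nombre de représentations de $m$ par $P$ modulo $p^k$ est relié à $r_3(m) = (1*\chi*\chi^2)(m)$.

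**Plan.**

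Je pense procéder comme suit.

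D'abord, je fixe $p \nmid q$ et je décompose le comptage de $K_{pg}(F)$ selon la valeur de $m := F(\bfx) \bmod p^k$.

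Ensuite, le point central : pour $p \nmid q$, le nombre de triplets $(y,z,t) \in (\mathbb{Z}/p^k\mathbb{Z})^3$ tels que $P(y,z,t) \equiv m \bmod p^k$ doit être exprimé en fonction de $r_3$. Comme $p \nmid q$, la réduction de $\chi$ modulo $p$ est bien définie et contrôle la décomposition de $p$ dans $O_{\mathbb{K}}$. Je voudrais établir que cette densité locale de représentation par la forme norme $P$ vaut, à la limite, une fonction multiplicative de $v_p(m)$ donnée exactement par $(1*\chi*\chi^2)(p^{v_p(m)})$ (à la normalisation près par le facteur local de $\zeta_{\mathbb{K}}$). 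C'est l'analogue local de l'identité globale « $r_3$ compte les idéaux de norme donnée ».

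Puis je réécris $K_{pg}(F) = \lim_k p^{-4k} \sum_{\bfx} N_k(F(\bfx))$ où $N_k(m)$ est le nombre de représentations de $m$ par $P$ modulo $p^k$. En regroupant les $\bfx$ selon $\nu := v_p(F(\bfx))$ (coupés à $\nu \leqslant k-1$, le reste étant un terme d'erreur $O(\varrho_F^+(p^k)/p^{2k})$ comme dans la section 5.1), et en insérant l'expression de $N_k$ en termes de $(\chi*\chi^2)(p^\nu)$ et du facteur local $(1-\chi(p)/p)^{-1}(1-\chi^2(p)/p)^{-1}$ de $L_{\mathbb{K}}$, on voit apparaître exactement la somme $\sum_{\nu \geqslant 0} \varrho_F^+(p^\nu)p^{-2\nu}(\chi*\chi^2)(p^\nu)$ pondérée par les facteurs d'Euler, c'est-à-dire $K_p(F)$. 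La convergence absolue de la série (Proposition du §2.1, $h_F^+$) justifie le passage à la limite en $k$.

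**Obstacle principal.** Le cœur de la difficulté est l'étape locale : établir précisément que la densité de représentation par la forme norme $P$ au-dessus de $p$ reproduit $(1*\chi*\chi^2)(p^\nu)$, avec le bon facteur de normalisation archimédien local $p$-adique. Il faut relier $|\{(y,z,t) : P \equiv m \bmod p^k\}|$ au nombre d'idéaux de norme $p^\nu$ dans $O_{\mathbb{K}}$, ce qui nécessite de contrôler la correspondance entre éléments et idéaux (le quotient par les unités, la principalité n'étant pas supposée ici puisque $p\nmid q$ — mais on travaille localement, donc $O_{\mathbb{K}} \otimes \mathbb{Z}_p$ est toujours un produit d'anneaux locaux principaux, ce qui sauve l'argument). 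Une fois cette identité locale en main, le reste est un recollement formel des séries et un contrôle de terme d'erreur analogue à ceux déjà traités. Je m'attends donc à ce que la rédaction soigneuse de l'étape locale — la comparaison point par point entre le comptage de représentations de la forme norme et le facteur eulérien de $L(s,\chi)L(s,\chi^2)$ — soit la partie la plus délicate, le reste découlant des estimations et identités déjà établies aux sections 2 et 5.1.
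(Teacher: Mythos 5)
Your plan is exactly the paper's strategy: group the $\bfx$ by $\nu=v_p(F(\bfx))$, evaluate the local representation number $S(A,p^k):=|\{(y,z,t):P(y,z,t)\equiv A\bmod p^k\}|$ as $p^{2k}\bigl(1-\chi(p)/p\bigr)\bigl(1-\chi^2(p)/p\bigr)(1*\chi*\chi^2)(p^{v_p(A)})$, and let the resulting telescoping sum reproduce $K_p(F)$. The difficulty you flag as the \emph{obstacle principal} is indeed the whole content of the proposition, and that is precisely where your proposal stops: you assert that the local density of representations by the norm form must reproduce $(1*\chi*\chi^2)(p^{\nu})$ up to the local Euler factor, invoke the principality of $O_{\mathbb{K}}\otimes\mathbb{Z}_p$ as a reason to believe it, but do not prove it. The paper's proof consists almost entirely of this computation, carried out by a case split on the splitting type of $p$ dictated by $\chi(p)$: when $\chi(p)\neq 1$ the prime $p$ is inert and one shows $S(A,p^k)=p^{2k}(1+1/p+1/p^2)$ if $3\mid v_p(A)$ and $0$ otherwise (for $v_p(A)<k$); when $\chi(p)=1$ the prime splits completely and $S(A,p^k)=p^{2k}(1-1/p)^2\binom{v_p(A)+2}{2}$. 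Each case requires (a) the surjectivity of the induced norm map $(O_{\mathbb{K}}/p^kO_{\mathbb{K}})^{\times}\to(\mathbb{Z}/p^k\mathbb{Z})^{\times}$ together with the exact cardinality of its fibres --- this is where the factors $1+1/p+1/p^2$ versus $(1-1/p)^2$ come from, and they differ between the two cases; (b) an induction on $v_p(A)$ to pass from units to general $A$, using the factorisation of $y\omega_1+z\omega_2+t\omega_3$ by the prime elements above $p$ and a count of the admissible exponent triples ($1$ or $0$ in the inert case according to $\nu\bmod 3$, $\binom{\nu+2}{2}$ in the split case --- exactly the values of $(\chi*\chi^2)(p^{\nu})$); and (c) a separate evaluation of the boundary contribution $v_p(A)\geqslant k$, which you dismiss as an error term without estimating it.

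None of these steps is automatic from the general remark that $O_{\mathbb{K}}\otimes\mathbb{Z}_p$ is a product of discrete valuation rings; in particular the fibre cardinalities of the norm map and the exponent-counting combinatorics are what make the local factor come out equal to $\bigl(1-\chi(p)/p\bigr)\bigl(1-\chi^2(p)/p\bigr)(\chi*\chi^2)(p^{\nu})$ rather than to some other multiplicative expression. As written, the proposal is a correct reduction of the proposition to a lemma that it does not establish, so the argument is incomplete at its decisive step.
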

Notons également pour la suite, pour $A$ et $k$ deux entiers, et $p$ un nombre premier
$$S(A,p^k):=\Big|\Big\{(y,z,t) \in (\mathbb{Z}/p^k\mathbb{Z})^3\mbox{ : } P(y,z,t)\equiv A \bmod p^k\Big\}\Big|$$
où $P$ est défini dans la section précédente à partir d'une $\mathbb{Z}$-base de $O_{\mathbb{K}}$. La démonstration de la proposition ci dessus passe par un calcul explicite de $S(A,p^k)$, analogue aux calculs menés dans \cite{BB1}, \cite{BB2} et \cite{HB}.
\subsubsection{Cas $\chi(p)\neq 1$}
Dans ce cas, $pO_{\mathbb{K}}$ est un idéal premier de $O_{\mathbb{K}}$, nous pouvons alors énoncer le lemme suivant.
\begin{lemme}
Soient $A$ et $k$ deux entiers. Si $\chi(p)\neq 1$, nous avons
$$
S(A,p^k) = \left\{
    \begin{array}{ll}
        p^{2k}\Big(1+\frac{1}{p}+\frac{1}{p^2}\Big) & \mbox{si } 3\mid v_p(A) \mbox{ et } v_p(A)<k \\
        0 & \mbox{si } 3\nmid v_p(A) \mbox{ et }  v_p(A)<k \\
        p^{3[2k/3]}& \mbox{si } v_p(A)\geqslant k
    \end{array}
\right.
$$
\end{lemme}

\begin{proof}
Nous utilisons de nouveau l'isomorphisme
$$ \psi : (\mathbb{Z}/p^k\mathbb{Z})^3\longrightarrow   O_{\mathbb{K}}/p^kO_{\mathbb{K}}{\rm .}$$
Si $v_p(A)=0$ et
$$N_{\mathbb{K}/\mathbb{Q}}(y\omega_1+z\omega_2+t\omega_3)\equiv A \bmod p^k$$
alors $y\omega_1+z\omega_2+t\omega_3 \in (O_{\mathbb{K}}/p^kO_{\mathbb{K}})^{\times}$. L'application $N_{\mathbb{K}/\mathbb{Q}}$ induit un morphisme surjectif de groupes multiplicatifs
$$N_{\mathbb{K}/\mathbb{Q}}:(O_{\mathbb{K}}/p^kO_{\mathbb{K}})^{\times}\longrightarrow (\mathbb{Z}/p^k\mathbb{Z})^{\times}$$
ce qui nous permet d'écrire
$$S(A,p^k)=p^{2k}\Big(1+\frac{1}{p}+\frac{1}{p^2}\Big)$$
si $v_p(A)=0$. Nous pouvons raisonner par récurrence sur $v_p(A)$ pour conclure dans le cas où $v_p(A)<k$. En fait, si $v_p(A)\geqslant 3$, et 
$$N_{\mathbb{K}/\mathbb{Q}}(y\omega_1+z\omega_2+t\omega_3)\equiv A \bmod p^k$$
alors $p$ divise $y$, $z$ et $t$. Ainsi
\begin{align*}
S(A,p^k)&=\Big|\Big\{(y',z',t') \in (\mathbb{Z}/p^{k-1}\mathbb{Z})^3\mbox{ : } P(y',z',t')\equiv \frac{A}{p^3} \bmod p^{k-3}\Big\}\Big|\\
&=p^6S(\frac{A}{p^3},p^{k-3})
\end{align*}
Le cas $v_p(A)\geqslant k$ se démontre par récurrence sur $k$ au moyen de l'égalité ci-dessus et des conditions initiales
$$S(0,1)=S(0,p)=1{\rm ,} \;\;\;\;\; S(0,p^2)=p^3{\rm .}$$
\end{proof}
Nous pouvons donc écrire 
\begin{align*}
K_{pg}(F)&=\lim\limits_{\nu\rightarrow \infty}\frac{1}{p^{2\nu}}\sum\limits_{k=0}^{\nu-1}{\sum\limits_{\substack{\bfx \in (\mathbb{Z}/p^k\mathbb{Z})^2 \\ v_p(F(\bfx))=k}}{\frac{1}{p^{2\nu}}S(F(\bfx),p^{\nu})}}+o(1) \\
&=\Big(1+\frac{1}{p}+\frac{1}{p^2}\Big)\lim\limits_{\nu\rightarrow \infty}\frac{1}{p^{2\nu}}\sum\limits_{\substack{k=0 \\ 3\mid k}}^{\nu-1}{\big(\varrho_F^+(p^k)p^{2(\nu-k)}-\varrho_F^+(p^{k+1})p^{2(\nu-k-1)}\big)}\\
&= \Big(1+\frac{1}{p}+\frac{1}{p^2}\Big)\sum\limits_{\substack{k\geqslant 0 \\ 3\mid k}}{\Big(\frac{\varrho_F^+(p^k)}{p^{2k}}-\frac{\varrho_F^+(p^{k+1})}{p^{2(k+1)}}\Big)}{\rm .}
\end{align*}

Par ailleurs, puisque $\chi(p)\neq 1$ et que $\chi$ est d'ordre $3$, il est facile de vérifier que
$$\Big(1-\frac{\chi(p)}{p}\Big)\Big(1-\frac{\chi^2(p)}{p}\Big)=1+\frac{1}{p}+\frac{1}{p^2}$$
et
$$
(\chi*\chi^2)(p^k) = \left\{
    \begin{array}{ll}
        1 & \mbox{si } k\equiv 0 \bmod 3 \\
        -1 & \mbox{si } k\equiv 1 \bmod 3 \\
        0& \mbox{si } k\equiv 2 \bmod 3
    \end{array}
\right.
$$
Ainsi, nous avons

$$K_{pg}(F)=\Big(1-\frac{\chi(p)}{p}\Big)\Big(1-\frac{\chi^2(p)}{p}\Big)\sum\limits_{k\geqslant 0}{\frac{\varrho_F^+(p^k)}{p^{2k}}(\chi*\chi^2)(p^k)}=K_p(F){\rm .}$$

\subsubsection{Cas $\chi(p)=1$}
Dans ce cas, $pO_{\mathbb{K}}$ se décompose en produit de trois idéaux premiers 
$$pO_{\mathbb{K}}=\mathfrak{B}_1\mathfrak{B}_2\mathfrak{B}_3{\rm .}$$. Nous pouvons alors énoncer le lemme suivant.
\begin{lemme}
Soient $A$ et $k$ deux entiers. Nous avons
$$
S(A,p^k) = \left\{
    \begin{array}{ll}
        p^{2k}\Big(1-\frac{1}{p}\Big)^2\binom{v_p(A)+2}{2} & \mbox{si }  v_p(A)<k \\
        p^{2k}\Big\{\frac{k(k+1)}{2}\Big(1-\frac{1}{p}\Big)^2+k\Big(1-\frac{1}{p}\Big)+1\Big\} & \mbox{si }   v_p(A)\geqslant k 
    \end{array}
\right.
$$
\end{lemme}

\begin{proof}
Il s'agit de compter les éléments de norme $A \bmod p^k$ dans~$O_{\mathbb{K}}/p^kO_{\mathbb{K}}$.
Nous considérons l'isomorphisme d'anneaux
$$O_{\mathbb{K}}/p^kO_{\mathbb{K}}\approx O_{\mathbb{K}}/\mathfrak{B}_1^k \times O_{\mathbb{K}}/\mathfrak{B}_2^k \times O_{\mathbb{K}}/\mathfrak{B}_3^k{\rm .}$$
Si $v_p(A)=0$, l'application $N_{\mathbb{K}/\mathbb{Q}}$ induit de nouveau un morphisme de groupes surjectif
$$N_{\mathbb{K}/\mathbb{Q}}:(O_{\mathbb{K}}/p^kO_{\mathbb{K}})^{\times}\longrightarrow (\mathbb{Z}/p^k\mathbb{Z})^{\times}{\rm ,}$$
mais lorsque $\chi(p)=1$, le cardinal de $(O_{\mathbb{K}}/p^kO_{\mathbb{K}})^{\times}$ est $p^{3k}(1-\frac{1}{p})^3$. Ce qui démontre l'égalité
$$S(A,p^k)=p^{2k}(1-\frac{1}{p})^2$$
si $v_p(A)=0$. Dans le cas où $v_p(A)<k$, la condition
$$ N_{\mathbb{K}/\mathbb{Q}}(y\omega_1+z\omega_2+t\omega_3)\equiv A \bmod p^k$$
implique qu'il existe des exposants $(i,j,h)$ tels que
$$i+j+h=v_p(A)$$
 et un élément $\bfu$ premier avec $p$ tels que 
$$y\omega_1+z\omega_2+t\omega_3=\beta_1^i\beta_2^j\beta_3^h \bfu$$
avec $$\mathfrak{B}_{\ell}=(\beta_{\ell}){\rm ,}\;\;\;\; \ell=1,2,3{\rm .}$$
Un tel $\bfu$ est défini de manière unique dans
$$O_{\mathbb{K}}/\mathfrak{B}_1^{k-i} \times O_{\mathbb{K}}/\mathfrak{B}_2^{k-j} \times O_{\mathbb{K}}/\mathfrak{B}_3^{k-h}{\rm .}$$
Une fois fixés les exposants $(i,j,h)$, il est facile de voir que le nombre d'éléments $\bfu \in O_{\mathbb{K}}/\mathfrak{B}_1^{k-i} \times O_{\mathbb{K}}/\mathfrak{B}_2^{k-j} \times O_{\mathbb{K}}/\mathfrak{B}_3^{k-h}$ tels que 
$$N_{\mathbb{K}/\mathbb{Q}}(\bfu)\equiv \frac{A}{p^{v_p(A)}}\bmod p^{k-v_p(A)}$$
 est de cardinal $p^{2k}(1-\frac{1}{p})^2$. Il existe enfin $\binom{v_p(A)+2}{2}$ choix possibles pour les exposants $(i,j,h)$.
 Enfin, dans le cas où $v_p(A)\geqslant k$, l'ensemble des solutions correspond à un choix d'un triplet $(i,j,h)$ tels que $i+j+h\geqslant k$ et d'un élément $\bfu \in O_{\mathbb{K}}/\mathfrak{B}_1^{k-i} \times O_{\mathbb{K}}/\mathfrak{B}_2^{k-j} \times O_{\mathbb{K}}/\mathfrak{B}_3^{k-h}$ premier avec $p$.
\end{proof}

Nous pouvons donc écrire 
\begin{align*}
K_{pg}(F)&=\lim\limits_{\nu\rightarrow \infty}\frac{1}{p^{2\nu}}\sum\limits_{k=0}^{\nu-1}{\sum\limits_{\substack{\bfx \in (\mathbb{Z}/p^k\mathbb{Z})^2 \\ v_p(F(\bfx))=k}}{\frac{1}{p^{2\nu}}S(F(\bfx),p^{\nu})}}+o(1) \\
&=\Big(1-\frac{1}{p}\Big)^2\lim\limits_{\nu\rightarrow \infty}\frac{1}{p^{2\nu}}\sum\limits_{k=0}^{\nu-1}{\Big(\varrho_F^+(p^k)p^{2(\nu-k)}-\varrho_F^+(p^{k+1})p^{2(\nu-k-1)}\Big)\binom{k+2}{2}}\\
&= \Big(1-\frac{1}{p}\Big)^2\sum\limits_{k\geqslant 0 }{\frac{\varrho_F^+(p^k)}{p^{2k}}(k+1)}{\rm .}
\end{align*}

Par ailleurs, puisque $\chi(p)= 1$, il est facile de vérifier que
$$\Big(1-\frac{\chi(p)}{p}\Big)\Big(1-\frac{\chi^2(p)}{p}\Big)=\Big(1-\frac{1}{p}\Big)^2$$
et
$$
(\chi*\chi^2)(p^k) = k+1{\rm .}
$$
Ainsi, nous avons
$$K_{pg}(F)=\Big(1-\frac{\chi(p)}{p}\Big)\Big(1-\frac{\chi^2(p)}{p}\Big)\sum\limits_{k\geqslant 0}{\frac{\varrho_F^+(p^k)}{p^{2k}}(\chi*\chi^2)(p^k)}=K_p(F){\rm .}$$
\section*{Remerciements}

Je tiens particulièrement à remercier Marc Hindry, Kévin Destagnol, ainsi que mon directeur de thèse Régis de la Bretèche pour toutes les discussions que nous avons eues, ainsi que leurs conseils avisés qui m'ont permis d'écrire cet article.
\nocite{*}
\bibliographystyle{amsplain}


\bibliography{bibliofano}
\addcontentsline{toc}{section}{Références}

\end{document}